\documentclass[english]{article}
\usepackage[T1]{fontenc}
\usepackage[utf8]{inputenc}
\usepackage{babel}
\usepackage{bbold}
\usepackage{mathtools}
\usepackage{amsthm}
\usepackage{stmaryrd}
\usepackage{amsmath}
\usepackage{enumitem}
\usepackage{pifont}
\RequirePackage[hyperref,backend=bibtex, 
style=numeric,maxnames=99
]{biblatex}
\addbibresource{mabiblio2.bib}

\usepackage{multicol}
\usepackage[mathscr]{euscript}
\usepackage{amssymb}
\usepackage{multicol}
\usepackage[a4paper,left=3.5cm,right=3.5cm,top=3cm,bottom=3cm]{geometry}
\usepackage{tcolorbox}
\usepackage{dsfont}
\usepackage{fancyheadings}
\pagestyle{fancy}
\newtheorem{theorem}{Theorem}[section]
\newtheorem{crl}[theorem]{Corollary}
\newtheorem{prop}[theorem]{Proposition}
\newtheorem{lm}[theorem]{Lemma}
\newtheorem{defi}[theorem]{Definition}
\newtheorem{rmq}[theorem]{Remark}
\newtheorem{ex}[theorem]{Example}
\newtheorem{exs}[theorem]{Examples}
\newtheorem*{nota}{Notation}
\newtheorem*{notas}{Notations}
\def\restriction#1#2{\mathchoice
	{\setbox1\hbox{${\displaystyle #1}_{\scriptstyle #2}$}
		\restrictionaux{#1}{#2}}
	{\setbox1\hbox{${\textstyle #1}_{\scriptstyle #2}$}
		\restrictionaux{#1}{#2}}
	{\setbox1\hbox{${\scriptstyle #1}_{\scriptscriptstyle #2}$}
		\restrictionaux{#1}{#2}}
	{\setbox1\hbox{${\scriptscriptstyle #1}_{\scriptscriptstyle #2}$}
		\restrictionaux{#1}{#2}}}
\def\restrictionaux#1#2{{#1\,\smash{\vrule height .8\ht1 depth .85\dp1}}_{\,#2}}

\newcommand{\rr}{\mathbb{R}}

\newcommand{\spn}{\text{Span}}
\newcommand{\nn}{\mathbb{N}}
\newcommand{\zz}{\mathbb{Z}}
\newcommand{\dt}{\mathrm{d}t}

\newcommand{\ds}{\mathrm{d}s}
\newcommand{\ph}{\varphi}
\newcommand{\ep}{\varepsilon}
\newcommand{\B}{\mathcal{B}}
\newcommand{\ev}{\textsc{e}}
\newcommand{\dd}{\,\mathrm{d}}
\DeclareMathOperator{\Br}{Br}
\DeclareMathOperator{\ad}{ad}
\lhead{}
\lhead{}
\chead{\textbf{\textsc{Quadratic obstructions to stlc for multi-input systems}}}
\rhead{}
\numberwithin{equation}{section}
\title{Quadratic obstructions to Small-Time Local Controllability for multi-input systems}
\author{Théo Gherdaoui\thanks{Univ Rennes, CNRS, IRMAR - UMR 6625, F-35000 Rennes, France}}
\usepackage[pdfborder={0 0 0}]{hyperref}
	
\setlength\parindent{24pt}
\begin{document}
	\maketitle
	\begin{abstract}
		We present a necessary condition for the small-time local controllability of multi-input control-affine systems on $\rr^d$. This condition is formulated on the vectors of $\rr^d$  resulting from the evaluation at zero of the Lie brackets of the vector fields: it involves both their direction and their amplitude.
		
		The proof is an adaptation to the multi-input case of a general method introduced by Beauchard and Marbach in the single-input case -- see \cite{beauchard2024unified}. It relies on a Magnus-type representation formula: the state is approximated by a linear combination of the evaluation at zero of the Lie brackets of the vector fields, whose coefficients are functionals of the time and the controls. Finally, obstructions to small-time local controllability result from interpolation inequalities.\par
		\vspace{0.25 cm}
		\noindent\textbf{Keywords:} Control theory, ODEs, Obstruction for controllability, Lie Brackets, Small-time local exact controllability, control-affine systems.
	\end{abstract}
	\tableofcontents
	\section{Introduction}
	\subsection{Definitions}
	We consider the control-affine system
	\begin{equation}\label{affine-syst}\left\lbrace\begin{array}{rcl}x'(t)&=&f_0(x(t))+u(t)f_1(x(t))+v(t)f_2(x(t))\\x(0)&=&0\end{array}\right.,
	\end{equation}where the state is $x(t)\in\rr^d$, the controls are scalar functions $u(t),v(t)\in\rr$ and $f_0,f_1$ and $f_2$ are real-analytic vector fields on a neighborhood of $0$ in $\rr^d$ with $f_0(0)=0$. The last hypothesis ensures that $0$ is an equilibrium of the free system (\textit{i.e.}\ with $(u,v)\equiv 0)$. 
	
	For each $t>0$, $u,v\in L^1((0,t),\rr)$, there exists a unique maximal mild solution to \eqref{affine-syst}, which we will denote by $x(\cdot;(u,v))$. As we are interested in small time and small controls, the solution is well-defined up to time $t$.
	
	In this article, we study the small-time local controllability of system \eqref{affine-syst} in the sense of Definition \ref{stlc} below, which requires the following notion. For $t > 0$, $m\in\nn$ and $p\in[1,+\infty]$, we consider the usual Sobolev space $W^{m,p}((0,t),\rr)$ equipped with the
	usual norm $$\left\|u\right\|_{W^{m,p}}:=\left\|u\right\|_{L^p}+\cdots+\|u^{(m)}\|_{L^p}.$$ 
	The following concept was introduced by Beauchard and Marbach in \cite{beauchard2017quadratic} for scalar-input systems.
	\begin{defi}[$W^{m,p}\times W^{m',p'}$-STLC]
		\label{stlc} Let $m,m'\in\nn$, $p,p'\in[1,+\infty]$. We say that system \eqref{affine-syst} is $W^{m,p}\times W^{m',p'}$-STLC when, for every $t,\rho>0$, there exists $\delta>0$, such that, for every $x^*\in B(0,\delta)$, there exist $(u,v)\in W^{m,p}((0,t),\rr)\times W^{m',p'}((0,t),\rr)$ with $\left\|(u,v)\right\|_{W^{m,p}\times W^{m',p'}}\leq \rho$ and $x(t;(u,v))=x^*$.
		
		\noindent \\We say that system \eqref{affine-syst} is $W^{m,p}$-STLC when \eqref{affine-syst} is $W^{m,p}\times W^{m,p}$-STLC.
	\end{defi}
	\begin{rmq}
		The historical notion of STLC corresponds to $m=0$, $p=\infty$ -- see \cite{coronbook,sussmann,stefani}.
	\end{rmq}
	We are looking for necessary conditions for STLC formulated in terms of Lie brackets; the definition is recalled below. Indeed, Krener proved in \cite[Theorem $1$]{krener} that all the information concerning the STLC is contained in the evaluation at 0 of the Lie brackets of the vector fields $f_0$, $f_1$ and $f_2$.
\begin{defi}[Lie bracket of vector fields]\label{def:lie-bracket-champ}
		Let $f,g:\Omega\to\rr^d$ be two smooth (at least $\mathcal{C}^1$) vector fields on an open subset $\Omega$ of $\rr^d$. We define
		\begin{equation}
			[f,g]:x\in\Omega\mapsto \mathrm{D}g(x)\cdot f(x) - \mathrm{D}f(x)\cdot g(x).
		\end{equation}
	\end{defi}
	\subsection{Illustrative examples}
	We propose to study the following example of a quadratic system in an elementary way, as it is representative of the type considered in this article.
	\begin{ex}\label{firstexample} Let us consider 
		\begin{equation}\label{jouet}\left\lbrace\begin{array}{rcl}
				x_1'&=&u\\x_2'&=&v\\x_3'&=&x_1^2+x_2^2+\alpha x_1x_2\end{array}\right.,
		\end{equation}
			with $\alpha\in\rr$. This is a control-affine system of the form \eqref{affine-syst} with $$f_0(x)=\begin{pmatrix}0\\0\\x_1^2+x_2^2+\alpha x_1x_2\end{pmatrix},\quad f_1(x)=e_1,\quad f_2(x)=e_2.$$
		For all $x,y\in\rr$, 
		$$x^2+y^2+\alpha xy=\left(x+\frac{\alpha}{2}y\right)^2+\frac{4-\alpha^2}{4}y^2.$$ Consequently, if $|\alpha|\leq 2$, then $x_3'\geq 0$ and \eqref{jouet} is not $L^{\infty}$-STLC.
		Note that, when $|\alpha|>2$, for every $m\in\nn$,  the system is $W^{m,\infty}-STLC$. Indeed, we can choose $u:s\in(0,t)\mapsto\varphi'(\frac{s}{t})$ and $v:s\in(0,t)\mapsto\psi'(\frac{s}{t})$ with $\varphi,\psi\in\mathcal{C}^{\infty}_c((0,1),\rr)$. If $\psi\equiv 0$, the state moves along $+e_3$. If $\ph\equiv-\frac{\alpha}{2}\psi$, the state moves along $-e_3$, as 
		$$x_3(t)=-\frac{\alpha^2-4}{4}t^3\int_0^1\psi^2.$$
		\end{ex}
		The purpose of this article is to determine assumptions on the evaluation at 0 of the Lie brackets of $f_0$, $f_1$ and $f_2$ under which such a system is, in a sense, embedded within the system \eqref{affine-syst}. This analysis can be generalized to an affine system with integrators, as in the following example.
		\begin{ex}\label{exintegrateurintro}
			Let $k,k'\in\nn^*$ and $\alpha\in\rr$ be such that $|\alpha|\leq 2$. As in the previous case, the following system is not $L^{\infty}$-STLC
			$$\left\lbrace\begin{array}{rcl}
				x_1'&=&u\\x_2'&=&x_1\\&\vdots&\\x_k'&=&x_{k-1}\\	y_1'&=&v\\y_2'&=&y_1\\&\vdots&\\y_{k'}'&=&y_{k'-1}\\z_1'&=&x_k^2+y_{k'}^2+\alpha x_ky_{k'}\end{array}\right..$$
		\end{ex}
		
		In the first two examples, there was an explicitly signed direction due to the presence of a positive definite quadratic form. Here, we present a more general affine system that includes terms which are harmless with respect to STLC. This system captures the essence of the phenomenon we are about to study.
		\begin{ex}\label{excomplexe} We consider the following system
			\begin{equation}\label{art2jouetex}\left\lbrace\begin{array}{rcl}x_1'&=&u\\x_2'&=&x_1\\x_3'&=&v\\x_4'&=&\left(x_1^2+2x_3^2+\frac{1}{2}x_1x_3\right)-1028x_2^2-643vx_1^2-2vx_3\end{array}\right..\end{equation}
			First of all, for all $x,y\in\rr$, $x^2+2y^2+\frac 12 xy\geq \frac 34 \left(x^2+y^2\right)$. Let $T>0$. If $u\in L^1(0,T)$, we note $u_1:t\in[0,T]\mapsto\int_0^tu(s)\ds$. For all $u,v\in L^1(0,T)$, we then obtain
				\begin{equation}\label{art2jouetex1} 
				\int_0^T\left(x_1^2+2x_3^2+\frac{1}{2}x_1x_3\right)(t)\dt\geq \frac{3}{4}\int_0^T\left(x_1^2+x_3^2\right)(t)\dt=\frac{3}{4}\left\|(u_1,v_1)\right\|_{L^2}^2.
				\end{equation}
			Consequently, as in Examples \ref{firstexample} and \ref{exintegrateurintro}, the first three terms of the last line define a positive definite quadratic form. Let us show that the last three terms do not prevent us from drawing the conclusion. We have
			\begin{equation}\label{art2jouetex2}\left|\int_0^Tx_2(t)^2\mathrm{d}t\right|=\left|\int_0^T\left(\int_0^tu_1(s)\ds\right)^2\dt\right|\leq \int_0^T\left\|u_1\right\|_{L^2(0,t)}^2t\mathrm{d}t \leq \frac{T^2}{2}\left\|(u_1,v_1)\right\|_{L^2}^2,\end{equation}
			\begin{equation}\label{art2jouetex3} \left|\int_0^Tv(t)x_1(t)^2\mathrm{d}t\right|=\left|\int_0^Tv(t)u_1(t)^2\dt\right|\leq \left\|v\right\|_{L^{\infty}}\left\|u_1\right\|_{L^2}^2\leq \left\|(u,v)\right\|_{L^{\infty}}\left\|(u_1,v_1)\right\|_{L^2}^2.\end{equation}
			Then, the fourth and fifth terms are negligible compared to $\left\|(u_1,v_1)\right\|_{L^2}^2$ as $\left(T,\left\|(u,v)\right\|_{L^{\infty}}\right)\to 0$. Using \eqref{art2jouetex1}, \eqref{art2jouetex2} and \eqref{art2jouetex3}, an explicit integration of the solution from $0$ leads to
			\begin{align*}x_4(T)&\geq \left(\frac{3}{4}-514T^2-643\left\|(u,v)\right\|_{L^{\infty}}\right)\left\|(u_1,v_1)\right\|_{L^2}^2-x_3(T)^2\\&\geq C\left\|(u_1,v_1)\right\|_{L^2}^2-x_3(T)^2,\end{align*}
		for every $C \in \left(0, \frac{3}{4}\right)$, for small times $T$ and small controls in $L^{\infty}$.  As a result, any target in the set $\{x \in \mathbb{R}^4 \; ; \; x_4 + x_3^2 < 0\}$ is not reachable, so the system \eqref{art2jouetex} is not $L^{\infty}$-STLC.
		\end{ex}
		\subsection{Drift for proving obstructions}
	Our strategy to deny $W^{m,p}\times W^{m',p'}$-STLC consists in proving that system \eqref{affine-syst} has a drift. 
	\begin{defi}[Drift]\label{driftdefi}
		Let $e\in\rr^d$, $N\subset \rr^d$ be a vector subspace, $m,m'\in\nn$, $p,p'\in [1,+\infty]$, $\alpha\in\rr$ and $\Delta:L^1_{\mathrm{loc}}(\rr^+,\rr)^2\to\rr^+$. We say that system \eqref{affine-syst} has a drift along $e$ parallel to $N$ with strength $\Delta$ as $\left(t,t^{\alpha}\left\|(u,v)\right\|_{W^{m,p}\times W^{m',p'}}\right)\to 0$ when there exist $C>0$, $\beta>1$ and $\rho>0$ such that, for every $t\in(0,\rho)$ and $(u,v)\in W^{m,p}((0,t),\rr)\times W^{m',p'}((0,t),\rr)$ with $t^{\alpha}\left\|(u,v)\right\|_{W^{m,p}\times W^{m',p'}}\leq\rho$, 
		\begin{equation}\label{drift}\mathbb{P}x(t;(u,v))\geq C\Delta(u,v)-C\left|x(t;(u,v))\right|^{\beta},\end{equation}
		where $\mathbb{P}$ is a linear form, satisfying $\mathbb{P}(e)>0$ and $\restriction{\mathbb{P}}{N}\equiv 0$.
	\end{defi}
	\begin{lm}\label{lemmedrift}
		With the notations of Definition \ref{driftdefi}, if the system \eqref{affine-syst} has a drift along $e$ parallel to $N$ with strength $\Delta$ as $\left(t,t^{\alpha}\left\|(u,v)\right\|_{W^{m,p}\times W^{m',p'}}\right)\to 0$,
		then, the system \eqref{affine-syst} is not $W^{m,p}\times W^{m',p'}$-STLC.
	\end{lm}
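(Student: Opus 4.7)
The plan is to argue by contradiction, exploiting the fact that the drift inequality forces $\mathbb{P}x(t;(u,v))$ to be almost non-negative for any admissible controls, whereas STLC would allow us to reach points with arbitrarily negative values of $\mathbb{P}$. Concretely, I would first use $\mathbb{P}(e)>0$ to pick a direction in which STLC fails, namely target states of the form $x^*=-\eta e$ for small $\eta>0$, which satisfy $\mathbb{P}(x^*)=-\eta\mathbb{P}(e)<0$.

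Next, I would set up the parameters carefully so the drift hypothesis can be invoked. Let $\rho_0\in(0,1)$, $C>0$ and $\beta>1$ be the constants provided by Definition \ref{driftdefi}. I fix any $t\in(0,\rho_0)$ and any $\rho\in(0,\rho_0)$ and, assuming toward a contradiction that \eqref{affine-syst} is $W^{m,p}\times W^{m',p'}$-STLC, I obtain the corresponding $\delta>0$ from Definition \ref{stlc}. I then choose $\eta>0$ small enough to guarantee simultaneously $\|\eta e\|<\delta$ and $C\eta^{\beta-1}\|e\|^\beta<\mathbb{P}(e)$; this is possible precisely because $\beta>1$.

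With this choice, STLC produces controls $(u,v)\in W^{m,p}((0,t),\rr)\times W^{m',p'}((0,t),\rr)$ with $\|(u,v)\|_{W^{m,p}\times W^{m',p'}}\leqslant\rho$ and $x(t;(u,v))=x^*:=-\eta e$. Since both $t$ and $\|(u,v)\|$ lie below $\rho_0$, the drift estimate \eqref{drift} applies. Using $\Delta\geqslant 0$ to drop that term and evaluating $\mathbb{P}$ at $x^*$, I get
\[
-\eta\,\mathbb{P}(e)=\mathbb{P}(x^*)\geqslant -C\|x^*\|^\beta=-C\eta^\beta\|e\|^\beta,
\]
which after dividing by $\eta>0$ yields $\mathbb{P}(e)\leqslant C\eta^{\beta-1}\|e\|^\beta$. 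This contradicts the way $\eta$ was chosen, so \eqref{affine-syst} cannot be $W^{m,p}\times W^{m',p'}$-STLC.

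I do not anticipate a serious obstacle: the whole content is the observation that a drift inequality with a superlinear error term $\|x(t;(u,v))\|^\beta$, $\beta>1$, beats the linear quantity $\mathbb{P}(x^*)$ along the direction $-e$ at small scales. The only point to be careful about is to pick $t$ and $\rho$ first (inside the window $(0,\rho_0)$ dictated by the drift), then apply STLC to obtain $\delta$, and only then shrink $\eta$; the role of $N$ and the vanishing of $\restriction{\mathbb{P}}{N}$ plays no part here but will matter when the drift is actually constructed later on.
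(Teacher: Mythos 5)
Your proof is correct and follows essentially the same approach as the paper's (pick targets $x^*=-\eta e$ along $-e$, drop the nonnegative $\Delta$ term, and exploit the superlinear exponent $\beta>1$ to make the error term $C\eta^\beta\|e\|^\beta$ lose to the linear term $\eta\,\mathbb{P}(e)$ as $\eta\to 0$). You simply spell out the quantifier order — first fix $t,\rho$ within the drift window, then obtain $\delta$ from the STLC hypothesis, only then shrink $\eta$ — which the paper's terse one-liner leaves implicit; this is a valid and slightly more careful write-up of the same argument.
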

	\begin{proof}The solution $x(t;(u,v))$ cannot reach targets of the form $x^*=-ae$ with $a>0$ small because this would entail
		$$-a\mathbb{P}(e)\geq C\Delta(u,v) -C\left|-ae\right|^{\beta} \geq -C\left|e\right|^{\beta}a^{\beta},$$ which is impossible  for $a$ small since $\beta > 1$ and $\mathbb{P}(e)>0$.
	\end{proof}
	\subsection{Algebraic background}
		We use the definitions and notations of Beauchard and Marbach in \cite{beauchard2024unified}.
	Let $X:= \{X_0,X_1, X_2\}$ be a set of 3 non-commutative indeterminates.
	\begin{defi}[Free algebra]
		\label{def:free-algebra}
		We note $\mathcal{A}(X)$ the free algebra generated by $X$ over the field $\rr$, 
		 \textit{i.e.}\ the unital associative algebra of polynomials of the indeterminates $X_0$, $X_1$ and $X_2$. 
	\end{defi}
	\begin{defi}[Free Lie algebra]	\label{def:free-lie-algebra}
		For $a,b\in\mathcal{A}(X)$, we define the Lie bracket of $a$ and $b$ as $[a,b]:= ab - ba$, also called $\text{ad}_a(b)$. We define by induction on $n\in\nn$, $\text{ad}_a^{n+1}(b)=[a,\text{ad}_a^n(b)]$.
		This operation is anti-symmetric and satisfies the Jacobi's identity: for all $a,b,c\in\mathcal{A}(X)$,
		\begin{equation}\label{jacobibase}[a,[b,c]]+[c,[a,b]]+[b,[c,a]]=0.\end{equation}
		Let $\mathcal{L}(X)$ be the free Lie algebra generated by $X$ over the field $\rr$, 
		 \textit{i.e.}\ the smallest linear subspace of $\mathcal{A}(X)$ containing $X$ and stable by the Lie bracket $[\cdot,\cdot]$.
	\end{defi}
	\begin{ex}
		$X_0^6X_1X_2^2X_0-X_1X_2^3\in\mathcal{A}(X)$ and $2[X_1,X_2]-7[X_0,[X_1,X_0]]\in\mathcal{L}(X)$.
	\end{ex}
	In order to unambiguously define the number of occurrences of an indeterminate within a (monomial) bracket, we introduce the following set of formal brackets.
	\begin{defi}[Iterated brackets]
		Let $\Br(X)$ be the free magma over $X$,  \textit{i.e.}\ the set of iterated brackets of elements of $X$, 
		defined by induction as: $X_0, X_1, X_2 \in \Br(X)$ and if $a, b \in \Br(X)$, then the ordered pair $(a,b)$ belongs to $\Br(X)$. 
		
		There is a natural evaluation mapping $\ev$ from $\Br(X)$ to $\mathcal{L}(X)$ defined by induction by 
		$\ev(X_i):= X_i$ for $i=0,1,2$ and $\ev((a, b)):= [\ev(a), \ev(b)]$. 
	\end{defi}
	\begin{defi}[Length and homogeneous layers within $\mathcal{L}(X)$] \label{coucheetoccuren} For $b\in \Br(X)$, $|b|$ denotes the length of $b$. For $i\in\llbracket 0,2\rrbracket$ and $b \in \Br(X)$, $n_i(b)$ denotes the number of occurrences of the indeterminate $X_i$ in $b$. We will use the notation: $n(b):=n_1(b)+n_2(b)=|b|-n_0(b)$. For $A_1,A_2\subset\nn$, $S_{A_1,A_2}(X)$ is the vector subspace of $\mathcal{L}(X)$ defined by
			\begin{equation}\label{art2sa1a2}
		S_{A_1,A_2}(X):= \spn\{\ev(b); \ b\in\Br(X), \ n_1(b)\in A_1, n_2(b)\in A_2\}.\end{equation}
		For $A\subset\nn$, $S_A(X)$ is defined by
		\begin{equation}\label{art2sa}S_A(X):= \spn\{\ev(b); \ b\in\Br(X), \ n(b)\in A\}.\end{equation}
		For $i\in\nn$, we write $S_i(X)$ instead of $S_{\{i\}}(X)$.
	\end{defi}
	\begin{exs}\begin{enumerate}\item[-]If $b:=(((X_1,(X_0,X_2)),X_2),(X_1,X_2))\in\Br(X)$, then $|b|=6$, $n_0(b)=1$, $n_1(b)=2$, $n_2(b)=3$ and $n(b)=5$.
			\item[-]We have $(X_1,X_1) \in\Br(X)$ and $\ev((X_1,X_1))=0$.
	\end{enumerate}
	\end{exs}
We have defined two notions of Lie brackets: one on vector fields -- see Definition \ref{def:lie-bracket-champ} -- and one on $\mathcal{A}(X)$ -- see Definition \ref{def:free-lie-algebra}. The aim of the following definition is to make them coexist.
	
	\begin{defi}[Evaluated Lie bracket]
		\label{Def:evaluated_Lie_bracket}
		Let $f_0, f_1,f_2$ be $\mathcal{C}^\infty(\Omega,\rr^d)$ vector fields on an open subset $\Omega$ of $\rr^d$ and  $f:=\{f_0,f_1,f_2\}$.
		For $b \in \mathcal{L}(X)$, we define $f_b:=\Lambda(b)$, where $\Lambda:\mathcal{L}(X) \to \mathcal{C}^\infty(\Omega,\rr^d)$ is the unique homomorphism of Lie algebras such that $\Lambda(X_i)=f_i$, for $i\in\llbracket0,2\rrbracket$.	When $b\in\Br(X)$, we will write $f_{b}$ instead of $f_{\ev(b)}$.
		For $\mathcal{N}\subset\mathcal{L}(X)$, we use the notation
		\begin{equation*}
			\mathcal{N}(f)(0):= \spn \{ f_{b}(0);\ b \in \mathcal{N} \} \subseteq \rr^d.
		\end{equation*}
	\end{defi}
		\begin{exs} \begin{enumerate}
			\item[-] If $b:=[[[X_1,X_2],X_0],[X_1,X_0]]$ one has $f_b=[[[f_1,f_2],f_0],[f_1,f_0]]$.
			\item[-] Let $\mathcal{N}:=\{X_1,X_2,[X_1,X_2]\}$. With the vector fields of system \eqref{jouet}, one has
				$$\mathcal{N}(f)(0)=\spn(f_{X_1}(0),f_{X_2}(0),f_{[X_1,X_2]}(0))=\spn(e_1,e_2),$$
				since $f_1$ and $f_2$ are constant vector fields and therefore all higher-degree Lie brackets vanish.
	\end{enumerate}\end{exs}

The vector space $\mathcal{A}(X)$ has a canonical basis (made of monomials $X_{i_1}^{j_1}\cdots X_{i_m}^{j_m}$ where $m\in\nn$, $i_1,\cdots,i_m\in\{0,1,2\}$ and $j_1,\cdots,j_m \in\nn^*$). This is not the case of $\mathcal{L}(X)$. In the free Lie algebra $\mathcal{L}(X)$, we have the concept of "Hall basis" see \cite[Theorem $1.2$]{Krob1987}. We propose in Proposition \ref{basedes2dansl} below an explicit algebraic basis of $S_2(X)$, which is a subset of a Hall basis of $\mathcal{L}(X)$ -- see Section \ref{sectionhallbasis} for details. For this statement, we need the following notation.
		\begin{nota}[Bracket integration $b0^\nu$] \label{def:0nu}
		For $b \in \mathcal{L}(X)$ and $\nu \in \nn$, we use the short-hand $b 0^\nu$ to denote the right-iterated bracket $[\dotsb[b, X_0], \dotsc, X_0]$, where $X_0$ appears $\nu$ times.
	\end{nota}
	\begin{ex} If $b:=[[X_1,X_2],[X_0,X_2]]$ then $b0^2=[[[[X_1,X_2],[X_0,X_2]],X_0],X_0]$.\end{ex}
	\begin{prop}\label{basedes2dansl} An algebraic basis of $S_2(X)$ is given by
	$$\left( W_{j,l}^i:=\left[X_i0^{j-1},X_i0^j\right]0^l\right)_{ j\in\nn^*,l\in\nn,i\in\{1,2\}}\cup\left( C_{j,l}:=(-1)^j\left[X_10^{\lfloor\frac{j+1}{2}\rfloor},X_20^{\lfloor\frac{j}{2}\rfloor}\right]0^l\right)_{ j,l\in\nn}.$$
	\end{prop}
	\begin{nota}
		When $l=0$, we will write $W_j^1,W_j^2,C_j$ instead of $W_{j,0}^1,W_{j,0}^2$ and $C_{j,0}$.
	\end{nota}
		\subsection{Main result}\label{sec-ex}
		\begin{defi}[Iterated primitives]\label{iteratedpri}
			For $j\in\nn$, $t>0$, we define by induction the iterated primitives of $u\in L^1((0,t),\rr)$, denoted $u_j:(0,t)\to\rr$ as $u_0:=u$ and $u_{j+1}(t):=\displaystyle\int_0^tu_j(s)\ds.$
		\end{defi}
	\begin{defi}[BC]\label{defbc}
		Let $e_1,e_2,e_3\in\rr^d$ be three vectors and $N\subset\rr^d$ a vector subspace. We say that $e_1,e_2,e_3,N$ satisfy (BC) if there exists a linear form $\mathbb{P}:\rr^d\to \rr$ such that $\restriction{\mathbb{P}}{N}\equiv0$ and $\mathbb{P}(e_3)^2<\mathbb{P}(e_1)\mathbb{P}(e_2).$
	\end{defi}
	\begin{rmq} The hypothesis $\mathbb{P}(e_3)^2<\mathbb{P}(e_1)\mathbb{P}(e_2)$ ensures that $\mathbb{P}(e_1)$ and $\mathbb{P}(e_2)$ have the same sign. Even if it means replacing $\mathbb{P}$ by $-\mathbb{P}$, we can assume that $\mathbb{P}(e_1)>0$, $\mathbb{P}(e_2)>0$, which will be assumed in the rest of the article. This implies in particular that $\mathbb{P}(e_1+e_2)>0$.
	\end{rmq}
	The purpose of this article is to prove the following theorem.
	\begin{theorem}\label{theoremdriftbis} \footnote{A generalization of this theorem to systems with $r\in\nn^*$ controls is presented in \cite[Theorem F.$0.1$]{gherdaouithese}.} Let $f_0,f_1,f_2$ be analytic vector fields over $\rr^d$ with $f_0(0)=0$.
		Let $k,m \in\nn^*$. We define the integer
		\begin{equation}\label{defpi}
			\pi(k,m):=1+\displaystyle\left\lceil\frac{2k}{m}\right\rceil,\end{equation}and the set \begin{equation}\label{ker}\begin{gathered}\mathcal{N}^m_k:={S}_{\llbracket 1,\pi(k,m)\rrbracket\setminus\left\lbrace 2\right\rbrace}(X)\\\cup\left\lbrace C_{j,l}; \ j\in\llbracket 0,2k-2\rrbracket,l\in\nn\right\rbrace\cup\left\lbrace W_{j,l}^1,W_{j,l}^2; \ j\in\llbracket 1,k-1\rrbracket,l\in\nn\right\rbrace,\end{gathered}\end{equation}
		where the last set in the right-hand side is empty if $k=1$. Assume that \begin{equation}\label{hypo}f_{W_k^1}(0), \quad f_{W_k^2}(0), \quad f_{C_{2k-1}}(0), \quad \mathcal{N}_k^m(f)(0) \quad \text{satisfy}\quad\text{(BC)}.\end{equation}Then, for all $p\in[1,+\infty]$, \eqref{affine-syst} has a drift along $f_{W_k^1}(0)+f_{W_k^2}(0)$ parallel to $\mathcal{N}_k^m(f)(0)$ with strength $\Delta :(u,v)\in L^1_{\mathrm{loc}}(\rr^+,\rr)^2\mapsto\displaystyle\int_0^t\left(u_k^2+v_k^2\right)\in\rr^+$ as $(t,t^{\alpha}\left\|(u,v)\right\|_{W^{m,p}})\to 0$ where $\alpha:=\frac{\pi(k,m)-2k}{\pi(k,m)-1}$. As a consequence, the system \eqref{affine-syst} is not $W^{m,p}$-STLC -- see Lemma \ref{lemmedrift}.
	\end{theorem}
		\begin{rmq}
		The assumption of Theorem \ref{theoremdriftbis} does not depend on $p$. We deny $W^{m,p}$-STLC for every $p\in[1,+\infty]$.
	\end{rmq}
		The parameter $k$ is associated with the drift order, $m,p$ with the regularity of the controls. The historical case $m=0$ can be studied using this theorem, for systems with an integrator, \textit{i.e.} satisfying $x_1'=u$. In general, the restriction to the case $m\geq 1$ is technical and could be improved in future work.		
		The brackets $W_k^1$ and $W_k^2$ are identified as bad brackets; they can create obstructions to STLC. More precisely, Theorem \ref{theoremdriftbis} highlights that, in $W^{m,p}$-STLC, the only brackets likely to compensate them are those in $\mathcal{N}_k^m$, or the bracket 
		$C_{2k-1}$ if its amplitude is sufficiently large.
		\begin{rmq}
			For example, if $k\in\nn^*$ and $m=1$, $\alpha=\frac{1}{2k}>0$. Thus, the controls $u,v$ need only to be bounded in $W^{1,p}$.
		\end{rmq}
		\begin{rmq}Note that, it may be possible to deal with techniques used in \cite[Section $10$]{beauchard2024unified} to remove  the assumption of analyticity of the vector fields $f_0$, $f_1$ and $f_2$. This will not be explored in the article.
		\end{rmq}
		\begin{ex}
			Let us go back to Example \ref{firstexample}. Let $m\geq 1$. One has
		$$f_{W_1^1}(0)=f_{W_1^2}(0)=2e_3, \quad f_{C_1}(0)=\alpha e_3, \quad \mathcal{N}^m_1(f)(0)=\spn\left(e_1,e_2\right).$$
			 Then, we consider the coordinate form $\mathbb{P}:(x_i)_{1\leq i \leq 3}\in\rr^3\mapsto x_3\in\rr$. Thus,
			$$\mathbb{P}\left(f_{W_1^1}(0)\right)=\mathbb{P}\left(f_{W_1^2}(0)\right)=2, \quad \mathbb{P}\left(f_{C_1}(0)\right)=\alpha, \quad \restriction{\mathbb{P}}{\mathcal{N}^m_1(f)(0)}\equiv 0.$$
			The condition $\mathbb{P}(e_3)^2<\mathbb{P}(e_1)\mathbb{P}(e_2)$ is verified if and only if $|\alpha|<2$. Under this hypothesis, one can apply Theorem \ref{theoremdriftbis} and \eqref{jouet} is not $W^{m,p}$-STLC, for every $m\geq 1$, $p\in[1,+\infty]$.
		\end{ex}
		\begin{ex}Let us go back to Example \ref{excomplexe}. Let $m\geq 1$. Since $f_{C_0}(0)=0$ and $f_0(0)=0$, all the iterated brackets vanish at $0$, \textit{i.e.}\ $f_{C_{0,l}}(0)=0$ for $l\in\nn$. Consequently,
			$$f_{W_1^1}(0)=2e_4, \quad f_{W_1^2}(0)=4e_4, \quad f_{C_1}(0)=\frac 12 e_4,\quad \mathcal{N}^m_1(f)(0)=\spn\left(e_1,e_2,e_3\right).$$
					Once again, the coordinate form yields the result.
		\end{ex}
		A major drawback of Theorem \ref{theoremdriftbis} is that its assumption is formulated as the existence of a linear form, which can make its practical application to a given system difficult. To address this issue, we have characterized the (non-)existence of such a linear form -- see Proposition \ref{bclink}. This leads to the following effective reformulation. One recalls that the set $\mathcal{N}_k^m$ is defined in \eqref{ker}.
	\begin{theorem}\label{theoremdrift} Let $f_0,f_1,f_2$ be analytic vector fields over $\rr^d$ with $f_0(0)=0$. Let $k,m \in\nn^*$.
		Let $\sigma:\rr^d\to\rr^d/\mathcal{N}_k^m(f)(0)$ be the canonical surjection, $\tilde{e}_1:=\sigma\left(f_{W_k^1}(0)\right)$, $\tilde{e}_2:=\sigma\left(f_{W_k^2}(0)\right)$ and $\tilde{e}_3:=\sigma\left(f_{C_{2k-1}}(0)\right)$.
		If the system \eqref{affine-syst} is $W^{m,p}$-STLC for a given $p\in[1,+\infty]$, one of the following conditions is satisfied
		\begin{enumerate}
			\item[\textbullet] $\tilde{e}_1=0$ or $\tilde{e}_2=0$,
			\item[\textbullet] $(\tilde{e}_1,\tilde{e}_2)$ is a linearly independent family and $\tilde{e}_3=a\tilde{e}_1+b\tilde{e}_2$ with $ab\geq \frac{1}{4}$,
			\item[\textbullet] $\tilde{e}_2=\beta\tilde{e}_1$ with $\beta<0$,
			\item[\textbullet] $\tilde{e}_2=\beta \tilde{e}_1$, $\tilde{e}_3=\gamma \tilde{e}_1$ with $\beta\leq\gamma^2$ and $\beta\neq 0$.
		\end{enumerate}
	\end{theorem}
	\begin{rmq}
		The canonical surjection $\sigma$ can be interpreted as a linear projection parallel to $\mathcal{N}_k^m(f)(0)$.
	\end{rmq}
	\begin{ex}  Let us return to Example \ref{firstexample}. One has
		$$f_{W_1^1}(0)=f_{W_1^2}(0)=2e_3, \quad f_{C_1}(0)=\alpha e_3,\quad \mathcal{N}^m_1(f)(0)=\spn\left(e_1,e_2\right),$$
		for any integer $m\in\nn^*$. Let $\sigma:\rr^3\to\rr^3/\mathcal{N}_1^m(f)(0)$ be the canonical surjection, $\tilde{e}_1=\sigma\left(f_{W_1^1}(0)\right)$, $\tilde{e}_2=\sigma\left(f_{W_1^2}(0)\right)$ and $\tilde{e}_3=\sigma\left(f_{C_1}(0)\right)$. Thus, $\tilde{e}_1=\tilde{e}_2=2$ and $\tilde{e}_3=\alpha$. The first three points of Theorem \ref{theoremdrift} are not satisfied. The last one is not verified if and only if $|\alpha|<2$ and then, by contraposition, Theorem \ref{theoremdrift} proves that, for every $m\in\nn^*$ and $p\in [1,+\infty]$, the system \eqref{jouet} is not $W^{m,p}$-STLC.
	\end{ex}
	\begin{ex} Let us focus on the following system
		\begin{equation}\label{exf1f2}\left\lbrace\begin{array}{rcl}x_1'&=&u\\x_2'&=&v\\x_3'&=&x_1v\\x_4'&=&x_3\\x_5'&=&\frac 12x_1^2+\frac 12x_2^2+x_4\end{array}\right..\end{equation}
		Let $\ep>0$ and $z\in\rr$. We define the controls
		$$u_z,v_z:t\in[0,4\ep]\mapsto\sqrt{|z|}\left(\mathbb{1}_{(0,\ep)}-\mathbb{1}_{(2\ep,3\ep)}\right)(t),\text{sgn}(z)\sqrt{|z|}\left(\mathbb{1}_{(\ep,2\ep)}-\mathbb{1}_{(3\ep,4\ep)}\right)(t),$$
	where $\mathbb{1}$ denotes the indicator function. One has $\left\|(u_z,v_z)\right\|_{L^{\infty}}\leq |z|^{\frac 12}$ and 
		$$x_1(4\ep)=x_2(4\ep)=0, \qquad x_3(4\ep)=z\ep^2,\qquad x_4(4\ep),x_5(4\ep)=\mathcal{O}\left(z\ep^3\right).$$
		Thus, $e_3=[f_1,f_2](0)$ is a second-order tangent-vector, in the sense of \cite{kawski2}. Noticing that $e_4=[[f_1,f_2],f_0](0)$, $e_5=[[[f_1,f_2],f_0],f_0](0)$, we deduce from \cite[Theorem $6$]{kawski3} that $e_4$ and $e_5$ are also tangent-vectors. Using \cite[Corollary $2.5$]{kawski2}, we obtain the $L^{\infty}$-STLC\ \footnote{In fact, \cite[Theorem $2.15$]{gherdaoui} proves that this system is $W^{m,\infty}_0$-STLC for every $m\in\nn$.} of system \eqref{exf1f2}.
		This example emphasizes the fact that we need to put not only the bracket $[X_1,X_2]$ but also $\{[X_1,X_2]0^{\nu}; \ \nu\in\nn\}$, in the set $\mathcal{N}_k^m$.
	\end{ex}
	\begin{ex} The necessary condition for STLC given by Theorem \ref{theoremdrift} is not sufficient,  \textit{i.e.}\ there are systems that are not $W^{m,p}$-STLC, but that verify at least one of the four points, for a fixed $m\in\nn^*$ and $p\in[1,+\infty]$. For instance, consider the following system
		\begin{equation}\label{ex-quartic}\left\lbrace\begin{array}{rcl} x_1'&=&u\\x_2'&=&v\\x_3'&=&x_1^4+x_2^4\end{array}\right..\end{equation}
		For every $k\in\nn^*$, $f_{W_k^1}(0)=0$. Thus, for all $k,m\in\nn^*$, this system satisfies the first point of Theorem \ref{theoremdrift}. However, for every $m\in\nn^*$ and $p\in[1,+\infty]$, this system is not $W^{m,p}$-STLC because $x_3'\geq 0$. In this situation, quadratic terms do not prevent the system from being controllable. Theorem \ref{theoremdrift} does not allow us to conclude for this example, since the drift arises from quartic terms. An opening to the case of quartic drifts is given in Section \ref{quartic}.
	\end{ex}
	\begin{ex}\label{exassymgeneral} Another counter-example is given by the following system
		\begin{equation}\label{exassym}\left\lbrace\begin{array}{rcl}x_1'&=&u\\x_2'&=&x_1\\x_3'&=&v\\x_4'&=&x_2^2+x_3^2\end{array}\right..\end{equation}Then,
		$f_{W_1^1}(0)=0$ and for all $ k\in\nn_{\geq 2}$, $f_{W_k^2}(0)=0.$
		Consequently, for all $k,m\in\nn^*$, the first point of Theorem \ref{theoremdrift} is satisfied by this system. However, for every $m\in\nn^*$ and $p\in[1,+\infty]$, the system is not $W^{m,p}$-STLC because $x_4'\geq 0$. Here, the obstacle to controllability is created by the bracket $W_2^1+W_1^2$. Indeed, Theorem \ref{theoremdrift} is designed to study competitions with quadratic brackets associated with controls of the same homogeneity in time. For this reason, we prove a generalization that allows us to deal with asymmetrical cases. The latter is addressed in Section \ref{sectionassym}.
	\end{ex}
	\subsection{The strategy to prove drifts}\label{heuristicstrat}
	\begin{defi}[Monomial basis]
		Let $\mathcal{B}$ be a basis of $\mathcal{L}(X)$. We say that $\mathcal{B}$ is monomial  if $\mathcal{B}\subset\ev(\Br(X))$. For such
		bases, if $b\in \mathcal{B}$, one can define $|b|$, $n_i(b)$ and $n(b)$ as in Definition \ref{coucheetoccuren} by importing these notions from $\Br(X)$. For $A\subset\nn$, we define $\mathcal{B}_A:=\{b\in\mathcal{B}; \ n(b)\in A\}$.
	\end{defi}
In this heuristic, we work with $\B$, an abstract monomial basis of $\mathcal{L}(X)$. Section \ref{sectionhallbasis} is dedicated to the construction of an appropriate one. We refer the interested reader to that section for further details.
	
	The proof of Theorem \ref{theoremdriftbis} is based on the Magnus-type representation formula and a strategy developed by Beauchard and Marbach in \cite{beauchard2024unified} to prove quadratic obstructions to STLC for single-input systems. Here are the main points to adapt their strategy to the case of multi-input systems. We fix $\mathcal{B}$, a monomial basis of $\mathcal{L}(X)$, $m\in\nn^*$ and $p\in[1,+\infty]$.
	The purpose is to create a drift -- see Definition \ref{driftdefi} -- as $\left(t,\left\|(u,v)\right\|_{W^{m,p}}\right)\to 0$. Let $M\in\nn^*$. The solution to \eqref{affine-syst} is given by the following formula -- see Proposition \ref{magnus} -- as $\left\|(u,v)\right\|_{L^1}\to0$,
	$$x(t;(u,v))=\mathcal{Z}_M(t;f,(u,v))(0)+\mathcal{O}\left(\left\|(u,v)\right\|_{L^1}^{M+1}+\left|x(t;(u,v))\right|^{1+\frac 1M}\right),$$
	where $\mathcal{Z}_M(t;f,(u,v))$ is an analytic vector field belonging to $S_{\llbracket 1,M\rrbracket}(f)$ and given by
	$$\mathcal{Z}_M(t;f,(u,v))=\sum_{b\in\mathcal{B}_{\llbracket 1,M\rrbracket}}\eta_b(t,(u,v))f_b,$$ where $\eta_b$ are functionals, called coordinates of the pseudo-first kind -- see \cite[Proposition $44$] {Beauchard_2023} for more details. These functionals are not easy to compute. However, the coordinates of the second kind $(\xi_b)_{b\in\mathcal{B}}$ -- see Definition \ref{coordinatesofthesecondkind} and \cite[Section $2.5.3.$]{Beauchard_2023} -- are straightforward to evaluate and there is a link between $\eta_b$ and $\xi_b$. Heuristically, we can think that $\eta_b\approx\xi_b$. Then, the Magnus formula becomes
	\begin{equation}\begin{gathered}\label{magnus-heuristique}x(t;(u,v))=\sum_{b\in\mathcal{B}_{\llbracket 1,M\rrbracket}}\xi_b(t,(u,v))f_b(0)+\sum_{b\in\mathcal{B}_{\llbracket 1,M\rrbracket}}\left(\eta_b-\xi_b\right)(t,(u,v))f_b(0)\\+\mathcal{O}\left(\left\|(u,v)\right\|_{L^1}^{M+1}+\left|x(t;(u,v))\right|^{1+\frac 1M}\right),\end{gathered}\end{equation} where the dominant part is the first sum. We 
	now consider $\mathfrak{b}_1,\mathfrak{b}_2,\mathfrak{b}_3\in\mathcal{B}_{\llbracket 1,M\rrbracket}$, brackets that will be used to create a drift and
	$\mathcal{N}\subset\mathcal{B}_{\llbracket 1,M\rrbracket}\setminus\{\mathfrak{b}_1,\mathfrak{b}_2,\mathfrak{b}_3\}$ the set defined as
	$$\forall b\in\mathcal{B}_{\llbracket 1,M\rrbracket}\setminus\{\mathfrak{b}_1,\mathfrak{b}_2,\mathfrak{b}_3\}, \quad \left(\forall i\in\llbracket 1,3\rrbracket, \  \xi_b\neq o(\xi_{\mathfrak{b_i}})\text{ as }\left(t,\left\|(u,v)\right\|_{W^{m,p}}\right)\to 0\right) \Rightarrow b\in\mathcal{N}.$$
	The elements of $\mathcal{N}$ are the brackets whose second-kind coordinates are not negligible compared to those of the brackets generating the drift.	As we are currently unable to handle these terms, we have chosen to include them in a vector space that will be removed by passing to a quotient. Then, we consider a linear form $\mathbb{P}$ on $\rr^d$, so that
	\begin{equation}\label{heuristique-p}\restriction{\mathbb{P}}{\mathcal{N}(f)(0)}\equiv 0\quad\text{and}\quad\displaystyle\Delta(u,v):=\sum_{i=1}^3\xi_{\mathfrak{b_i}}(t,(u,v))\mathbb{P}\left(f_{\mathfrak{b_i}}(0)\right)\geq 0.\end{equation}
	The existence of such a linear form $\mathbb{P}$ is not systematically guaranteed; this is the assumption (BC) of Theorem \ref{theoremdriftbis}.
	Finally, we fix $M$ (in terms of the parameters $m,p,\mathfrak{b}_1,\mathfrak{b}_2,\mathfrak{b}_3)$ and use interpolation inequalities to absorb the remainder term $\left\|(u,v)\right\|_{L^1}^{M+1}$ by the drift $ \Delta(u,v)$ and obtain
	\begin{equation}\label{heuristique-gn}\left\|(u,v)\right\|_{L^1}^{M+1}\lesssim \left\|(u,v)\right\|_{W^{m,p}}^{\alpha}\Delta(u,v),\end{equation}
	for some $\alpha>0$. Now, the formula \eqref{magnus-heuristique} becomes
	\begin{equation}\begin{gathered}\label{magnus-heuristique2}\mathbb{P}x(t;(u,v))=\Delta(u,v)+\sum_{\substack{b\in\mathcal{B}_{\llbracket 1,M\rrbracket}, \\b\notin\mathcal{N}\cup\{\mathfrak{b_1},\mathfrak{b}_2,\mathfrak{b}_3\}}}\xi_b(t,(u,v))\mathbb{P}\left(f_b(0)\right)\\+\sum_{\substack{b\in\mathcal{B}_{\llbracket 1,M\rrbracket},\\b\notin\mathcal{N}}}\left(\eta_b-\xi_b\right)(t,(u,v))\mathbb{P}\left(f_b(0)\right)+\mathcal{O}\left(\left\|(u,v)\right\|_{W^{m,p}}^{\alpha}\Delta(u,v)\right)+\mathcal{O}\left(\left|x(t;(u,v))\right|^{1+\frac 1M}\right).\end{gathered}\end{equation}
	Intuitively, as $(t,\left\|(u,v)\right\|_{W^{m,p}})\to0$, in the right-hand side of \eqref{magnus-heuristique2},
	\begin{enumerate}
		\item the second term is bounded by 
		$\ep\Delta(u,v)$, thanks to the choice of $\mathcal{N}$,
		\item the third term is small, as $\eta_b\approx\xi_b$,
		\item the fourth term is bounded by 
		$\ep\Delta(u,v)$, thanks to the asymptotic $\left\|(u,v)\right\|_{W^{m,p}}\to 0$,
		\item finally, the last term is part of the definition of a drift -- see Definition \ref{driftdefi}.
	\end{enumerate}
	Here, we pretend that a series of "small" terms keeps the same asymptotic, this fact is true in a precise framework defined in Section \ref{sectionbb}. The brackets in competition in our work are $\mathfrak{b}_1:=W_k^1$, $\mathfrak{b}_2:=W_k^2$ and $\mathfrak{b}_3:=C_{2k-1}$, for $k\in\nn^*$. 
	
		In the asymmetrical case, we use the same strategy as the one described above with a different truncation in the Magnus-type representation formula of the state -- see Proposition \ref{magnus-assym}.
	\subsection{State of the art}\hspace{0.5 cm}\par 
	The first known statement linking Lie brackets and small-time local controllability is proved by Hermann \cite{HERMANN1963325} and Nagano \cite{nagano}. These articles assert that the Lie Algebra Rank Condition is a necessary condition,  \textit{i.e.}\ if \eqref{affine-syst} is $L^{\infty}$-STLC, then, $\text{Lie}(f_0,f_1,f_2)(0):=\spn\{f_b(0); \ b\in\Br(X)\}=\rr^d$. This is the case for a control-affine system with an arbitrary number of controls. This condition is sufficient for systems satisfying $f_0\equiv 0$. This is proved by Chow in \cite{Chow} and Rashevski in \cite{rashevski} in $1938$-$39$. However, this condition is not sufficient in general when $f_0\not\equiv 0$. 
	
	Hermes and Sussmann proved in \cite{HERMES1982166,sussmann} in 1983 the following theorem. We recall that the set $S_A(X)$ for $A\subset \nn$ is defined in \eqref{art2sa}.
	\begin{theorem}
		Let $f_0,f_1,f_2$ be analytic vector fields over $\rr^d$ with $f_0(0) = 0$. Assume that the LARC is verified,  \textit{i.e.}\ $\text{Lie}(f_0,f_1,f_2)(0)=\rr^d$ and that,
		\begin{equation}\label{hermes} \forall k\in\nn^*, \quad {S}_{\llbracket 1,2k\rrbracket}(f)(0)\subseteq{S}_{\llbracket1,2k-1\rrbracket}(f)(0).
		\end{equation}
		Then, \eqref{affine-syst} is $L^{\infty}$-STLC.
	\end{theorem}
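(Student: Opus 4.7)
\begin{sproof}
The plan is to combine a high-order Magnus-type expansion with a compensation argument powered by hypothesis \eqref{hermes}. Fix a large integer $N$ and use the truncation properties of the Magnus formula (Proposition \ref{magnus}) together with analyticity of the vector fields to write, for small $u$,
\[
x(t;u) = \sum_{b \in \mathcal{B}_{\llbracket 1, N\rrbracket}} \xi_b(t,u)\, f_b(0) + r(t, u),
\]
where the remainder $r(t,u) = O(\|u\|_{L^1}^{N+1} + \|x(t;u)\|^{1+1/N})$ can be absorbed by an implicit-function-theorem argument once the leading part is shown to span a neighborhood of $0$.

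In this expansion I would classify Hall brackets as \emph{good} (odd $n_1(b)$) or \emph{bad} (even $n_1(b)$). Good brackets admit second-kind coordinates $\xi_b(t,u)$ that can be driven freely in both sign and magnitude by suitable $u$. Bad brackets, on the contrary, carry a sign-definite leading term (essentially of the form $\int_0^t u_j(s)^2\,\dd s \geqslant 0$) and would generate a drift if left unchecked. Hypothesis \eqref{hermes} is precisely the input that neutralizes them: for every $k \geqslant 1$ and every $b \in \mathcal{B}_{2k}$, one obtains a decomposition $f_b(0) = \sum_{b' \in \mathcal{B}_{2k-1}} \alpha_{b,b'} f_{b'}(0)$, which converts each bad contribution into a correction on good contributions of strictly smaller level.

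The main step is then a local surjectivity argument. By LARC, the good brackets span $\rr^d$; I would select finitely many whose evaluations at $0$ form a basis of $\rr^d$, parametrize a finite-dimensional family of controls (for example, weighted trigonometric polynomials whose second-kind coordinates can be prescribed along this basis), and solve a fixed-point problem for $u$ realizing a given $x^* \in B(0,\delta)$ via Brouwer's theorem. Once the bad contributions are rewritten through \eqref{hermes}, they enter this fixed-point problem as a lower-weight perturbation, which is dominated by the good leading terms for controls of the correct scale.

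The hard part will be the quantitative bookkeeping. The substitution $f_b(0) \mapsto \sum \alpha_{b,b'} f_{b'}(0)$ at level $2k$ generates new cross terms whose weighted scale must be reconciled with that of the chosen control variations; one must check, level by level, that no cascade of uncompensated obstructions arises at higher orders and that the switch between pseudo-first-kind coordinates $\eta_b$ appearing in the Magnus formula and second-kind coordinates $\xi_b$ does not destroy the estimates. This is precisely what forces \eqref{hermes} to be assumed for \emph{every} $k \in \nn^*$ and constitutes the technical heart of the Hermes--Sussmann argument.
\end{sproof}
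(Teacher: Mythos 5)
The paper does not prove this theorem: it appears in Section 1 (state of the art) solely as a cited result of Hermes and Sussmann, so there is no in-paper proof to compare your sketch against. Your proposal is therefore a genuinely different route --- trying to run the Beauchard--Marbach Magnus machinery \emph{forwards} as a surjectivity argument rather than backwards as an obstruction argument --- and it is worth flagging where that route is incomplete.

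The central gap is that the proposal names, but does not execute, the step that \emph{is} the Hermes--Sussmann theorem. You say that good brackets (odd $n_1$) ``admit second-kind coordinates that can be driven freely in both sign and magnitude.'' That is not literally true and is, in fact, the heart of the difficulty: the $\xi_b$ are highly correlated functionals of $u$ (for instance $\xi_{M_0^1}=u_1(t)$ and $\xi_{M_1^1}=u_2(t)$ are linked by integration, and the quadratic coordinates $\xi_{W_{j,l}^1}$ are sign-constrained), so one cannot simply pick a basis of good evaluated brackets and independently prescribe their coordinates. What \emph{is} true is that odd $n_1$ gives the sign symmetry $\xi_b(t,-u)=-\xi_b(t,u)$, so no drift arises from those brackets --- but turning this into local surjectivity requires a carefully weighted family of controls that reaches targets of the form $\sum c_i f_{b_i}(0)$ with $b_i$ of \emph{different} homogeneities simultaneously, and proving nondegeneracy of the resulting finite-dimensional map. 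That weighting-and-stratification argument is precisely what Hermes and Sussmann carry out, and your sketch replaces it with ``a fixed-point problem via Brouwer's theorem.'' Likewise, the assertion that the substitutions furnished by \eqref{hermes} ``enter the fixed-point problem as a lower-weight perturbation'' is exactly the claim needing proof: because \eqref{hermes} must be invoked at every level $k$, the compensation cascade is genuinely infinite-order, and the Magnus truncation in Proposition \ref{magnus} leaves an error $\mathcal{O}(\|u\|_{L^1}^{M+1})$ for a \emph{fixed} $M$; a positive controllability statement requires controlling all orders at the relevant control amplitudes, not just one fixed level. In short, your outline correctly identifies the role of the good/bad dichotomy and of LARC, but the ``hard part'' paragraph is the entire theorem, and the Magnus framework from this paper --- designed to produce a single sign-definite drift --- does not by itself supply the uniform-in-order estimates that a sufficiency argument needs.
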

Hermes thus identifies that obstructions to STLC can only arise from brackets 
$b\in\Br(X)$ such that $n(b)$ is even.
	\subsubsection{For single-input systems}
	Sussmann was interested in the reciprocal of condition \eqref{hermes} in the case of single-input systems: is it necessary? Let us focus first on the case $k=1$. Assume that  there exists a bracket $b\in \mathcal{B}$ such that $f_{b}(0)\in{S}_{\llbracket 1,2\rrbracket}(f)(0)\setminus{S}_1(f)(0)$. The easiest bracket possible in our basis -- see Proposition \ref{basedes2dansl} -- is $b=W_1^1$. Then, in \cite{sussmann}, Sussmann proves the following necessary condition.
	\begin{theorem}\label{sussmann}
		Let $f_0,f_1$ be analytic vector fields over $\rr^d$ with $f_0(0) = 0$. If
		$x'=f_0(x)+uf_1(x)$
		is $L^{\infty}$-STLC, then $f_{W_1^1}(0)=[f_1,[f_1,f_0]](0)\in{S}_1(f)(0)$.
	\end{theorem}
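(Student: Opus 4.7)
The plan is to apply the single-input analog of the strategy of Section \ref{heuristicstrat} with one distinguished bracket $\mathfrak{b}_1:=W_1^1$ and obstruction subspace $N:=S_1(f)(0)$. I argue by contradiction: suppose $e:=f_{W_1^1}(0)\notin N$, and construct a drift along (up to sign) $e$ parallel to $N$ in the sense of Definition \ref{driftdefi}, so that Lemma \ref{lemmedrift} contradicts $L^\infty$-STLC.

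First, since $e\notin N$, pick a linear form $\mathbb{P}:\rr^d\to\rr$ with $\restriction{\mathbb{P}}{N}\equiv 0$ and $\mathbb{P}(e)\neq 0$; its sign will be adjusted below to match the sign of the coefficient of $e$ in the expansion. Next, fix $M\in\nn^*$ large (to be chosen) and apply the Magnus-type representation formula (announced as Proposition \ref{magnus}) to write, as $\|u\|_{L^1}\to 0$,
$$x(t;u)=\sum_{b\in\mathcal{B}_{\llbracket 1,M\rrbracket}}\eta_b(t,u)f_b(0)+\mathcal{O}\!\left(\|u\|_{L^1}^{M+1}+\|x(t;u)\|^{1+\frac{1}{M}}\right),$$
with $\eta_b$ the pseudo-first-kind coordinates. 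For every $b\in\mathcal{B}_1$ one has $f_b(0)\in N$ by definition, so $\mathbb{P}(f_b(0))=0$ and these brackets vanish after composition with $\mathbb{P}$; only brackets $b$ with $n_1(b)\geq 2$ contribute.

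Then identify the dominant quadratic term. Passing from $\eta_b$ to the coordinates of the second kind $\xi_b$ (explicit iterated integrals of $u$), a direct computation yields
$$\xi_{W_1^1}(t,u)=\tfrac{1}{2}\int_0^t u_1(s)^2\,\ds, \qquad \xi_{W_{1,l}^1}(t,u)=\mathcal{O}\!\left(t^l\int_0^t u_1(s)^2\,\ds\right) \text{ for } l\geq 1,$$
so among the length-$n=2$ Hall elements $\{W_{1,l}^1\}_{l\in\nn}$ only $W_1^1$ contributes at leading order as $t\to 0$. This motivates the drift candidate $\Delta(u):=\int_0^t u_1(s)^2\,\ds\geq 0$.

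Finally, absorb the remaining contributions. For each $b\in\mathcal{B}_{\llbracket 2,M\rrbracket}$ with $n_1(b)\geq 3$, interpolation gives an estimate of the form $|\xi_b(t,u)|\lesssim t^{\alpha(b)}\|u\|_{L^\infty}^{n_1(b)-2}\Delta(u)$ for some $\alpha(b)\geq 0$, which is $o(\Delta(u))$ as $(t,\|u\|_{L^\infty})\to 0$. The discrepancy $\eta_b-\xi_b$ obeys an analogous bound via the triangular passage formula between the two coordinate systems (cf.\ \cite{Beauchard_2023}), and for $M$ large enough the Magnus remainder $\|u\|_{L^1}^{M+1}$ is itself dominated by $\|u\|_{L^\infty}^{\alpha}\Delta(u)$ for some $\alpha>0$. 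Combining these estimates,
$$\mathbb{P}x(t;u)\geq c\,\Delta(u)-C\|x(t;u)\|^{1+\frac{1}{M}}$$
for constants $c,C>0$, uniformly as $(t,\|u\|_{L^\infty})\to 0$. This realizes Definition \ref{driftdefi} with $\beta=1+1/M>1$, and Lemma \ref{lemmedrift} then denies $L^\infty$-STLC.

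The hardest point will be the uniform absorption of the higher-order brackets and the sharp control of $\eta_b-\xi_b$: it requires clean interpolation inequalities comparing $|\xi_b|$ to $\Delta(u)$ across the whole Hall basis up to length $M$, plus a rigorous version of the heuristic $\eta_b\approx\xi_b$ valid along the drift scale. The single-input feature that length-$2$ Hall elements reduce to the one family $\{W_{1,l}^1\}_l$ simplifies the quadratic-level competition considerably compared to the multi-input setting of Theorems \ref{theoremdrift} and \ref{theoremdrift2}.
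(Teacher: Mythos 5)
Your overall route (drift along $e:=f_{W_1^1}(0)$, Magnus truncation, second-kind coordinates, Gagliardo--Nirenberg absorption) mirrors the paper's framework, but the statement you are proving is $L^\infty$-STLC, i.e.\ the $m=0$ regularity regime, and the single claim on which your whole absorption step rests is false there. You assert that ``for $M$ large enough the Magnus remainder $\|u\|_{L^1}^{M+1}$ is itself dominated by $\|u\|_{L^\infty}^\alpha\Delta(u)$ for some $\alpha>0$.'' Take $u(s)=\rho\cos(ns/t)$ on $(0,t)$: then $\|u\|_{L^\infty}=\rho$, $\|u\|_{L^1}\to\frac{2}{\pi}\rho t>0$, while $u_1(s)=\rho\frac{t}{n}\sin(ns/t)$ gives $\Delta(u)=\|u_1\|_{L^2}^2\sim\rho^2 t^3/(2n^2)\to 0$ as $n\to\infty$ with $t,\rho$ fixed. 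So $\|u\|_{L^1}^{M+1}$ stays bounded away from $0$ while $\|u\|_{L^\infty}^\alpha\Delta(u)\to 0$, and no choice of $M$, $\alpha$, or $t$-prefactor repairs the inequality. This is not a peripheral technicality: it is exactly the obstruction that forces the paper to impose $m\geqslant 1$ in Theorem \ref{theoremdrift} (``the restriction to the case $m\geqslant 1$ is technical and could be improved in future work'') so that the interpolation of Lemma \ref{gna}, which transfers one $W^{m,p}$ derivative onto $u_1$, becomes available. The paper's own $\pi(k,m)=1+\lceil 2k/m\rceil$ is even undefined at $m=0$; Theorem \ref{sussmann} appears only as a citation to \cite{sussmann} (and can be seen as the $k=1$, $m=0$ instance of \cite{beauchard2024unified}'s Theorem \ref{theodriftmono}), not as something the paper re-derives.

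Concretely, what is missing is a way to control the error $\|u\|_{L^1}^{M+1}$ in the regime where $u$ oscillates and $\Delta(u)$ collapses: either a sharper Magnus remainder that sees this cancellation (e.g.\ one expressed through $\|u_1\|$ rather than $\|u\|_{L^1}$), or a closed-loop/Chen--Fliess argument in the style of Sussmann that avoids the crude $\|u\|_{L^1}^{M+1}$ bound altogether. Your proposal also glosses over a smaller but related point: you write off the length-two Hall elements other than $W_{1,l}^1$ without discussing $W_{j,l}^1$ for $j\geqslant 2$, and while those are indeed absorbed by the $t$-gain from Lemma \ref{holder}, they should at least be mentioned since they are exactly the brackets that defeat naive extensions of the theorem (cf.\ system \eqref{exsussman}). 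The cubic-and-higher estimates $|\xi_b|\lesssim t^{\alpha(b)}\|u\|_{L^\infty}^{n_1(b)-2}\Delta(u)$ and the $\eta_b-\xi_b$ control via Proposition \ref{estim-crossterm} are plausible and in the right spirit, but they cannot save the argument as long as the Magnus remainder itself is uncontrolled.
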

	If $f_{W_1^1}(0)\in{S}_1(f)(0)$, we can now ask what about the bracket $W_2^1=(X_10,X_10^2)$.  We consider the system
	\begin{equation}\label{exsussman}\left\lbrace\begin{array}{rcl}
			x_1'&=&u\\x_2'&=&x_1\\x_3'&=&x_1^3+x_2^2\end{array}\right..\end{equation}
	For this system, 
	$${S}_1(f)(0)=\spn(e_1,e_2), \qquad f_{W_1^1}(0)=0, \qquad f_{W_2^1}(0)=2e_3.$$
	Thus, $f_{W_1^1}(0)\in{S}_1(f)(0)$ and $f_{W_2^1}(0)\notin{S}_1(f)(0)$. However, Sussmann proved in \cite{sussmann} that this system is $L^{\infty}$-STLC. The study of all quadratic drifts is therefore not obvious. Nevertheless, in \cite{beauchard2024unified}, Beauchard and Marbach propose a general method for demonstrating obstructions to the controllability of affine systems. This method is based on an adaptation of the Magnus formula -- see \cite{Beauchard_2023} -- which gives the expression of the solution to a control-affine system in the form of a series of Lie brackets. They use this method to study all quadratic drifts, proving the following statement.
	\begin{theorem}\label{theodriftmono}
		Let $f_0,f_1$ be analytic vector fields over $\rr^d$ with $f_0(0) = 0$ Let $m\in\llbracket-1,+\infty\llbracket$. If system  $x'=f_0(x)+uf_1(x)$ is $W^{m,\infty}$-STLC, then,
		\begin{equation}\label{general}
			\forall k\in\nn^*, \quad  f_{W_k^1}(0)\in{S}_{\llbracket 1,\pi(k,m)\rrbracket\setminus\{2\}}(f)(0),
		\end{equation}
		where
		$\pi(k, m):= 1 +\left\lceil\displaystyle\frac{2k-2}{m+1}\right\rceil$ with the convention $\pi(k,-1)=+\infty$ and $\pi(1,-1)=1$.
	\end{theorem}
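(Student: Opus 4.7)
The plan is to argue by contraposition, following the general scheme of Section~\ref{heuristicstrat} specialized to the single-input case. Suppose that for some $k \geqslant 1$, $f_{W_k^1}(0) \notin S := S_{\llbracket 1, \pi(k,m)\rrbracket \setminus \{2\}}(f)(0)$. By the Hahn--Banach theorem in $\rr^d$, pick a linear form $\mathbb{P}$ with $\mathbb{P}|_S \equiv 0$ and $\mathbb{P}(f_{W_k^1}(0)) > 0$. Only one bracket competes here, namely $\mathfrak{b} = W_k^1$, and the neutralization set $\mathcal{N}$ consists of all Hall brackets $b$ such that $\mathbb{P}(f_b(0)) = 0$.

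Fix $M$ sufficiently large depending on $k$ and $m$ and apply the Magnus-type representation formula at order $M$: for $\|u\|_{L^1}$ small,
$$x(t;u) = \sum_{b \in \mathcal{B}_{\llbracket 1, M \rrbracket}} \eta_b(t,u) f_b(0) + \mathcal{O}\bigl(\|u\|_{L^1}^{M+1} + \|x(t;u)\|^{1+1/M}\bigr).$$
After applying $\mathbb{P}$, all brackets in $\mathcal{N}$ disappear. The second-kind coordinate $\xi_{W_k^1}$ reduces, up to lower-order corrections, to a sign-definite quadratic functional proportional to $\int_0^t u_{k-1}(s)^2 \, \mathrm{d}s$, which plays the role of the drift strength $\Delta(u)$.

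The core step is to bound the remaining contributions by $o(\Delta(u))$ in the regime $(t, \|u\|_{W^{m,\infty}}) \to 0$. For each $b \in \mathcal{B}_{\llbracket 1, M\rrbracket} \setminus (\mathcal{N} \cup \{W_k^1\})$, successive integrations by parts in the explicit multilinear formula for $\xi_b$ (legitimate because $u \in W^{m,\infty}$) trade extra factors of $u$ for derivatives $u^{(j)}$ times positive powers of $t$, reducing $|\mathbb{P}(f_b(0))|\,|\xi_b(t,u)|$ to $o(\Delta(u))$. The discrepancy $\eta_b - \xi_b$ between coordinates of the first and second kind is controlled by the triangular change-of-basis relating them, and contributes only higher-order terms.

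Finally, the remainder $\|u\|_{L^1}^{M+1}$ must be absorbed by $\Delta(u)$. Cauchy--Schwarz gives $\|u\|_{L^1} \leqslant t^{(2k-1)/2}\|u_{k-1}\|_{L^2}$, and interpolating $\|u_{k-1}\|_{L^2}$ between $\Delta(u)^{1/2}$ and $\|u\|_{W^{m,\infty}}$ yields an estimate of the form $\|u\|_{L^1}^{M+1} \leqslant C\, t^{\gamma} \|u\|_{W^{m,\infty}}^{M-1}\Delta(u)$; the threshold $M \geqslant \pi(k,m)$ is exactly what forces $\gamma > 0$, which is the origin of the formula $\pi(k,m) = 1 + \lceil (2k-2)/(m+1)\rceil$. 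Collecting everything produces the drift inequality of Definition~\ref{driftdefi}, and Lemma~\ref{lemmedrift} concludes. The main obstacle is the bookkeeping in the third step: for every Hall bracket $b$ outside $\mathcal{N}$ one must choose the correct sequence of integrations by parts to expose decay in both $t$ and $\|u\|_{W^{m,\infty}}$, without losing against the quadratic drift $\Delta(u)$.
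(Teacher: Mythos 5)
This theorem is not proved in the present paper; it is quoted from Beauchard and Marbach~\cite{beauchard2024unified} as background, and the present paper only proves the multi-input analogue (Theorems~\ref{theoremdrift} and~\ref{theoremdriftbis}) by the same overall strategy. So I will measure your sketch against that strategy.

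The high-level architecture you describe (contraposition, pick a linear form $\mathbb{P}$ annihilating $S_{\llbracket 1,\pi(k,m)\rrbracket\setminus\{2\}}(f)(0)$ with $\mathbb{P}(f_{W_k^1}(0))>0$, apply the Magnus-type formula, extract $\xi_{W_k^1}$ as the dominant term, control $\eta_b-\xi_b$, absorb the $\|u\|_{L^1}^{M+1}$ remainder by interpolation, conclude via Lemma~\ref{lemmedrift}) is indeed the scheme of Section~\ref{heuristicstrat} and of the proof of Theorem~\ref{theoremdriftbis}. Two details, however, are wrong in a way that matters.

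First, a minor but real indexing slip: by~\eqref{egbase-s2bad1}, $\xi_{W_k^1}(t,u)=\tfrac12\int_0^t u_k^2$, not $\int_0^t u_{k-1}^2$. The drift strength is measured by $\|u_k\|_{L^2}^2$, the $k$-th iterated primitive, and all of the subsequent exponent bookkeeping depends on this.

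Second, and this is the genuine gap, the inequality you invoke to absorb the remainder, namely
\[
\|u\|_{L^1}\leqslant t^{(2k-1)/2}\|u_{k-1}\|_{L^2},
\]
is false for $k\geqslant 2$: one cannot dominate the $L^1$ norm of a function by a norm of its iterated primitives (a highly oscillatory $u$ can have $\|u\|_{L^1}$ large while all its primitives are small; the monotonicity of Lemma~\ref{holder} goes in the opposite direction). Since the whole point of the step is to trade $\|u\|_{L^1}^{M+1}$ for a factor $\|u\|_{W^{m,\infty}}^{\alpha}$ times $\|u_k\|_{L^2}^2$, this is not a cosmetic slip: without a correct interpolation, the threshold $\pi(k,m)$ does not appear. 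The paper's mechanism (Lemma~\ref{gna}) is a Gagliardo--Nirenberg interpolation applied to $\varphi=u_k$ with a carefully chosen intermediate exponent $P$, interpolating between $\|u_k^{(m+k)}\|_{L^p}=\|u^{(m)}\|_{L^p}$ and $\|u_k\|_{L^2}$; the regularity $W^{m,p}$ enters only here. Your alternative route of repeatedly integrating by parts inside the explicit formula for $\xi_b$ to replace factors of $u$ by factors of $u^{(j)}$ is also not what the paper does, and is not obviously sound: the coordinate estimates in Proposition~\ref{estimationcoordonnées} are obtained by putting primitives on $u$ (Lemma~\ref{holder}) rather than derivatives, precisely so that no regularity beyond $L^1$ is needed at that stage. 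You would need to make the interpolation step precise to close the argument; as written it does not hold.

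Finally, your definition of $\mathcal{N}$ as \emph{all brackets killed by $\mathbb{P}$} inverts the logic used in the paper: one first fixes the set $\mathcal{N}$ of brackets whose second-kind coordinates are not dominated by $\xi_{W_k^1}$ in the relevant regime, and the hypothesis guarantees that some $\mathbb{P}$ annihilating $\mathcal{N}(f)(0)$ exists. Your formulation would still work if you then checked that every Hall bracket outside your $\mathcal{N}$ has negligible coordinate, but that verification is exactly the content of Propositions~\ref{estim-mainterm} and~\ref{estim-crossterm} and is where the condition $M\geqslant\pi(k,m)$ actually bites.
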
This is an extension of Theorem \ref{sussmann} ($k=1$, $m=0$). In particular, system \eqref{exsussman} is not $W^{1,\infty}$-STLC (apply Theorem \ref{theodriftmono} with $k=2$, $m=1$).
	\par Another necessary condition for controllability is Stefani's, in \cite{stefani}, which is concerned with the drift of the $2k$\textsuperscript{th} order term. The statement is the following one.
	\begin{theorem}\label{propstef}
		Let $f_0,f_1$ be analytic vector fields over $\rr^d$ with $f_0(0) = 0$. If system $x'=f_0(x)+uf_1(x)$ is $L^{\infty}$-STLC, then,
		\begin{equation}\label{stefani}
			\forall k\in\nn^*, \quad  \ad^{2k}_{f_1}(f_0)(0)\in S_{\llbracket 1,2k-1\rrbracket}(f)(0).\end{equation}
	\end{theorem}
	\subsubsection{For multi-input systems: the necessary condition of  \cite{refId0}}\label{stateofartrefId0}
	\par  In \cite{refId0}, Giraldi, Lissy, Moreau and Pomet consider affine systems \eqref{affine-syst} with $f_0(0)=f_2(0)=0$ (this class of systems is different from those studied in this article, because our assumptions imply that $f_2(0) \neq 0$ -- see Lemma \ref{libre}). They prove a necessary condition for $L^{\infty}$-STLC formulated on the bracket $W_1^1$.
	Their statement can be formulated as follow.
	\begin{theorem}\label{cngiraldilissy} 	Let $f_0,f_1,f_2$ be analytic vector fields over $\rr^d$ with $f_0(0) ={f_2(0)}= 0$.
		If \eqref{affine-syst} is $L^{\infty}$-STLC, then $f_{W_1^1}(0)\in S_{1,\nn}(f)(0)$.
	\end{theorem}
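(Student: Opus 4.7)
The plan is to argue by contraposition, following the Beauchard--Marbach strategy described in Section \ref{heuristicstrat} with the distinguished competitor $\mathfrak{b}_1 := W_1^1$. Assume $f_{W_1^1}(0) \notin \mathcal{R}_1(f)(0)$ and pick a linear form $\mathbb{P}$ on $\rr^d$ that vanishes on $\mathcal{R}_1(f)(0)$ and satisfies $\mathbb{P}(f_{W_1^1}(0)) > 0$. The goal is to produce a drift in the sense of Definition \ref{driftdefi}, which by Lemma \ref{lemmedrift} rules out $L^\infty$-STLC.

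First I would apply the truncated Magnus-type representation (Proposition \ref{magnus}) at order $M$ large enough that the remainder $\|(u,v)\|_{L^1}^{M+1}$ is absorbable by interpolation, and compose with $\mathbb{P}$. All Hall brackets $b$ with $n_1(b) \leqslant 1$ are killed; this eliminates the length-three competitors $W_1^2$ and $C_1$, together with every bracket whose evaluation at $0$ lies tautologically in $\mathcal{R}_1(f)(0)$. The $W_1^1$ contribution is controlled by its second-kind coordinate $\xi_{W_1^1}(t,u,v)$, whose leading part (after the heuristic substitution $\eta_b \approx \xi_b$ justified by the link between pseudo-first- and second-kind coordinates) is of definite sign, proportional to $\int_0^t u_1(s)^2\,\ds$.

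It remains to handle Hall brackets $b$ with $n_1(b) \geqslant 2$ and $b \neq W_1^1$, whose evaluations may a priori escape $\mathcal{R}_1(f)(0)$. Here the hypothesis $f_2(0) = 0$ enters crucially: whenever $X_2$ appears as an outermost factor of a sub-bracket, the Leibniz formula yields $f_b(0) = \pm\,\mathrm{D}f_2(0)\,f_{b'}(0)$ for a shorter $b'$, and iterating this reduction rewrites $f_b(0)$ in terms of values of shorter brackets. Combined with the $L^\infty$ homogeneity estimates on the second-kind coordinates, this allows each such contribution to be either recognized as an element of $\mathcal{R}_1(f)(0)$ (hence killed by $\mathbb{P}$) or absorbed into $\ep \int_0^t u_1(s)^2\,\ds$ via interpolation. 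One ends up with $\mathbb{P}\,x(t;u,v) \geqslant C \int_0^t u_1(s)^2\,\ds - C\|x(t;u,v)\|^{\beta}$ for some $\beta > 1$, which is the desired drift.

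The main obstacle is this last step's bookkeeping: enumerating every Hall bracket $b$ with $n_1(b) \geqslant 2$ whose second-kind coordinate is not $o(\xi_{W_1^1})$ as $(t,\|(u,v)\|_{L^\infty}) \to 0$, and verifying term by term that either the $f_2(0) = 0$ reduction drops $f_b(0)$ into $\mathcal{R}_1(f)(0)$ or an interpolation inequality dominates $\xi_b$ by an $\ep$-fraction of $\xi_{W_1^1}$. This is precisely the sort of combinatorial Lie-algebraic analysis that the paper's main Theorems \ref{theoremdrift} and \ref{theoremdrift2} develop in full generality, so the present statement can be viewed as the simplest case of that machinery.
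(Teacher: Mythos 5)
The paper does not actually prove Theorem~\ref{cngiraldilissy}: it is quoted from Giraldi--Lissy--Moreau--Pomet, and the paper explicitly says that their proof ``use[s] the Chen-Fliess formula and re-organize[s] its terms to form Lie brackets (as Stefani did in \cite{stefani})''. Your proposal to rederive it via the Magnus-type machinery of Sections~2--3 is therefore a genuinely different route, and the paper itself flags that this class of systems ($f_2(0)=0$) lies \emph{outside} the scope of its own Theorem~\ref{theoremdrift}: by Lemma~\ref{libre}, the hypothesis~(BC) that drives Theorem~\ref{theoremdriftbis} forces $f_2(0)\neq 0$, and indeed $f_2(0)=0$ makes $f_{W_1^2}(0)=0$ so the first bullet of Theorem~\ref{theoremdrift} is vacuously satisfied. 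So no meaningful comparison with the paper's proof is possible; what one can do is assess whether your Magnus-type sketch closes on its own. I do not think it does, for the following reasons.

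First, and most seriously, there is a structural obstacle to the truncation step. The set $\mathcal{R}_1=\{b\in\Br(X);\ n_1(b)\leqslant 1\}$ puts no bound on $n_2(b)$ (the paper remarks on this explicitly: ``there is no bound on $n_2(b)$ [in] $\mathcal{R}_1$''). Proposition~\ref{magnus} truncates at total homogeneity $M=n_1+n_2$ and leaves a remainder $\mathcal{O}(\|(u,v)\|_{L^1}^{M+1})$, which contains terms of the form $\|v\|_{L^1}^{M+1}$ with \emph{no} factor of $u$. Your only available drift functional is $\Delta(u,v)=\int_0^t u_1^2$ — the competitors $W_1^2$, $C_1$ have $n_1\leqslant 1$ and are killed by $\mathbb{P}$, so there is no $v$-coercive term on the right-hand side of \eqref{drift}. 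Taking $u\equiv 0$ and $v\not\equiv 0$ makes $\Delta=0$ while $\|v\|_{L^1}^{M+1}>0$: no interpolation inequality can dominate the former by the latter. Proposition~\ref{magnus-assym} (the asymmetrical truncation) does not help, because it keeps all Hall brackets with $n_2(b)\leqslant N$, and for any finite $N$ the set $\mathcal{R}_1$ still contains brackets with $n_2>N$, while the remainder $\|v\|_{L^1}^{N+1}$ still cannot be absorbed.

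Second, the step you call the ``$f_2(0)=0$ reduction'' is not correct as stated. What one actually has, for $b=(X_2,b')$, is $f_b(0)=[f_2,f_{b'}](0)=-\mathrm{D}f_2(0)\,f_{b'}(0)$ (the other Leibniz term vanishes since $f_2(0)=0$). This is a fixed linear map applied to the value of the shorter bracket, \emph{not} the value of a shorter bracket itself, and there is no a priori reason the image should lie in $\mathcal{R}_1(f)(0)$. The reduction would be valid only if one already knew $f_{b'}(0)\in\mathcal{R}_1(f)(0)$, which is exactly what needs to be proved when $n_1(b')\geqslant 2$ — the argument is circular. Finally, a smaller but real issue: the statement concerns $L^\infty$-STLC, i.e.\ $m=0$, whereas the interpolation Lemma~\ref{gna} and Theorem~\ref{theoremdriftbis} are established for $m\geqslant 1$, and the paper points out that the $m=0$ case is only accessible in its framework ``for systems with an integrator''. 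So the regularity regime of the claimed theorem also falls outside the range where the machinery you want to cite has been verified.
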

	In this framework, $f_{W_1^2}(0)=0$ and the equivalent of set $\mathcal{N}_k^m$ is $S_{1,\nn}(X)$. They prove a drift thanks to the coercivity of $\int u_1^2$, whereas in our setting, it results from a combination of two  terms. As a consequence, there is no bound on $n_2(b)$ is $S_{1,\nn}(X)$, they put brackets with associated coordinates of the second kind of any order on $v$ in the set $S_{1,\nn}(X)$.
	For the proof, they use the Chen-Fliess formula and re-organize its terms to form Lie brackets (as Stefani did in \cite{stefani}).
	
	In the same paper \cite{refId0}, the authors prove another necessary condition for STLC of multi-input control-affine systems. In simpler special cases, the theorem can be stated as follows.
	\begin{theorem}\label{giraldi2} 	Let $f_0,f_1,f_2$ be analytic vector fields over $\rr^d$ with $f_0(0) ={f_2(0)}= 0$. Assume that $f_{W_1^1}(0),f_{(X_1,(X_2,X_1))}(0),f_{((X_2,X_1),(X_0,X_1))}(0)\in S_{1,\nn}(f)(0)$. Let 
		\begin{equation*}\begin{gathered}q:(a_1,a_2)\in\rr^2\mapsto -a_1^2f_{W_2^1}(0)-a_2^2f_{((X_2,X_1),(X_2,X_1)0)}(0)\\-a_1a_2\left(f_{((X_2,X_1),M_2^1)}+f_{(M_1^1,(X_2,X_1)0)}\right)(0)\in\rr^d.\end{gathered}\end{equation*}
		If there exists a linear form $\ph: \rr^d\to\rr$, whose restriction to $S_{1,\nn}(f)(0)$ is zero and such that the quadratic form $(a_1,a_2)\in\rr^2\mapsto\ph\left(q(a_1,a_2)\right)$ is positive definite, then system \eqref{affine-syst} is not $W^{1,\infty}\times L^{\infty}$-STLC.
	\end{theorem}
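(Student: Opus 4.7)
The strategy is to specialize the Magnus-based drift scheme of Section~\ref{heuristicstrat} to the asymmetric regime $f_2(0)=0$, using the positive-definite form $\varphi\circ q$ as the source of the drift. First, I would invoke the Magnus-type representation of Proposition~\ref{magnus} at a truncation order $M$ large enough that every Hall bracket $b$ with $n(b)\leqslant 2$, together with the relevant integrated families $b0^{\nu}$, appears explicitly, and apply the linear form $\varphi$ to the resulting expansion. Because $\varphi$ annihilates $\mathcal{R}_1(f)(0)$, all length-one terms and all brackets with $n_1(b)\leqslant 1$ vanish; the three additional hypotheses $f_{W_1^1}(0), f_{(X_1,(X_2,X_1))}(0), f_{((X_2,X_1),(X_0,X_1))}(0)\in\mathcal{R}_1(f)(0)$ then ensure that, among the remaining brackets with $n_1(b)=2$, only those whose evaluations enter $q$ can produce a non-vanishing contribution modulo $\mathcal{R}_1(f)(0)$.

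Next, I would compute the second-kind coordinates of $W_2^1$, $((X_2,X_1),(X_2,X_1)0)$, $((X_2,X_1),M_2^1)$ and $(M_1^1,(X_2,X_1)0)$ and re-express them in terms of two scalar moments $a_1,a_2$ built from $(u,v)$ (essentially $a_1$ an $L^2$-type quantity of $u_1$ and $a_2$ a bilinear pairing capturing the $(X_2,X_1)$-interaction between $u$ and $v$). Applying $\varphi$ and using Jacobi manipulations to discard the contributions whose values lie in $\mathcal{R}_1(f)(0)$, the combined quadratic contribution to $\varphi\,x(t;(u,v))$ should assemble exactly into $-\varphi(q(a_1,a_2))$, up to remainders that are either cubic in $(a_1,a_2)$ or carry an extra factor $\|(u,v)\|_{W^{1,\infty}\times L^{\infty}}$. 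The positive-definiteness of $\varphi\circ q$ then yields a coercive lower bound $-\varphi(q(a_1,a_2))\geqslant c(a_1^2+a_2^2)=:\Delta(u,v)$.

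Finally, I would absorb the cubic remainders, the pseudo-coordinate corrections $\eta_b-\xi_b$, and the Magnus tail $\|(u,v)\|_{L^1}^{M+1}$ into $\Delta(u,v)$ via interpolation estimates of the type \eqref{heuristique-gn}, choosing $M$ sufficiently large relative to the homogeneity of the four quadratic brackets. Together with the remainder $\|x(t;(u,v))\|^{1+1/M}$ inherited from Proposition~\ref{magnus}, this realizes a drift in the sense of Definition~\ref{driftdefi} along any direction $e$ with $-\varphi(e)>0$, parallel to $\mathcal{R}_1(f)(0)$, and Lemma~\ref{lemmedrift} then denies $W^{1,\infty}\times L^\infty$-STLC. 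The main obstacle I expect is the bookkeeping at the quadratic level: verifying that the second-kind coordinates of those four specific brackets genuinely organize, modulo $\mathcal{R}_1(f)(0)$ and after Jacobi rearrangements, into a single quadratic form in exactly two independent scalar parameters, and that every other potentially competing bracket with $n_1(b)=2$ is either eliminated by the three stated hypotheses or produces only a subdominant contribution in the non-standard scaling forced by $f_2(0)=0$.
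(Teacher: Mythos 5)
Note first that this paper does not prove Theorem~\ref{giraldi2}: it is quoted (in the present notations) as a result of Giraldi, Lissy, Moreau and Pomet~\cite{refId0}, whose proof relies on the Chen--Fliess series and a regrouping of its terms into Lie brackets, as for Theorem~\ref{cngiraldilissy}. There is therefore no proof in the paper to compare your attempt against, and your Magnus-based route is, by construction, a genuinely different approach from the source.

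That said, your sketch inherits a structural difficulty that the paper itself flags and does not claim to resolve. All of the Magnus machinery developed here --- Proposition~\ref{magnus}, Corollary~\ref{bb}, Lemma~\ref{libre}, and the black-box estimates of Propositions~\ref{estim-mainterm} and~\ref{estim-crossterm} --- is calibrated to neutralizing sets of type $\mathcal{N}_k^m$ (or $\mathcal{N}_{k,k'}^{m,m'}$), which have $n(b)$ uniformly bounded apart from the harmless $0$-appended families $C_{j,l}$, $W_{j,l}^i$; and the bracket condition (BC) then forces $f_1(0)$ and $f_2(0)$ to be linearly independent (Lemma~\ref{libre}), so $f_2(0)\neq 0$. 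Theorem~\ref{giraldi2} instead posits $f_2(0)=0$, and its neutralized set $\mathcal{R}_1=\{b\in\Br(X):\ n_1(b)\leqslant 1\}$ has no bound on $n_2(b)$, hence none on $n(b)$; it is not contained in any $\mathcal{B}_{\llbracket 1,M\rrbracket}$. When you ask for a truncation order $M$ ``large enough that every Hall bracket $b$ with $n(b)\leqslant 2$, together with the relevant integrated families $b0^\nu$, appears explicitly,'' you are conflating two things: no finite $M$ encloses $\mathcal{R}_1$, and for every $M$ there are many brackets with $n_1(b)=2$, $2\leqslant n_2(b)\leqslant M-2$, that are neither annihilated by $\varphi$ nor part of the quartet generating $q$, whose subdominance must be extracted precisely from $f_2(0)=0$ (not from smallness of $\|(u,v)\|$ alone, since $v$ is only controlled in $L^\infty$). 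That mechanism is never described in your sketch --- you call it ``the main obstacle'' but give no resolution --- and without a replacement for Lemma~\ref{libre} and for the black-box estimates adapted to $\mathcal{R}_1$ and $f_2(0)=0$, the argument does not close. If you want to pursue the Magnus route, the first task is to prove analogues of Propositions~\ref{estim-mainterm} and~\ref{estim-crossterm} in which the gain comes from the vanishing of $f_2$ at the origin rather than from a bound on $n_2(b)$; short of that, the Chen--Fliess strategy of~\cite{refId0} remains the only known proof.
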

Corollary \ref{bcquadratic} ensures that the hypothesis (BC) can be exactly formulated as the assumption of Theorem \ref{giraldi2}. Consequently, we use the same types of assumptions in Theorems \ref{theoremdriftbis} and \ref{theoremdriftbis2}
	\subsubsection{For multi-input systems: link with Sussmann's sufficient $\mathcal{S}(\theta)$-condition}
	\par Now, let us compare our necessary condition with Sussmann's sufficient $\mathcal{S}(\theta)$-condition -- see \cite[Theorem 7.3]{doi:10.1137/0325011} -- recalled in Theorem \ref{Prop:Sussmann} below -- see \cite[Theorem 3.29]{coronbook}.
	\begin{defi}
		The map $\sigma: \Br(X) \mapsto \mathcal{L}(X)$ is defined by $\sigma(b)=\ev(b)+\pi(\ev(b))$, where
		$\pi:\mathcal{L}(X) \mapsto \mathcal{L}(X)$ is the unique morphism of Lie algebra such that $\pi(X_0)=X_0$, $\pi(X_1)=X_2$ and $\pi(X_2)=X_1$.
	\end{defi} 
	For instance, $\sigma(W_k^1)=W_k^1+W_k^2$.
	\begin{theorem}[Sussmann's $\mathcal{S}(\theta)$-condition] \label{Prop:Sussmann}
		Let $f_0, f_1, f_2$ be analytic vector fields over $\rr^d$ with $f_0(0)=0$
		that satisfy the LARC, Lie$(f_0,f_1,f_2)(0)=\rr^d$.
		If there exists $\theta\in[0,1]$ such that, for every $\mathfrak{b} \in \Br(X)$ with $n_0(\mathfrak{b})$ odd and both $n_1(\mathfrak{b})$ and $n_2(\mathfrak{b})$ even,
		we have
		\begin{equation} \label{Hyp_Sussm}
			f_{\sigma(\mathfrak{b})}(0) \in \spn\{ f_b(0); \ b \in \Br(X), \ n(b)+\theta n_0(b)<n(\mathfrak{b}) +\theta n_0(\mathfrak{b})\}
		\end{equation}
		then \eqref{affine-syst} is $L^\infty$-STLC.
	\end{theorem}
	Sussmann’s $\mathcal{S}(\theta)$-condition is a very popular sufficient condition for controllability of affine systems because it is quite
	simple to apply. Nevertheless, more powerful conditions are known, for example Agrachev
	and Gamkrelidze’s -- see \cite[Theorem $4$]{agrachev} -- or Krastanov's -- see \cite[Theorem $2.7$]{Krastanov}.

	Our necessary condition also involves the Lie brackets $W_k^1$ and $W_k^2$. If $m=1$ and the hypothesis \eqref{hypo} of Theorem \ref{theoremdriftbis} is verified, then, for every $\theta\in[0,1]$, \eqref{Hyp_Sussm} does not hold for $\mathfrak{b}=W_k^1$ -- see Appendix \ref{append-linkstheta} for a proof.\par
	Moreover, for the system \eqref{jouet}, we have $f_{\sigma(W_1^1)}(0)=4e_3$ and $\mathcal{N}_1^1(f)(0)=\spn(e_1,e_2)$. Furthermore, for every $\theta\in[0,1]$, 
	$$\spn(f_b(0), \ b\in\Br(X), \ n(b)+\theta n_0(b)<2+\theta)\subseteq\mathcal{N}_1^1(f)(0).$$
	Consequently, $W_1^1$ does not satisfy the condition \eqref{Hyp_Sussm}. However, as already explained in Section \ref{sec-ex}, the $W^{1,p}$-STLC of \eqref{jouet} is function of the value of $\alpha$. The Sussmann's $\mathcal{S}(\theta)$-condition is sensitive to the direction of the Lie brackets, but not to their amplitude.
	\subsubsection{For multi-input systems:  the necessary condition of  \cite{lewis}}\label{stateofartlewis}
	In \cite{lewis}, Lewis and Hirschorn use the Chen–Fliess series to prove the following result, which we restate using
	our slightly different context and our own notations.
	\begin{theorem}\label{lewis} Let $f_0,f_1,f_2$ be analytic vector fields with $f_0(0)=0$ and $(f_1(0),f_2(0))$ linearly independent.
		Assume that $0$ is a regular point for $\text{Lie}(f_1,f_2)(0)$\footnote{\textit{i.e.}\ $\dim\{g(x); g \in\text{Lie}(f_1,f_2)\}$ does not depend on $x$ on a neighborhood of $0$}. Let $N:=S_1(f)(0)+ \spn\{\ad^{\mu}_{f_0}([f_1,f_2])(0); \ \mu\in\nn\}+ \text{Lie}(f_1,f_2)(0)$. Assume that the system \eqref{affine-syst} is $L^{\infty}$-STLC. Let $\sigma: \rr^d\to\rr^d/N$ be the canonical surjection. Let us define the vector-valued quadratic form
		$$B_N : (a_1,a_2)\in\rr^2\mapsto a_1^2\sigma(f_{W_1^1}(0)) + a_2^2\sigma(f_{W_1^2}(0)) + 2a_1a_2\sigma(f_{C_1}(0))\in\rr^d/N.$$
		Then, there does not exist a linear form $\mathbb{P}:\rr^d/N\to \rr$ such that the quadratic form $(a_1,a_2)\in\rr^2\mapsto\mathbb{P}\left(B_N(a_1,a_2)\right)\in\rr$ is a positive definite quadratic form.
	\end{theorem}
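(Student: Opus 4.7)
The plan is to adapt the Beauchard--Marbach / Magnus-based strategy of Section~\ref{heuristicstrat} with $k=1$, using $W_1^1$, $W_1^2$ and $C_1$ as the three quadratic Hall brackets producing the drift, and showing that every other surviving term is either annihilated by $\mathbb{P}$ or absorbed. I argue by contradiction: assume there is a linear form $\mathbb{P}:\rr^d/N\to\rr$ whose pullback to $\rr^d$ (still denoted $\mathbb{P}$) satisfies $\restriction{\mathbb{P}}{N}=0$ and has $(a_1,a_2)\mapsto\mathbb{P}(B_N(a_1,a_2))$ positive definite. Writing $\alpha:=\mathbb{P}(f_{W_1^1}(0))$, $\beta:=\mathbb{P}(f_{W_1^2}(0))$, $\gamma:=\mathbb{P}(f_{C_1}(0))$, this gives $\alpha,\beta>0$ and $\alpha\beta>\gamma^2$, so a constant $c>0$ with $\alpha a_1^2+2\gamma a_1a_2+\beta a_2^2\geqslant c(a_1^2+a_2^2)$ on $\rr^2$; in particular, one may take $e:=f_{W_1^1}(0)$ as the drift direction in Definition~\ref{driftdefi} since $\mathbb{P}(e)=\alpha>0$.

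Fix a truncation order $M\in\nn^*$ (to be chosen large at the end) and apply the Magnus-type formula (Proposition~\ref{magnus}) combined with the pseudo-first-kind/second-kind correspondence to obtain
\begin{equation*}
\mathbb{P}x(t;(u,v))=\sum_{b\in\mathcal{B}_{\llbracket 1,M\rrbracket}}\xi_b(t,(u,v))\mathbb{P}(f_b(0))+\sum_{b\in\mathcal{B}_{\llbracket 1,M\rrbracket}}(\eta_b-\xi_b)(t,(u,v))\mathbb{P}(f_b(0))+\mathcal{O}\!\left(\|(u,v)\|_{L^1}^{M+1}+\|x\|^{1+\tfrac{1}{M}}\right).
\end{equation*}
The assumption $\restriction{\mathbb{P}}{N}=0$ simultaneously kills three families of Hall brackets: the first layer $M_j^1,M_j^2$ (from $S_1(f)(0)\subset N$); every $C_{0,l}=(X_1,X_2)0^l$ (from $f_{C_{0,l}}(0)\in\spn\{\ad^{\mu}_{f_0}([f_1,f_2])(0)\}\subset N$); and every Hall bracket $b$ with $n_0(b)=0$ (from $\text{Lie}(f_1,f_2)(0)\subset N$). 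The surviving brackets therefore satisfy $n(b)\geqslant 2$, $n_0(b)\geqslant 1$, and $b\neq C_{0,l}$.

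Among these survivors, the only length-$3$ brackets with $n(b)=2$ are $W_1^1$, $W_1^2$ and $C_1$. A direct computation of the second-kind coordinates in the Hall basis (up to a common positive multiplicative constant) gives
\[
\xi_{W_1^1}=\int_0^t u_1^2\ds,\qquad \xi_{W_1^2}=\int_0^t v_1^2\ds,\qquad \xi_{C_1}=\int_0^t u_1v_1\ds,
\]
so the candidate drift strength is
\[
\Delta(u,v):=\alpha\xi_{W_1^1}+\beta\xi_{W_1^2}+2\gamma\xi_{C_1}=\int_0^t\bigl(\alpha u_1^2+2\gamma u_1v_1+\beta v_1^2\bigr)\ds\geqslant c\bigl(\|u_1\|_{L^2}^2+\|v_1\|_{L^2}^2\bigr).
\]

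The main obstacle is to dominate every other surviving contribution by $\varepsilon\Delta(u,v)$. Other survivors with $n(b)=2$ have length $|b|\geqslant 4$ and are obtained from $W_1^1,W_1^2,C_1$ by additional $X_0$-integrations or by raising one factor to a higher iterated primitive; Cauchy--Schwarz on iterated primitives then yields $|\xi_b|\leqslant Ct^{|b|-3}(\|u_1\|_{L^2}^2+\|v_1\|_{L^2}^2)\leqslant C't^{|b|-3}\Delta(u,v)$, which is $o(\Delta(u,v))$ as $t\to 0$. Survivors with $n(b)\geqslant 3$ satisfy the crude bound $|\xi_b|\leqslant Ct^{|b|}\|(u,v)\|_{L^\infty}^{n(b)}$, and interpolation against $\Delta(u,v)$ extracts two control factors controlled by $\sqrt{\Delta(u,v)}$, leaving $|\xi_b|\leqslant C\|(u,v)\|_{L^\infty}^{n(b)-2}\Delta(u,v)=o(\Delta(u,v))$ as $\|(u,v)\|_{L^\infty}\to 0$. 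The pseudo-first-kind corrections $\eta_b-\xi_b$ are handled identically by the interpolation framework of \cite{beauchard2024unified}, and the Magnus remainder $\|(u,v)\|_{L^1}^{M+1}\leqslant(t\rho)^{M+1}$ is made arbitrarily small by taking $M$ large. This yields a lower bound of the form $\mathbb{P}x(t;(u,v))\geqslant\tfrac{c}{4}\Delta(u,v)-C\|x(t;(u,v))\|^{1+1/M}$, which is a drift along $e$ parallel to $N$ with strength $\Delta$ in the sense of Definition~\ref{driftdefi}; Lemma~\ref{lemmedrift} then contradicts the assumed $L^\infty$-STLC.
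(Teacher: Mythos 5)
Theorem~\ref{lewis} is not proved in this paper: it is a restatement of a result of Lewis and Hirschorn from \cite{lewis}, who establish it using the Chen--Fliess series. There is therefore no internal proof to compare your attempt against; I assess it on its own merits.

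Your attempt to reprove the theorem by adapting the Magnus/drift machinery of Section~\ref{heuristicstrat} does not close, and the paper itself signals why (right after Theorem~\ref{lewis}: ``in our low regularity regime $k=m=1$, we require compensation for all brackets of $S_3$, which is not the case in Theorem~\ref{lewis}''). The central difficulty is that in the $L^\infty$ regime the drift strength $\Delta(u,v)\gtrsim\|u_1\|_{L^2}^2+\|v_1\|_{L^2}^2$ cannot dominate the error terms, because it can be driven to zero while $\|(u,v)\|_{L^\infty}$ and $\|(u,v)\|_{L^1}$ stay fixed. Concretely, take $u(s)=\rho\sin(2\pi Ns/t)$ on $[0,t]$: then $\|u\|_{L^\infty}=\rho$ and $\|u\|_{L^1}\sim\rho t$ are independent of $N$, while $\|u_1\|_{L^2}^2\sim \rho^2t^3/N^2\to 0$ as $N\to\infty$. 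Two of your absorption steps fail for exactly this reason. First, the Magnus remainder $\mathcal{O}\bigl(\|(u,v)\|_{L^1}^{M+1}\bigr)$ cannot be ``made arbitrarily small by taking $M$ large'': what is required is smallness \emph{relative to} $\Delta(u,v)$, and the ratio $\|(u,v)\|_{L^1}^{M+1}/\Delta(u,v)\sim\rho^{M-1}t^{M-2}N^2$ is unbounded for every fixed $M$. In the paper's framework this is precisely the role of the Gagliardo--Nirenberg inequality in Lemma~\ref{gna}, which trades $\|u\|_{L^1}^{\pi+1}$ for $\|u_k\|_{L^2}^2$ at the cost of a factor $\|u\|_{W^{m,p}}^{\pi-1}$ with $m\geqslant 1$; no such trade is available in the $L^\infty$ setting. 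Second, your interpolation claim for survivors with $n(b)\geqslant 3$ and $n_0(b)\geqslant 1$, namely that $t^{|b|}\|(u,v)\|_{L^\infty}^{n(b)}\leqslant C\|(u,v)\|_{L^\infty}^{n(b)-2}\Delta(u,v)$, reduces to $t^{|b|}\|(u,v)\|_{L^\infty}^{2}\leqslant C\Delta(u,v)$, which the same example contradicts. Some of these brackets can be salvaged by sharper bracket-by-bracket estimates (e.g.\ $\xi_{(M_0^1,W_1^1)}=\tfrac12\int u_1^3\leqslant \tfrac12\|u_1\|_{L^\infty}\|u_1\|_{L^2}^2\lesssim t\rho\,\Delta(u,v)$), but you would need to prove an analogue of Proposition~\ref{estimationcoordonnées} for the whole layer $n(b)\geqslant 3$, which the paper does not supply, and which would still leave the Magnus-remainder problem intact.

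Finally, you never invoke the hypothesis that $0$ is a regular point for $\mathrm{Lie}(f_1,f_2)$. This is a genuine structural assumption of Theorem~\ref{lewis}, and it is presumably what allows Lewis and Hirschorn's Chen--Fliess argument to bypass the obstructions above (for instance by straightening out the $\mathrm{Lie}(f_1,f_2)$-directions). A correct proof would require a genuinely different strategy from a term-by-term absorption into $\Delta(u,v)$.
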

	In \cite{lewis}, the authors focus on the case where $k=1$, whereas we deal with the case of $k\in\nn^*$. They prove a necessary condition for $L^{\infty}$-STLC of control-affine systems without any assumption of smallness on the controls. In this article, we work in the functional framework $W^{m,p}$-STLC, with $m\in\nn^*$ and $p\in[1,+\infty]$. They require the compensation of all the elements of $\text{Lie}(f_1,f_2)(0)$, while we can use the smallness of the controls. However in our low regularity regime $k=m=1$, we require compensation for all brackets of $S_3$, which is not the case in Theorem \ref{lewis}. Enlightened by \cite{lewis}, it might be possible te refine our result.
	
	Other authors have studied obstructions to controllability linked with quadratic phenomenons -- see \textit{e.g.}, \cite{agrachev2,RogerBrockett2013}.
	\subsection{Structure of the article}
	The paper is organized as follow: in Section $2$, we present some tools and properties that will be used in the next sections.
	In Section $3$, we give the proof of Theorem \ref{theoremdriftbis}. Finally, in Section $4$, we prove a generalization of this result in the asymmetrical case -- see Theorem \ref{theoremdriftbis2}. Some elements of proof are developed in the appendix.
		\section{Requirements for the proof}
	\subsection{Hall sets and bases of $\mathcal{L}(X)$}\label{sectionhallbasis}
	The purpose of this section is to introduce tools for constructing a convenient basis of the free Lie algebra $\mathcal{L}(X)$.
	\begin{defi}[Left and right factors] For $b\in\Br(X)$ with 
		$|b|> 1$, $b$ can be written in a unique way as $b = (b_1, b_2)$, with $b_1, b_2\in\Br(X)$. We use the notations
		$\lambda(b) = b_1$ and $\mu(b) = b_2$, which define maps $\lambda,\mu :\Br(X)\setminus X\to\Br(X)$.
	\end{defi}
	\begin{ex} If $b:=((X_1,X_2),((X_2,X_0),X_0))$, we have $\lambda(b)=(X_1,X_2)$ and $\mu(b)=((X_2,X_0),X_0)$. \end{ex}
	\begin{defi}[Hall set]
		A \emph{Hall set} is a subset $\B$ of $\Br(X)$ endowed with a total order $<$ such that
		\begin{itemize}
			\item $X \subset \B$,
			\item for all $b_1, b_2 \in \Br(X)$, $(b_1, b_2) \in \B$ if and only if $b_1, b_2 \in \B$, $b_1 < b_2$ and either $b_2 \in X$ or $\lambda(b_2) \leq b_1$, 
			\item for all $b_1, b_2 \in \B$ such that $(b_1,b_2) \in \B$ then $b_1 < (b_1,b_2)$.
		\end{itemize}
	\end{defi}
This definition may appear mysterious at first glance, but its main usefulness lies in the following theorem, due to Viennot -- see \cite{Krob1987}.
	\begin{theorem}
		\label{thm:viennot}
		Let $\B \subset \Br(X)$ be a Hall set. 
		Then $\ev(\B)$ is a basis of $\mathcal{L}(X)$.
	\end{theorem}
	The goal is now to construct a Hall set in order to obtain a basis for the free Lie algebra $\mathcal{L}(X)$. To simplify the formulas that follow, we will use the following notations.
	\begin{notas} \begin{enumerate}
		\item[-]	For $\B$ a Hall set and $A \subset \mathbb{N}$, we denote by $\B_A$ the subset of $\B$ defined by
		$\B_A:=\{ b \in \B; \ n(b) \in A \}$. We write $\B_i$ instead of $\B_{\{i\}}$. 
		\item[-] 	As for $\mathcal{L}(X)$, we define $b 0^\nu$ for $b \in \Br(X)$ and $\nu \in \nn$ as $(\dotsb(b, X_0), \dotsc, X_0)$, where $X_0$ appears $\nu$ times.
		\end{enumerate}
	\end{notas}
	The definition of a Hall set is also an algorithm for its construction.
	Indeed, the subsets $\B_N$ of a Hall set $\B$ can be constructed by induction on $N$. 
	We may start, for example,  with $\B_0=\{X_0\}$ and 
	$\B_1=\{ X_1 0^{\nu_1}, X_2 0^{\nu_2}; \ \nu_1, \nu_2 \in \mathbb{N}\}$ with the following order
	$$\forall k\in \mathbb{N}, \qquad	X_10^k<X_20^k<X_10^{k+1}<X_20^{k+1}<\cdots<X_0.$$
	which is compatible with the 3 axioms above. 
	For $N \geq 2$, to find all Hall elements $b \in \B_N$ given $\B_{\llbracket 1 , N-1 \rrbracket}$,
	one adds first all $(a, b)$ with $a \in \B_{N-1}$, $b \in X$ and $a < b$. 
	Then for each bracket $b = (b_1, b_2) \in \B_{\llbracket 1 , N-1 \rrbracket}$,
	one adds all the $(a, b)$ with $a \in \B_{N-n(b)}$ and $b_1 \leq a < b$. 
	Finally, one inserts the newly generated elements of $\B_N$ into an ordering, maintaining the condition that $a < (a, b)$.
	\begin{nota} For $b=(b_1,b_2)\in\Br(X)$ and $j\in\nn$, we use the notation $(-1)^jb$ as $(-1)^jb=(b_2,b_1)$ if $j$ is odd and $(-1)^jb=b$ if $j$ is even. This notation is unconventional, it is used to condense the writing.
	\end{nota}
	With this construction, we obtain the following statement, already used in \cite[Proposition $2.13$]{gherdaoui}.
	\begin{prop}\label{Prop:Bs} There exists a Hall set $\mathcal{B}$ such that $X_0$ is maximal, 
		\begin{equation}\label{base-s1}\mathcal{B}_1=\left\lbrace M_j^1:=X_10^j, \quad M_j^2:=X_20^j;\quad j\in\nn\right\rbrace,\end{equation}
		and $\mathcal{B}_2=\mathcal{B}_{2,good}\cup\mathcal{B}_{2,bad}$ with 
		\begin{equation}\label{base-s2bad}\mathcal{B}_{2,bad}=\left\lbrace W_{j,l}^1:=(M_{j-1}^1,M_j^1)0^l,\quad W_{j,l}^2:=(M_{j-1}^2,M_j^2)0^l;\quad j\in\nn^*,l\in\nn\right\rbrace,\end{equation} and
		\begin{equation}\label{base-s2good}\mathcal{B}_{2,good}=\left\lbrace C_{j,l}:=(-1)^j\left(M_{\lfloor\frac{j+1}{2}\rfloor}^1,M_{\lfloor\frac{j}{2}\rfloor}^2\right)0^l;\quad j,l\in\nn\right\rbrace.\end{equation}
	Moreover, to avoid cluttering the formulas, all these symbols will indifferently denote either the elements of $\Br(X)$ themselves or their evaluation by $\ev$ in $\mathcal{L}(X)$, for example in Proposition \ref{basedes2dansl}.
	\end{prop}
	\begin{nota}
		When $l=0$, we will write $W_j^1,W_j^2,C_j$ instead of $W_{j,0}^1,W_{j,0}^2$ and $C_{j,0}$.
	\end{nota}
	\begin{rmq}
		This basis will play a central role in what follows. In particular, Proposition \ref{basedes2dansl} -- previously stated but whose proof was postponed -- is a direct consequence of this result.
	\end{rmq}
	\subsection{Expression of the coordinates of the second kind}
We recall that the iterated primitives $u_j$ of $u\in L^1((0,t),\rr)$ are specified in Definition \ref{iteratedpri}.
	\begin{defi}[Coordinates of the second kind]\label{coordinatesofthesecondkind}
		Let $\mathcal{B} \subset \Br(X)$ be a Hall set.
		The coordinates of the second kind associated with $\B$ is the unique family
		$(\xi_b)_{b\in\B}$ of functionals $\xi_b:\rr_+ \times L^1_{\mathrm{loc}}(\rr_+,\rr)^2 \to \rr$ defined by induction as: 
		for every $t>0$ and $u, v \in L^1((0,t),\rr)$,
		\begin{itemize}
			\item $\xi_{X_0}(t,(u,v)):= t,$ $\xi_{X_1}(t,(u,v)):=u_1(t)$ and $\xi_{X_2}(t,(u,v)):= v_1(t)$,
			\item for $b \in \mathcal{B} \setminus X$, there exists  a unique couple $(b_1,b_2)$ of elements of $\mathcal{B}$ such that $b_1<b_2$ and a unique maximal integer $m\geq 1$ with $b=\ad_{b_1}^m (b_2)$ and then
			\begin{equation}
				\xi_{b}(t,(u,v)):=\frac{1}{m!} \int_0^t  \xi_{b_1}^m(s,(u,v)) {\xi}'_{b_2}(s,(u,v)) \dd s.
			\end{equation}
		\end{itemize}
	\end{defi}

 The following proposition is taken from \cite[Proposition $2.18$]{gherdaoui}.
	\begin{prop}The following equalities hold.
		\begin{enumerate}
			\item 
			For $b\in\mathcal{B}$ and $\nu\in\nn$, 
			\begin{equation}\label{fonctcrochetgauche}\xi_{b0^{\nu}}(t,(u,v))=\int_0^t\frac{(t-s)^{\nu}}{\nu !}\xi_b'(s,(u,v))\ds.\end{equation}
			\item For every $j\in\nn$,
			\begin{equation}\label{egbase-s11}\xi_{M_j^1}(t,(u,v))=u_{j+1}(t),\end{equation}
			\begin{equation}\label{egbase-s12}\xi_{M_j^2}(t,(u,v))=v_{j+1}(t).\end{equation}
			\item For every $j\in\nn^*$, $l\in\nn$,
			\begin{equation}\label{egbase-s2bad1}\xi_{W_{j,l}^1}(t,(u,v))=\frac{1}{2}\int_0^t\frac{(t-s)^l}{l!}u_j^2(s)\ds,\end{equation}
			\begin{equation}\label{egbase-s2bad2}\xi_{W_{j,l}^2}(t,(u,v))=\frac{1}{2}\int_0^t\frac{(t-s)^l}{l!}v_j^2(s)\ds.\end{equation}
			\item For every $j,l\in\nn$,
			\begin{equation}\label{egbase-s2good}\xi_{C_{j,l}}(t,(u,v))=\int_0^t\frac{(t-s)^l}{l!}u_{\left\lfloor\frac{j}{2}\right\rfloor+1}(s)v_{\left\lfloor\frac{j+1}{2}\right\rfloor}(s)\ds.\end{equation}
		\end{enumerate}
	\end{prop}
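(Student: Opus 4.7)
The plan is to prove item $1$ first as the main analytic tool, then derive items $2$, $3$ and $4$ by identifying the correct Hall decomposition $b=\ad_{b_1}^m(b_2)$ with $m$ maximal in each case. I first note that $\xi_b(0)=0$ for every $b\in\B$, which follows by a straightforward induction on $|b|$ from the definition, since each $\xi_b$ is expressed as an integral from $0$.

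For item $1$, the Hall decomposition of $b0^\nu$ is $(b0^{\nu-1},X_0)$ with $m=1$, because $X_0$ is maximal and $b0^{\nu-1}<X_0$. The inductive definition then gives $\xi_{b0^\nu}(t)=\int_0^t \xi_{b0^{\nu-1}}(s)\,\dd s$, and an induction on $\nu$ combined with Fubini (swapping the order of integration in the resulting iterated integral $\int_0^t\!\int_0^s (s-r)^{\nu-1}/(\nu-1)!\,\xi_b'(r)\,\dd r\,\dd s$) produces the kernel $(t-s)^\nu/\nu!$. The base case $\nu=0$ reduces to $\xi_b(t)=\int_0^t\xi_b'(s)\,\dd s$, which holds because $\xi_b(0)=0$. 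Item $2$ is then immediate from item $1$ applied to $b=X_1$ or $X_2$, combined with the standard formula $u_{j+1}(t)=\int_0^t (t-s)^j/j!\,u(s)\,\dd s$ for iterated primitives (another Fubini argument).

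For item $3$, the key subtlety is the Hall decomposition of $(M_{j-1}^1,M_j^1)$. Since $M_j^1=(M_{j-1}^1,X_0)$, one has $(M_{j-1}^1,M_j^1)=\ad_{M_{j-1}^1}^2(X_0)$, so the \emph{maximal} integer is $m=2$, with $b_1=M_{j-1}^1$ and $b_2=X_0$. The inductive definition then produces the factor $1/2!$ and yields $\xi_{(M_{j-1}^1,M_j^1)}(t)=\frac{1}{2}\int_0^t u_j^2(s)\,\dd s$ via item $2$. Applying item $1$ with $\nu=l$ delivers the formula for $W_{j,l}^1$, and the argument for $W_{j,l}^2$ is identical. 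For item $4$, I split on the parity of $j$: using the $(-1)^j$ convention, the computation reduces to evaluating $\xi_{(M_k^1,M_k^2)}$ (for $j=2k$) and $\xi_{(M_k^2,M_{k+1}^1)}$ (for $j=2k+1$). In both cases, the Hall decomposition has $m=1$, since neither $M_k^2$ nor $M_{k+1}^1$ is of the form $\ad_{M_k^1}(\cdot)$ or $\ad_{M_k^2}(\cdot)$. Direct application of the inductive definition and item $2$, followed by item $1$ to absorb the $0^l$ tail, produces the claimed formula, with the floor-function indices matching the parity of $j$ in each case.

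The main obstacle I expect is item $3$: one must detect the hidden $m=2$ iteration inside $(M_{j-1}^1,M_j^1)$, arising from the fact that the right factor is itself $[M_{j-1}^1,X_0]$. Overlooking this would wrongly use $m=1$ and produce $\int_0^t \xi_{M_{j-1}^1}(s)\xi_{M_j^1}'(s)\,\dd s$ instead of $\frac{1}{2}\int_0^t u_j^2(s)\,\dd s$, missing precisely the factor $1/2$ that appears in the statement and that will play a central role in the quadratic obstruction analysis.
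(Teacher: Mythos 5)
Your proof is correct; for item $4$ (the only item the paper proves directly) it matches the paper's argument, which likewise reduces $C_{j,l}$ to $C_j$ via item $1$ and then reads off the Hall decomposition $C_j=\ad_{M_{j_0}^1}^1(M_{j_0}^2)$ for $j=2j_0$ even (and similarly for $j$ odd). The paper simply cites an external reference for items $1$--$3$, and your direct proofs of those -- in particular the crucial observation that $(M_{j-1}^1,M_j^1)=\ad_{M_{j-1}^1}^2(X_0)$, giving the maximal $m=2$ and hence the factor $\frac{1}{2}$ -- follow the same standard route as that reference.
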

	\begin{proof}
		The first three points are proved in \cite[Lemma $3.6$ and Proposition $3.7$]{beauchard2024unified}. Let us prove the last one: let $j,l\in\nn$. Using the first point,
		\begin{equation*}\label{fonct}\xi_{C_{j,l}}(t,(u,v))=\int_0^t\frac{(t-s)^l}{l!}\xi'_{C_j}(s,(u,v))\ds.\end{equation*}
		First, assume that $j=2j_0$ is even. As $C_j=\ad_{X_10^{j_0}}^1(X_20^{j_0})$, Definition \ref{coordinatesofthesecondkind},  \eqref{egbase-s11} and \eqref{egbase-s12} give
		\begin{equation*}\label{fonct2}\xi_{C_j}(t,(u,v))=\int_0^tu_{j_0+1}(s)v_{j_0}(s)\mathrm{d}s=\int_0^tu_{\lfloor\frac{j}{2}\rfloor+1}(s)v_{\lfloor\frac{j+1}{2}\rfloor}(s)\ds.\end{equation*}
An analogous argument yields the result when $j$ is odd.
	\end{proof}
Given a function $h$ that is $\mu$-times differentiable, we denote its $\mu$-th derivative by $h^{(\mu)}$.

	\begin{lm}\label{coordinates-ipp}
		Let $j,l,N\in\nn$ be such that $N\leq\lfloor\frac{j+1}{2}\rfloor-1$. Let $t>0$ and $u,v\in L^1((0,t),\rr)$. Then, 
		\begin{equation}\begin{gathered}\xi_{C_{j,l}}(t,(u,v))=\sum_{\mu=0}^N(-1)^{\mu}u_{\lfloor\frac j2\rfloor+\mu+2}(t)\left(v_{\lfloor\frac{j+1}{2}\rfloor}\frac{(t-\cdot)^l}{l!}\right)^{(\mu)}(t)+\\(-1)^{N+1}\int_0^tu_{\lfloor\frac j2\rfloor+N+2}(s)\frac{\mathrm{d}^{N+1}}{\mathrm{d}s^{N+1}}\left(v_{\lfloor\frac{j+1}{2}\rfloor}(s)\frac{(t-s)^l}{l!}\right)\ds.\end{gathered}\end{equation}
	\end{lm}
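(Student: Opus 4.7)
The starting point is identity \eqref{egbase-s2good}, which expresses $\xi_{C_{j,l}}(t,(u,v))$ as a single integral of $u_{\lfloor j/2\rfloor+1}(s)\,g(s)$ over $(0,t)$, where $g(s):=v_{\lfloor (j+1)/2\rfloor}(s)\frac{(t-s)^l}{l!}$. The plan is to perform $N+1$ successive integrations by parts, each time transferring one derivative from the iterated primitive of $u$ onto $g$, and to record the resulting boundary terms at $s=t$ as the sum in the statement. I would proceed by induction on $N\in\nn$.

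For the initialization ($N=0$), I use the definition of iterated primitives to write $u_{\lfloor j/2\rfloor+1}=u'_{\lfloor j/2\rfloor+2}$, with $u_{\lfloor j/2\rfloor+2}(0)=0$. A single integration by parts then produces the boundary contribution $u_{\lfloor j/2\rfloor+2}(t)\,g(t)$ at $s=t$ (the one at $s=0$ vanishes) and the remainder $-\int_0^t u_{\lfloor j/2\rfloor+2}(s)\,g'(s)\,ds$, which is exactly the claimed formula at rank $0$. For the induction step, assume the formula holds at rank $N$ and focus on its remainder $(-1)^{N+1}\int_0^t u_{\lfloor j/2\rfloor+N+2}(s)\,g^{(N+1)}(s)\,ds$. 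Writing $u_{\lfloor j/2\rfloor+N+2}=u'_{\lfloor j/2\rfloor+N+3}$ and integrating by parts once more, the boundary term at $s=0$ vanishes again, the one at $s=t$ supplies the new summand corresponding to $\mu=N+1$ with sign $(-1)^{N+1}$, and the leftover integral carries sign $(-1)^{N+2}$, yielding the formula at rank $N+1$.

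No real analytic difficulty arises: each $u_n$ with $n\geqslant 1$ is an iterated primitive of an $L^1$ function, hence absolutely continuous with $u_n(0)=0$, so every integration by parts is justified (and, up to the relevant order, the factor $g$ is regular enough because only polynomial derivatives of $(t-s)^l/l!$ and iterated primitives of $v$ intervene). The only place that requires attention is therefore the bookkeeping: one must check that the shift in the $u$-subscript, the alternating signs, and the derivative order on $g$ all increment correctly at each step. This is purely mechanical but easy to mis-state, and it is the step where I would write things out most carefully.
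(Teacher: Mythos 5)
Your proof is correct and takes essentially the same approach as the paper, which simply states that the lemma is proved by induction on $N$ using integration by parts; your detailed base case and inductive step (using $u_n=u_{n+1}'$ and $u_{n+1}(0)=0$) are exactly the intended argument.
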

	\begin{proof}We prove the lemma by induction on $N$, using an integration by parts.\end{proof}
	\subsection{Estimates on the coordinates of the second kind}
	\begin{lm}\label{holder} Let $p\in[1,+\infty]$ and $j_0\in\nn^*$. For every $j\geq j_0$, $t>0$, $u\in L^1((0,t),\rr)$, 
		\begin{equation}\label{injec}\left\|u_j\right\|_{L^p}\leq \frac{t^{j-j_0}}{(j-j_0)!}\left\|u_{j_0}\right\|_{L^p}.	\end{equation}
	\end{lm}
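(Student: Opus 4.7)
The plan is to reduce the iterated primitive $u_j$ to a single integral against $u_{j_0}$ via the classical Cauchy formula for repeated integration, and then to apply a convolution-type inequality to pass to the $L^p$ norm. The case $j = j_0$ is immediate, so I assume $j > j_0$ throughout and set $k := j - j_0 \geqslant 1$.

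First I would establish, by induction on $k \geqslant 1$, the representation
\begin{equation*}
u_{j_0+k}(t) = \int_0^t \frac{(t-s)^{k-1}}{(k-1)!} u_{j_0}(s) \ds, \qquad t > 0.
\end{equation*}
The base case $k = 1$ is the very definition of $u_{j_0+1}$. For the inductive step, I would write $u_{j_0+k+1}(t) = \int_0^t u_{j_0+k}(\tau)\dd\tau$, substitute the induction hypothesis, and swap the two integrals by Fubini; the inner integral in $\tau$ then reproduces the kernel $(t-s)^{k}/k!$.

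Next, extending $u_{j_0}$ by zero outside $(0,t)$ and introducing $g(s) := \frac{s^{k-1}}{(k-1)!}\mathbb{1}_{(0,t)}(s)$, the Cauchy formula exhibits the restriction of $u_j$ to $(0,t)$ as the convolution $g * u_{j_0}$. Young's convolution inequality then gives
\begin{equation*}
\|u_j\|_{L^p((0,t))} \leqslant \|g\|_{L^1(\rr)}\,\|u_{j_0}\|_{L^p((0,t))} = \frac{t^{k}}{k!}\,\|u_{j_0}\|_{L^p((0,t))},
\end{equation*}
where the equality results from a direct computation of $\|g\|_{L^1(\rr)}$. Replacing $k$ by $j - j_0$ yields the announced bound.

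There is no substantive obstacle here; the only point requiring attention is the endpoint $p = +\infty$, which is covered by the $L^1\times L^\infty\to L^\infty$ version of Young's inequality. An equally valid route avoiding convolutions is to apply Minkowski's integral inequality directly to the Cauchy representation, which handles all $p \in [1,+\infty]$ uniformly. Note that the naive induction $\|u_{j+1}\|_{L^p} \leqslant t\,\|u_j\|_{L^p}$ (via Hölder on the defining integral) is insufficient, as it loses the factorial factor $1/(j-j_0)!$; hence the detour through the Cauchy formula is essential.
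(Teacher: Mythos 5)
Your proof is correct. The paper itself does not spell out an argument here; it simply cites \cite[Lemma A.$6$]{beauchard2024unified} and moves on, so there is no in-text proof to compare against. Your route — Cauchy's formula for repeated integration established by induction and Fubini, then either Young's convolution inequality $L^1 \times L^p \to L^p$ (with the $L^1 \times L^\infty \to L^\infty$ endpoint) or Minkowski's integral inequality applied to the kernel $(t-s)^{k-1}/(k-1)!$ — is the standard and, as far as I can tell, the intended one. Your remark that the naive iteration $\|u_{j+1}\|_{L^p}\leqslant t\,\|u_j\|_{L^p}$ loses the factorial is also accurate and worth keeping, since the $1/(j-j_0)!$ factor is precisely what makes the later series estimates of Propositions \ref{estim-mainterm} and \ref{estim-crossterm} converge.
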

	\begin{proof}
		This lemma is proved in \cite[Lemma A.$6$]{beauchard2024unified}.
	\end{proof}
	\begin{prop}\label{estimationcoordonnées} The following inequalities hold.
		\begin{enumerate}
			\item Let $p\in[1,+\infty]$ and $j_0\in\nn^*$. There exists $c>0$ such that, for every $j\geq j_0$, $t>0$, $u,v\in L^1((0,t),\rr)$,
			\begin{equation}\label{estbase-s11}
				\left|\xi_{M_j^1}(t,(u,v))\right|\leq \frac{(ct)^{|M_j^1|}}{|M_j^1|!}t^{-(j_0+1)}t^{1-\frac 1p}\left\|u_{j_0}\right\|_{L^p},
			\end{equation}
			\begin{equation}\label{estbase-s12}
				\left|\xi_{M_j^2}(t,(u,v))\right|\leq \frac{(ct)^{|M_j^2|}}{|M_j^2|!}t^{-(j_0+1)}t^{1-\frac 1p}\left\|v_{j_0}\right\|_{L^p}.
			\end{equation}
			\item Let $p\in[1,+\infty]$ and $j_0\in\nn^*$. There exists $c>0$ such that, for every $j\geq j_0$, $l\in\nn$, $t>0$, $u,v\in L^1((0,t),\rr)$,
			\begin{equation}\label{estbase-s2bad1}
				\left|\xi_{W_{j,l}^1}(t,(u,v))\right|\leq \frac{(ct)^{|W_{j,l}^1|}}{|W_{j,l}^1|!}t^{-(2j_0+1)}t^{1-\frac 1p}\left\|u_{j_0}\right\|_{L^{2p}}^2,
			\end{equation}
			\begin{equation}\label{estbase-s2bad2}
				\left|\xi_{W_{j,l}^2}(t,(u,v))\right|\leq \frac{(ct)^{|W_{j,l}^2|}}{|W_{j,l}^2|!}t^{-(2j_0+1)}t^{1-\frac 1p}\left\|v_{j_0}\right\|_{L^{2p}}^2.
			\end{equation}
			\item Let $p,q\in[1,+\infty]$ be such that $\frac 1p+\frac 1q\leq 1$ and $k,k'\in\nn^*$ with $k'\leq k$. There exists $c>0$ such that, for every $j\geq k+k'-1$, $l\in\nn$, $0<t<1$, $u,v\in L^1((0,t),\rr)$,
			\begin{equation}\label{estbase-s2good1}\begin{gathered}
					\left|\xi_{C_{j,l}}(t,(u,v))\right|\leq \frac{(ct)^{|C_{j,l}|}}{|C_{j,l}|!}t^{-(1+k+k')}t^{1-\left(\frac 1p+\frac 1q\right)}\left\|u_k\right\|_{L^p}\left\|v_{k'}\right\|_{L^q}\\+\mathbb{1}_{l\leq k-2-\lfloor\frac j2\rfloor}K\left(\sum_{\mu=1}^k|u_{\mu}(t)|^2+t\left\|v_{k'}\right\|_{L^2}^2\right),\end{gathered}
			\end{equation}
			where $K$ only depends on $k$ and $\mathbb{1}$ denotes the indicator function. In other words, if $l> k-2-\lfloor\frac j2\rfloor$, there is no boundary term.
		\end{enumerate}
	\end{prop}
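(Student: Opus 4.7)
The plan is to handle each of the three items by starting from the explicit formulas given just above for the second-kind coordinates, bounding the kernel $(t-s)^l/l!$ pointwise by $t^l/l!$, applying Hölder's inequality to pass to the required $L^p$ norm, and then invoking Lemma \ref{holder} to reduce the index of the iterated primitive. The only nontrivial ingredient, beyond these moves, is a factorial bound of the shape
\begin{equation*}
\frac{(|b|)!}{(\text{product of factorials coming from the formula for }\xi_b)} \leqslant c^{|b|},
\end{equation*}
with $c$ depending only on the chosen threshold $j_0$ (resp.\ $k,k'$). In each case this follows from the elementary observation that the ratio is a product of binomial coefficients, hence bounded by $C\cdot 4^{|b|}$ up to a polynomial factor.

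For item~1, I would write $\xi_{M_j^1}(t)=u_{j+1}(t)$, bound $|u_{j+1}(t)|\leqslant \|u_j\|_{L^1}\leqslant t^{1-1/p}\|u_j\|_{L^p}$, and then apply Lemma~\ref{holder} to pass from $\|u_j\|_{L^p}$ to $\|u_{j_0}\|_{L^p}$ at the cost of a factor $t^{j-j_0}/(j-j_0)!$. Comparing with the target reduces to $(j+1)!/(j-j_0)!\leqslant c^{j+1}$, which is clear for $c=c(j_0)$ large enough. Item~2 is analogous: from the formula $\xi_{W_{j,l}^1}(t)=\tfrac{1}{2}\int_0^t\tfrac{(t-s)^l}{l!}u_j^2\,\mathrm ds$, I would bound the kernel by $t^l/l!$, recognise $\int u_j^2=\|u_j\|_{L^2}^2$, pass from $L^2$ to $L^{2p}$ by Hölder (producing a factor $t^{1-1/p}$), and use Lemma~\ref{holder} with exponent $2p$ to trade $j$ for $j_0$; the residual factorial inequality is $\frac{(2j+1+l)!}{l!((j-j_0)!)^2}\leqslant c^{2j+1+l}$, which follows from standard multinomial estimates.

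Item~3 is the main obstacle, because when $\lfloor j/2\rfloor+1<k$ the natural Hölder estimate produces $\|u_{\lfloor j/2\rfloor+1}\|_{L^p}$ instead of $\|u_k\|_{L^p}$ and Lemma~\ref{holder} cannot add derivatives. The plan splits into two regimes. When $\lfloor j/2\rfloor+1\geqslant k$, I would directly bound $|\xi_{C_{j,l}}(t)|\leqslant\frac{t^l}{l!}\|u_{\lfloor j/2\rfloor+1}\|_{L^p}\|v_{\lfloor(j+1)/2\rfloor}\|_{L^q}$ and push the indices down to $k$ and $k'$ via Lemma~\ref{holder}; note that $j\geqslant k+k'-1$ together with $k'\leqslant k$ gives $\lfloor(j+1)/2\rfloor\geqslant k'$ in every case, so the $v$-factor is always amenable. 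When $\lfloor j/2\rfloor+1<k$, I would apply Lemma~\ref{coordinates-ipp} with $N:=k-\lfloor j/2\rfloor-2$ integrations by parts, after which the remainder integral contains $u_k$ and falls under the previous bound.

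The bookkeeping for the resulting boundary terms is where the indicator in the statement comes from. Boundary contributions at $s=0$ vanish because each iterated primitive of a control vanishes at $0$. For boundary terms at $s=t$, I would expand $(v_{\lfloor(j+1)/2\rfloor}(t-\cdot)^l/l!)^{(\mu)}$ by Leibniz and observe that $\frac{d^{\mu-i}}{ds^{\mu-i}}\frac{(t-s)^l}{l!}$ vanishes at $s=t$ unless $\mu-i=l$; thus surviving terms require $\mu\geqslant l$, which (since $\mu\leqslant N$) requires exactly $l\leqslant k-2-\lfloor j/2\rfloor$, matching the indicator. Each surviving term is (up to a combinatorial constant) $u_{\lfloor j/2\rfloor+\mu+2}(t)\cdot v_{\lfloor(j+1)/2\rfloor-(\mu-l)}(t)$ with index on $u$ in $\llbracket \lfloor j/2\rfloor+2,k\rrbracket\subset\llbracket 1,k\rrbracket$ and (using $j\geqslant k+k'-1$ and $k'\leqslant k$) index on $v$ at least $k'+1$. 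I would then apply Young's inequality $|u_\mu(t)v_\nu(t)|\leqslant\tfrac12|u_\mu(t)|^2+\tfrac12|v_\nu(t)|^2$, and estimate $|v_\nu(t)|^2\lesssim t\|v_{k'}\|_{L^2}^2$ by writing $v_\nu(t)=\int_0^t v_{\nu-1}$, bounding $\|v_{\nu-1}\|_{L^\infty}\leqslant \|v_{k'}\|_{L^1}\leqslant t^{1/2}\|v_{k'}\|_{L^2}$, and using Lemma~\ref{holder}; the extra factor $t^{1/2}$ is precisely what the index gap $\nu\geqslant k'+1$ provides. Collecting everything and absorbing the (finitely many) combinatorial constants into a $K=K(k)$ yields the stated estimate.
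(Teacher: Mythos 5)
Your plan follows the paper's proof essentially step for step: items 1 and 2 by the explicit formulas plus Hölder and Lemma~\ref{holder}, and for item 3 the same two-regime split (direct Hölder when $\lfloor j/2\rfloor+1\geqslant k$, integration by parts via Lemma~\ref{coordinates-ipp} with $N=k-2-\lfloor j/2\rfloor$ otherwise), the same Leibniz analysis of which boundary terms survive, and the same Young's-inequality treatment yielding the indicator. One small slip: the chain $\|v_{\nu-1}\|_{L^\infty}\leqslant\|v_{k'}\|_{L^1}$ is not what Lemma~\ref{holder} gives (it preserves the $L^p$ exponent); you should instead write $|v_\nu(t)|\leqslant\|v_{\nu-1}\|_{L^1}\leqslant\|v_{k'}\|_{L^1}\leqslant t^{1/2}\|v_{k'}\|_{L^2}$ using Lemma~\ref{holder} in $L^1$ (valid since $\nu-1\geqslant k'$ and $t<1$), or simply use Cauchy--Schwarz as the paper does, $|v_\nu(t)|^2\leqslant t\|v_{\nu-1}\|_{L^2}^2$ followed by Lemma~\ref{holder} in $L^2$.
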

	\begin{proof}
		The first two points are proved in \cite[Proposition $3.10$]{beauchard2024unified}. We prove the last one: let $j\geq k+k'-1$, $l\in\nn$, $0<t<1$, $u,v\in L^1((0,t),\rr)$. 
		
		\textit{First, assume that $\lfloor\frac{j}{2}\rfloor+1\geq k$ (this is always the case when $k=k'$)}.
		Using \eqref{egbase-s2good} and Hölder's inequality, we obtain
		$$\left|\xi_{C_{j,l}}(t,(u,v))\right|\leq\frac{t^l}{l!}t^{1-\left(\frac 1p+\frac 1q\right)}\left\|u_{\lfloor\frac j2\rfloor+1}\right\|_{L^p}\left\|v_{\lfloor\frac{j+1}{2}\rfloor}\right\|_{L^q}.$$
		Notice that $\lfloor\frac{j+1}{2}\rfloor\geq\lfloor\frac{k+k'}{2}\rfloor\geq k'$.
		Using two times Lemma \ref{holder} with $j_0=k$ and with $j_0=k'$, we get
		$$\left|\xi_{C_{j,l}}(t,(u,v))\right|\leq\frac{t^l}{l!}\frac{t^{\lfloor\frac{j}{2}\rfloor+1-k}}{(\lfloor\frac{j}{2}\rfloor+1-k)!}\frac{t^{\lfloor\frac{j+1}{2}\rfloor-k'}}{(\lfloor\frac{j+1}{2}\rfloor-k')!}t^{1-\left(\frac 1p+\frac 1q\right)}\left\|u_k\right\|_{L^p}\left\|v_{k'}\right\|_{L^q}.$$
		As $\lfloor\frac{j+1}{2}\rfloor+\lfloor\frac{j}{2}\rfloor=j$, we obtain the result because, for all $j\geq k+k'-1$, $l\in\nn$, 
		\begin{align*}\frac{1}{l!(\lfloor\frac{j}{2}\rfloor+1-k)!(\lfloor\frac{j+1}{2}\rfloor-k')!}&=\binom{l+j+2}{l}\binom{j+2}{\lfloor\frac{j}{2}\rfloor+1-k}\binom{\lfloor\frac{j+1}{2}\rfloor+k+1}{\lfloor\frac{j+1}{2}\rfloor-k'}\frac{(k+k'+1)!}{(l+j+2)!}\\&\leq \frac{2^{3|C_{j,l}|}(k+k'+1)!}{(l+j+2)!},\end{align*}
		where $|C_{j,l}|=j+l+2$.

		\textit{Now, we assume that $\lfloor\frac{j}{2}\rfloor\leq k-2$}. Let $N:=k-2-\lfloor\frac{j}{2}\rfloor\geq 0$. Note that $N\leq\lfloor\frac{j+1}{2}\rfloor-1$. We use Lemma \ref{coordinates-ipp} to obtain $\xi_{C_{j,l}}(t,(u,v)=A+B$ with $A$ the boundary terms and $B$ the integral part. Using Leibniz formula, we get
		$$\left(v_{\lfloor\frac{j+1}{2}\rfloor}\frac{(t-\cdot)^l}{l!}\right)^{(\mu)}(t)=\left\lbrace\begin{array}{cc}0&\text{if }\mu<l\\\displaystyle\binom{\mu}{l}(-1)^lv_{\lfloor\frac{j+1}{2}\rfloor+l-\mu}(t)&\text{otherwise}\end{array}\right..$$
		Consequently, the following inequality holds
		$$|A|\leq\mathbb{1}_{l\leq N}\sum_{\mu=l}^N\binom{\mu}{l}\left|u_{\lfloor\frac{j}{2}\rfloor+\mu+2}(t)v_{\lfloor\frac{j+1}{2}\rfloor+l-\mu}(t)\right|.$$ 
		Then, using Young's and Cauchy--Schwarz's inequality,
		$$|A|\leq\mathbb{1}_{l\leq N}2^{N-1}\left(\sum_{\mu=1}^k\left|u_{\mu}(t)\right|^2 +t\underset{\mu\in\llbracket l,N\rrbracket}\max\left\|v_{\lfloor\frac{j+1}{2}\rfloor+l-\mu-1}\right\|_{L^2}^2\right),$$
		as $1\leq \lfloor\frac{j}{2}\rfloor+\mu+2\leq k$, for $\mu\in\llbracket 1,N\rrbracket$. Note that $\lfloor\frac{j+1}{2}\rfloor+l-\mu-1\geq k'$. Finally, applying Lemma \ref{holder} with $p=2$ and $j_0=k'$ and using $t\in(0,1)$, we obtain the following estimate
		\begin{equation}\label{coordinates-ipp-a}|A|\leq\mathbb{1}_{l\leq N}2^{k-3}\left(\sum_{\mu=1}^k\left|u_{\mu}(t)\right|^2 +t\left\|v_{k'}\right\|_{L^2}^2\right).\end{equation}
		We finally estimate $B$. Using Leibniz formula and Hölder's inequality, we get
		$$|B|\leq \sum_{\mu=0}^{\min(N+1,l)}\binom{N+1}{\mu}\frac{t^{l-\mu}}{(l-\mu)!}t^{1-\left(\frac1p+\frac 1q\right)}\left\|u_k\right\|_{L^p}\left\|v_{j+\mu+1-k}\right\|_{L^q}.$$
		Using Lemma \ref{holder} with $p=q$ and $j_0=k'$, we have
		\begin{equation}\label{coordinates-ipp-b}
			|B|\leq \frac{(2^kt)^{|C_{j,l}|}}{l!}t^{-(1+k+k')}t^{1-\left(\frac 1p+\frac 1q\right)}\left\|u_k\right\|_{L^p}\left\|v_{k'}\right\|_{L^q}.
		\end{equation}
		Finally, for all $l\geq 0$, for all $j\in\llbracket 0,2k-3\rrbracket$, 
		\begin{equation}\label{coordinates-ipp-c}\frac{1}{l!}= \frac{(j+2)!}{(j+l+2)!}\binom{j+l+2}{l}\leq \frac{(2k-1)!}{(j+l+2)!}2^{j+l+2}\leq \frac{(2(2k-1)!)^{|C_{j,l}|}}{(j+l+2)!}.\end{equation}
		Thus,  equations \eqref{coordinates-ipp-a}, \eqref{coordinates-ipp-b} and \eqref{coordinates-ipp-c} lead to the desired inequality.
	\end{proof}
	\subsection{Analytic norms}
	The following paragraph is inspired by \cite[Section $4.1$]{beauchard2024unified}. We introduce some basic notions about analytic vector fields and norms of analytic vector fields. These will be useful for ensuring the convergence of the series that we will consider in the following sections.
	\begin{defi}[Length and factorial of a multi-index, partial derivative]Let $d\in\nn^*$ be a positive integer and $\alpha=(\alpha_1,\cdots,\alpha_d)\in\nn^d$ be a multi-index. We define
		\begin{enumerate}
			\item the length of $\alpha$ as: $|\alpha|:=\alpha_1+\cdots+\alpha_d$,
			\item the factorial of $\alpha$ as: $\alpha!:=\alpha_1!\times\cdots\times\alpha_d!$,
			\item  the partial derivative: $\partial_{\alpha}:= \partial_{x_1}^{\alpha_1}\cdots\partial_{x_d}^{\alpha_d}$.
		\end{enumerate} 
	\end{defi}
	\begin{defi}[Analytic vector fields, analytic norms] Let $\delta >0$ and $\overline{B}_{\delta}$ be the closed ball of radius $\delta$, centered at $0\in\rr^d$. For $r>0$, we define $C^{\omega,r}\left(\overline{B}_{\delta};\rr^d\right)$ as the subspace of analytic vector fields $f$ on an open neighborhood of $\overline{B}_{\delta}$, for which the following norm is finite
		$$|||f|||_r:=\sum_{i=1}^d\sum_{\alpha\in\nn^d}\frac{r^{|\alpha|}}{\alpha!}\left\|\partial^{\alpha}f_i\right\|_{L^{\infty}\left(\overline{B}_{\delta}\right)}.$$
	\end{defi}
	\subsection{An approximate formula for the state, of Magnus-type}\label{black-box}
	This article relies on an approximate representation formula for the state, involving Lie brackets. The goal of the following proposition is to introduce this formula.
	\begin{prop}[Magnus formula] \label{magnus} There exists a unique family of functionals $(\eta_b:\rr^+\times L^1_{\mathrm{loc}}(\rr^+,\rr)^2\to\rr)_{b\in\mathcal{B}}$, called "coordinates of the pseudo-first kind", satisfying the following.
		Let $M\in\nn^*$, $\delta,T>0$, $f_0, f_1, f_2:B(0,2\delta)\to\rr^d$ be analytic vector fields with $f_0(0)=0$ and $T\left\|f_0\right\|_{\infty}\leq\delta$. For $u,v\in L^1((0,T),\rr)$, as $\|(u,v)\|_{L^1} \to 0$,
		\begin{equation} \label{x=Z2+O}
			x(t;(u,v))= \mathcal{Z}_M(t;f,(u,v))(0) +\mathcal{O}\left(  \|(u,v)\|_{L^1}^{M+1}  + \left|x(t;u,v)\right|^{1+\frac{1}{M}}   \right),
		\end{equation}
		where 
		\begin{equation}\label{defzm}\mathcal{Z}_M(t;f,(u,v))(0) =\sum_{b \in \B_{\llbracket 1 ,M \rrbracket}} \eta_b(t,(u,v)) f_b(0).\end{equation}
	\end{prop}
		This proposition stems from \cite[Proposition $161$]{Beauchard_2023}. The ideas of proof are recalled in \cite[Appendix A.$3.2$]{gherdaoui}. The interested reader may find more information on the family $\left(\eta_b:\rr^+\times L^1_{\mathrm{loc}}(\rr^+,\rr)^2\to\rr\right)_{b\in\mathcal{B}}$ in \cite[Proposition $44$]{Beauchard_2023}. 
	\subsection{A black-box estimate}\label{sectionbb}
	Here are a few definitions and notations.
	\begin{defi}[Support]
		Let $\B$ be a Hall set of $\Br(X)$ and $a\in\mathcal{L}(X)$. For $b\in\mathcal{B}$, we denote by $\langle a, b\rangle$ the coefficient of $\ev(b)$ in the expansion of $a$ on the basis $\ev(\mathcal{B})$. We define
		$$\text{supp}(a) := \{b\in\mathcal{B}, \ \langle a,b\rangle\neq 0\}.$$  For $a\in\Br(X)$, $\text{supp}(a):=\text{supp}(\ev(a))$. If $A \subset\Br(X)$, we let supp$(A) := \bigcup_{a\in A}\text{supp}(a)$. 
	\end{defi}
	With this definition, we have, for $a\in\mathcal{L}(X)$, $a=\displaystyle\sum_{b\in\text{supp}(a)}\langle a,b\rangle \ev(b)$.
	\begin{defi}[$\mathcal{F}$]\label{deff}
		Given $q\geq 2$ and $b_1,\cdots,b_q\in\Br(X)$, we define $\mathcal{F}(b_1,\cdots,b_q)$ as the subset of $\Br(X)$ of brackets of $b_1,\cdots, b_q$ involving each of these elements exactly once.
	\end{defi}
	\begin{exs}For $q=2$ and $b_1,b_2\in\Br(X)$, one has $\mathcal{F}(b_1,b_2)=\{(b_1, b_2), (b_2, b_1)\}.$
	For $q=3$ and $b_1,b_2,b_3\in\Br(X)$, one has 
	$$\mathcal{F}(b_1,b_2,b_3)=\left\lbrace((b_{\sigma(1)}, b_{\sigma(2)}),b_{\sigma(3)}), (b_{\sigma(1)}, (b_{\sigma(2)},b_{\sigma(3)})); \ \sigma\in\mathfrak{S}_3\right\rbrace.$$\end{exs}
	To put the strategy described in Section \ref{heuristicstrat} into practice  \textit{i.e.}\ to extract the dominant terms from $\mathcal{Z}_M(t;f,(u,v))(0)$, we will use the following propositions. They guarantee the convergence of the series involved and legitimize the heuristic. This part of the article is based on \cite[Section $4.4$]{beauchard2024unified}. The following proposition allows us to estimate the second term in the right-hand side of \eqref{magnus-heuristique2}.
	\begin{prop}[Estimate of main terms]\label{estim-mainterm} Let $M,L\in\nn^*$. Let $\mathcal{E}\subset\mathcal{B}_{\llbracket 1,M\rrbracket}$. Assume that there
		exist $c>0$ and a functional $\Xi : \rr_+^*\times L^1_{\mathrm{loc}}(\rr^+,\rr)^2\to\rr^+$ such that the following holds:
		for all $b\in\mathcal{E}$, there exists an exponent $\sigma\leq\min(L, |b|)$, such that, for all $t > 0$ and $u,v\in L^1((0,t),\rr)$,
		\begin{equation}\label{bbest1}\left|\xi_b(t,(u,v))\right|\leq \frac{(ct)^{|b|}}{|b|!}t^{-\sigma}\Xi(t,(u,v)).\end{equation}
		Let $\delta,r > 0$ and $f_0,f_1,f_2\in\mathcal{C}^{\omega,r}\left(\overline{B}_{\delta},\rr^d\right)$ be analytic vector fields. Then, for any $r'\in[r/e,r)$, as $(t,\left\|(u,v)\right\|_{L^1})\to 0$,
		$$\sum_{b\in\mathcal{E}}|||\xi_b(t,(u,v))f_b|||_{r'}=\mathcal{O}\left(\Xi(t,(u,v))\right).$$
	\end{prop}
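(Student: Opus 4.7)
The plan is to combine the given scalar estimate on $\xi_b$ with an analytic norm estimate on the iterated Lie brackets $f_b$, and then sum the resulting bound over the finite set $\mathcal{E}\subseteq\mathcal{B}_{\llbracket 1,M\rrbracket}$.

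The first step, which I expect to be the main obstacle, is an analytic norm estimate: for $r'\in[r/e,r)$, there exists a constant $K=K(r,r',f_0,f_1,f_2)>0$ such that
$$|||f_b|||_{r'}\leqslant |b|!\,K^{|b|}\qquad\text{for every }b\in\mathcal{B}.$$
This is obtained from the Leibniz-type inequality $|||[f,g]|||_{r_2}\leqslant\frac{C}{r_1-r_2}|||f|||_{r_1}\cdot|||g|||_{r_1}$ applied iteratively along a geometric decreasing sequence of radii $r=r_0>r_1>\cdots>r_{|b|}=r'$. The choice $r_k=r\exp(-k/|b|)$ (which is available precisely when $r'\geqslant r/e$) keeps $r_{k-1}-r_k\sim r_k/|b|$, producing a factor of order $|b|^{|b|-1}$; by Stirling's formula this gives the factorial growth above. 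Such an estimate is already developed in the companion paper \cite[Section $4.1$]{beauchard2024unified}, so it can be quoted rather than reproved.

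Next, I would combine this with the hypothesis of the proposition. For any $b\in\mathcal{E}$, $t\in(0,1]$ and $u,v\in L^1((0,t),\rr)$,
$$|||\xi_b(t,(u,v))\,f_b|||_{r'}\leqslant\frac{(ct)^{|b|}}{|b|!}\,t^{-\sigma}\,\Xi(t,(u,v))\cdot|b|!\,K^{|b|}=(cK)^{|b|}\,t^{|b|-\sigma}\,\Xi(t,(u,v)).$$
Since the hypothesis furnishes $\sigma\leqslant|b|$, we have $|b|-\sigma\geqslant 0$, so $t^{|b|-\sigma}\leqslant 1$ for $t\leqslant 1$; this is exactly where the constraint $\sigma\leqslant|b|$ (and not merely $\sigma\leqslant L$) is essential. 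Hence
$$|||\xi_b(t,(u,v))\,f_b|||_{r'}\leqslant(cK)^{|b|}\,\Xi(t,(u,v)).$$

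Finally, I would sum over the finite set $\mathcal{E}\subseteq\mathcal{B}_{\llbracket 1,M\rrbracket}$. Since $M$ is fixed and $|\mathcal{B}_n|\leqslant 3^n/n$ (Witt's formula),
$$\sum_{b\in\mathcal{E}}|||\xi_b(t,(u,v))\,f_b|||_{r'}\leqslant\Bigl(\sum_{n=1}^{M}|\mathcal{B}_n|\,(cK)^{n}\Bigr)\Xi(t,(u,v))=C\,\Xi(t,(u,v)),$$
with $C<\infty$ independent of $t,u,v$. This yields the claimed $\mathcal{O}(\Xi)$ bound, uniformly as $(t,\|(u,v)\|_{L^1})\to 0$ (indeed for all $t\leqslant 1$).
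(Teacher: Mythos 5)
Your proof has a genuine gap, and it stems from a confusion between the length $|b|$ and the control-degree $n(b)$. The index set $\mathcal{B}_{\llbracket 1,M\rrbracket}$ is \emph{not} finite: it is the set of Hall brackets $b$ with $1\leqslant n(b)\leqslant M$, where $n(b)=n_1(b)+n_2(b)$, and the number of occurrences $n_0(b)$ of $X_0$ is completely unconstrained. For instance $\mathcal{B}_1=\{X_10^j, X_20^j;\ j\in\nn\}$ is already infinite. The cardinality bound $|\mathcal{B}_n|\leqslant 3^n/n$ you quote is Witt's formula for the number of basis elements of \emph{length} $n$, not of control-degree $n$; it does not apply to the layers $\mathcal{B}_n$ as defined in this paper. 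Consequently your final step --- ``sum over the finite set $\mathcal{E}$'' --- is not available, and in fact the series $\sum_{b\in\mathcal{E}}(cK)^{|b|}$ you would be left with need not converge if $cK\geqslant 1$.

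This also explains why your argument never uses the parameter $L$: if $\mathcal{E}$ were finite, the hypothesis $\sigma\leqslant\min(L,|b|)$ could simply be $\sigma\leqslant|b|$ and $L$ would be irrelevant, which is a warning sign. The role of $L$ is precisely to make the infinite tail summable. The correct argument, after establishing $|||f_b|||_{r'}\leqslant|b|!\,K^{|b|}$ as you do, splits the sum. For the finitely many $b\in\mathcal{E}$ with $|b|\leqslant L$, use $\sigma\leqslant|b|$ so that $t^{|b|-\sigma}\leqslant 1$ (for $t\leqslant 1$), giving a bound by a finite constant times $\Xi$. For $|b|>L$, use instead $\sigma\leqslant L$ to get $t^{|b|-\sigma}\leqslant t^{|b|-L}$, so each term is bounded by $(cK)^L(cKt)^{|b|-L}\Xi(t,(u,v))$; since the number of Hall brackets of length $n$ is at most $3^n$, the tail is dominated by a geometric series in $3cKt$, which converges and is $\mathcal{O}(1)$ (indeed $o(1)$) once $t$ is small enough. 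This is the mechanism that produces the $\mathcal{O}(\Xi)$ bound, and it is the part your proposal is missing.

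Your first step, the estimate $|||f_b|||_{r'}\leqslant|b|!\,K^{|b|}$ via iterated shrinking of the analyticity radius and the observation that $r'\geqslant r/e$ is what allows a choice of radii with geometric decay, is correct and matches the ingredient used in \cite[Section 4.1 and Appendix A.5]{beauchard2024unified}, to which the paper defers for the full proof.
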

	The following proposition allows us to estimate the third term in the right-hand side of \eqref{magnus-heuristique2}.
	\begin{prop}[Estimate of cross terms]\label{estim-crossterm} Let $M,L\in•\nn^*$. Let $\mathcal{E}\subset\mathcal{B}_{\llbracket 1,M\rrbracket}$. Assume that there exist $c > 0$ and a functional $\Xi: \rr^*_+\times L^1_{\mathrm{loc}}(\rr^+,\rr)^2\to\rr^+$ with $\Xi(t,(u,v))=\mathcal{O}(1)$ such that the following holds:
		for all $q\geq 2$, $b_1\geq\cdots\geq b_q\in\mathcal{B}\setminus\left\lbrace X_0\right\rbrace$ such that $\text{supp}\mathcal{F}(b_1,\cdots,b_q)\cap\mathcal{E}\neq\emptyset$, there exist $\sigma_1,\cdots,\sigma_q\leq L$ with $\sigma_i\leq|b_i|$ and $(\alpha_1,\cdots,\alpha_q)\in[0,1]^q$ with $\alpha_1+\cdots+\alpha_q\geq 1$ such that,  for all $t > 0$ and $u,v\in L^1((0,t),\rr)$,
		\begin{equation}\label{bbest2}\left|\xi_{b_i}(t,(u,v))\right|\leq\frac{(ct)^{|b_i|}}{|b_i|!}t^{-\sigma_i}\left(\Xi(t,(u,v))\right)^{\alpha_i}.\end{equation}
		Let $\delta,r > 0$ and $f_0,f_1,f_2\in\mathcal{C}^{\omega,r}\left(\overline{{B}}_{\delta},\rr^d\right)$ be analytic vector fields. Then, for any $r'\in[r/e,r)$, as $(t,\left\|(u,v)\right\|_{L^1})\to 0$,
		$$\sum_{b\in\mathcal{E}}|||\left(\eta_b-\xi_b\right)(t,(u,v))f_b|||_{r'}=\mathcal{O}\left(\Xi(t,(u,v))\right).$$
	\end{prop}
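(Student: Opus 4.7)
I would follow the blueprint of \cite[Section 4.4]{beauchard2024unified}, adapted from one control to two. The starting point is the algebraic identity on the free Lie algebra that relates the coordinates of the first kind $\eta_b$ to those of the second kind $\xi_b$: the difference $\eta_b - \xi_b$ admits a finite expansion as a linear combination of products $\xi_{b_1}(t,(u,v))\cdots \xi_{b_q}(t,(u,v))$ with $q\geqslant 2$, each $b_i\in\mathcal{B}\setminus\{X_0\}$ of length strictly smaller than $|b|$, and $b\in\mathrm{supp}\,\mathcal{F}(b_1,\dots,b_q)$. This identity is purely combinatorial and is the same one used in the single-input case, since it lives at the level of the free Lie algebra on three letters; only the number of generators changes.

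\textbf{Step 1: rewrite the sum of cross terms.} Using that identity, I would swap the order of summation to get
\begin{equation*}
  \sum_{b\in\mathcal{E}} (\eta_b - \xi_b)(t,(u,v))\, f_b
  \;=\; \sum_{q\geqslant 2} \sum_{b_1\geqslant\cdots\geqslant b_q}
  \xi_{b_1}\cdots\xi_{b_q}\cdot\Pi_{\mathcal{E}}(b_1,\dots,b_q),
\end{equation*}
where $\Pi_{\mathcal{E}}(b_1,\dots,b_q)$ is a Lie polynomial in $f_{b_1},\dots,f_{b_q}$, supported on $\mathrm{supp}\,\mathcal{F}(b_1,\dots,b_q)\cap\mathcal{E}$, and whose structure coefficients are bounded independently of the $b_i$.

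\textbf{Step 2: apply the hypothesis factorwise and extract a $\Xi$.} By assumption \eqref{bbest2},
\begin{equation*}
  \prod_{i=1}^q |\xi_{b_i}(t,(u,v))|
  \;\leqslant\; \prod_{i=1}^q \frac{(ct)^{|b_i|}}{|b_i|!}\,t^{-\sigma_i}\,\Xi(t,(u,v))^{\alpha_i}.
\end{equation*}
Since $\sigma_i\leqslant |b_i|$, the product of time factors is bounded by $(ct)^{\sum |b_i| - \sum \sigma_i}$ with nonnegative exponent, and since $\Xi=\mathcal{O}(1)$ and $\alpha_1+\cdots+\alpha_q\geqslant 1$, we have $\Xi^{\alpha_1+\cdots+\alpha_q}\leqslant \Xi$ for $(t,\|(u,v)\|_{L^1})$ small enough. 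This is where the assumption $\alpha_1+\cdots+\alpha_q\geqslant 1$ plays its essential role: it singles out one full factor of $\Xi$, and the remaining $\Xi^{\alpha-1}$ is harmless.

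\textbf{Step 3: analytic norm estimates and convergence.} The Lie polynomial $\Pi_{\mathcal{E}}(b_1,\dots,b_q)$, being a sum of iterated brackets of $f_{b_1},\dots,f_{b_q}$, satisfies an analytic-norm estimate of the form
\begin{equation*}
  |||\Pi_{\mathcal{E}}(b_1,\dots,b_q)|||_{r'}
  \;\leqslant\; C\, q!\,\prod_{i=1}^q |||f_{b_i}|||_{r'},
\end{equation*}
up to a structural combinatorial factor. Combined with the classical bound $|||f_{b_i}|||_{r'}\lesssim |b_i|!\,(r-r')^{-|b_i|}|||f|||_r^{|b_i|}$ for $r'<r$, the $|b_i|!$ cancels against the $1/|b_i|!$ already present, leaving a geometric series in $ct/(r-r')$. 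Choosing $r'\in[r/e,r)$ gives precisely the margin needed to absorb the exponential blow-up of the number of brackets of length $n$, which grows like $n!/n$, and to make the outer sum over $q$ also convergent (one recognizes the standard Magnus-type radius of convergence).

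\textbf{Main obstacle.} The only delicate point is the combinatorial convergence of the double sum over $q$ and over tuples $(b_1,\dots,b_q)$ with $\mathrm{supp}\,\mathcal{F}(b_1,\dots,b_q)\cap\mathcal{E}\neq\emptyset$. The exponential growth of $|\Br(X)_n|$ and of the cardinality of $\mathcal{F}(b_1,\dots,b_q)$ must be beaten by the factorial denominators and the factor $(ct/(r-r'))^{|b_i|}$. This is the content, in the single-input case, of the estimates behind \cite[Section 4.4]{beauchard2024unified}; the same counting arguments go through verbatim here since the algebraic structure of $\mathcal{L}(X)$ for $|X|=3$ is no different from that for $|X|=2$ beyond a harmless increase in the branching factor, which is already absorbed by the choice $r'\geqslant r/e$.
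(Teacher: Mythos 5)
The paper does not give a proof of this proposition: it simply states that both Propositions 2.19 and 2.20 ``are proved in \cite[Appendix A.$5$]{beauchard2024unified}'' and treats them as a black box, since the only change from the single-input setting is replacing $\mathcal{L}(\{X_0,X_1\})$ by $\mathcal{L}(\{X_0,X_1,X_2\})$. Your plan is a faithful reconstruction of exactly that argument: the Sussmann/BCH-type expansion of $\eta_b-\xi_b$ into products $\xi_{b_1}\cdots\xi_{b_q}$ with $q\geqslant 2$, the factorwise application of \eqref{bbest2} with $\sigma_i\leqslant|b_i|$ (positive time exponent) and $\alpha_1+\cdots+\alpha_q\geqslant 1$, $\Xi=\mathcal{O}(1)$ (one full power of $\Xi$), and the analytic-norm machinery with $r'\in[r/e,r)$ to sum the series — so this is essentially the same approach as the cited proof, and your remark that the adaptation to three generators is immediate is precisely the point the paper is implicitly relying on.

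Two small cautions if you were to write this out in full. First, the structural bound you write as $|||\Pi_{\mathcal{E}}(b_1,\dots,b_q)|||_{r'}\leqslant Cq!\prod_i|||f_{b_i}|||_{r'}$ is heuristic; the version actually used in \cite{beauchard2024unified} bounds iterated brackets by working with a family of shrinking radii $r>r_1>\cdots>r_q\geqslant r'$ and a loss of $(r_j-r_{j+1})^{-1}$ at each step, and the $q!$ is then produced and absorbed carefully, so the geometric-series claim needs the precise constants from that appendix rather than the slogan. Second, the identity relating $\eta_b$ and $\xi_b$ requires the $b_i$ to be taken in decreasing Hall order with the support condition $b\in\text{supp}\,\mathcal F(b_1,\dots,b_q)$, and the hypothesis \eqref{bbest2} is only assumed when $\text{supp}\,\mathcal F(b_1,\dots,b_q)\cap\mathcal E\neq\emptyset$; after swapping the order of summation one must check that every tuple that actually contributes to some $b\in\mathcal E$ indeed satisfies this intersection condition, which is automatic but worth stating. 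Neither of these is a gap in the idea, only in the level of detail.
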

	The two previous propositions are proved in \cite[Appendix A.$5$]{beauchard2024unified}. The following corollary is a direct consequence of \eqref{defzm} and the two previous propositions. This clarifies several steps of the heuristic -- see Section \ref{heuristicstrat}.
	\begin{crl}\label{bb}
		Let $M, L,r\in\nn^*$. Let $\mathfrak{b}_1,\cdots,\mathfrak{b}_r\in\mathcal{B}_{\llbracket 1,M\rrbracket}$ and $\mathcal{N}\subset\mathcal{B}_{\llbracket 1,M\rrbracket}$. Assume that there exist $c > 0$ and a functional $\Xi : \rr_+^*\times L^1_{\mathrm{loc}}(\rr^+,\rr)^2\to\rr^+$ with $\Xi(t,(u,v)) =\mathcal{O}(1)$ such that
		\begin{enumerate}
			\item the assumptions of Proposition \ref{estim-mainterm} hold for $\mathcal{E}=\mathcal{B}_{\llbracket 1,M\rrbracket }\setminus\left(\mathcal{N}\cup\{\mathfrak{b}_1,\cdots,\mathfrak{b}_r\}\right)$, 
			\item the assumptions of Proposition \ref{estim-crossterm} hold for $\mathcal{E}=\mathcal{B}_{\llbracket 1,M\rrbracket }\setminus\mathcal{N}$.
		\end{enumerate}
		Let $f_0,f_1,f_2$ be analytic vector fields over $\rr^d$. If $\mathcal{P}$ is linear form such that $\restriction{\mathcal{P}}{\mathcal{N}(f)(0)}\equiv 0$, as $(t,\left\|(u,v)\right\|_{L^1})\to 0$,
		\begin{equation}\label{bbconclusion}\mathcal{P}\mathcal{Z}_M(t;f,(u,v))(0)=\sum_{i=1}^r\xi_{\mathfrak{b_i}}(t,(u,v))\mathcal{P}\left(f_{\mathfrak{b}_i}(0)\right)+\mathcal{O}\left(\Xi(t,(u,v))\right).\end{equation}
	\end{crl}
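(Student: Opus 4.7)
The plan is to apply the linear form $\mathcal{P}$ to the Magnus expansion \eqref{defzm} and decompose the result into three pieces, each controlled by one of the three available tools. By linearity,
$$\mathcal{P}\mathcal{Z}_M(t;f,(u,v))(0)=\sum_{b\in\mathcal{B}_{\llbracket 1,M\rrbracket}}\eta_b(t,(u,v))\,\mathcal{P}(f_b(0)).$$
The hypothesis $\restriction{\mathcal{P}}{\mathcal{N}(f)(0)}\equiv 0$ kills every term with $b\in\mathcal{N}$, so the sum reduces to an index set $\mathcal{B}_{\llbracket 1,M\rrbracket}\setminus\mathcal{N}$. For each remaining $b$, I would write $\eta_b=\xi_b+(\eta_b-\xi_b)$ and split off the distinguished indices $b=\mathfrak{b}_i$. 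This isolates the target explicit sum $\sum_{i=1}^r\xi_{\mathfrak{b}_i}\mathcal{P}(f_{\mathfrak{b}_i}(0))$, together with a ``non-dominant $\xi$-remainder'' indexed by $\mathcal{B}_{\llbracket 1,M\rrbracket}\setminus(\mathcal{N}\cup\{\mathfrak{b}_1,\ldots,\mathfrak{b}_r\})$ and a ``cross remainder'' of shape $\sum(\eta_b-\xi_b)\mathcal{P}(f_b(0))$ indexed by $\mathcal{B}_{\llbracket 1,M\rrbracket}\setminus\mathcal{N}$.

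Next I would bound both remainders using the elementary observation that for any linear form $\mathcal{P}$ on $\rr^d$ and any analytic vector field $g\in\mathcal{C}^{\omega,r'}(\overline{B}_{\delta},\rr^d)$, one has $|\mathcal{P}(g(0))|\leqslant C_{\mathcal{P}}|||g|||_{r'}$, since the multi-index $\alpha=0$ contribution to $|||g|||_{r'}$ already majorizes $\|g(0)\|$. Termwise application then yields the bounds $C_{\mathcal{P}}\sum|||\xi_b f_b|||_{r'}$ and $C_{\mathcal{P}}\sum|||(\eta_b-\xi_b)f_b|||_{r'}$ for the two remainders, respectively. Choosing any $r'\in[r/e,r)$, assumption (1) of the corollary lets me invoke Proposition \ref{estim-mainterm} on $\mathcal{E}=\mathcal{B}_{\llbracket 1,M\rrbracket}\setminus(\mathcal{N}\cup\{\mathfrak{b}_1,\ldots,\mathfrak{b}_r\})$ to bound the first remainder by $\mathcal{O}(\Xi(t,(u,v)))$, while assumption (2) lets me invoke Proposition \ref{estim-crossterm} on $\mathcal{E}=\mathcal{B}_{\llbracket 1,M\rrbracket}\setminus\mathcal{N}$ to bound the second by $\mathcal{O}(\Xi(t,(u,v)))$. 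Combining these with the explicit main sum yields exactly \eqref{bbconclusion}.

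There is no genuine technical obstacle: the corollary is a bookkeeping statement repackaging the two deep estimates of Section \ref{sectionbb} into the form required by the heuristic of Section \ref{heuristicstrat}. The only points to watch are the alignment of the three subsets of brackets (those in $\mathcal{N}$, the distinguished $\mathfrak{b}_i$'s, and the rest) with the three respective tools (vanishing of $\mathcal{P}$ on $\mathcal{N}(f)(0)$, Proposition \ref{estim-mainterm}, Proposition \ref{estim-crossterm}), and the parsing of the set in assumption (1) as $\mathcal{B}_{\llbracket 1,M\rrbracket}\setminus(\mathcal{N}\cup\{\mathfrak{b}_1,\ldots,\mathfrak{b}_r\})$, so that its complement inside $\mathcal{B}_{\llbracket 1,M\rrbracket}\setminus\mathcal{N}$ is exactly the set of distinguished brackets kept explicit in the conclusion.
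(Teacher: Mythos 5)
Your proposal is correct and spells out precisely the decomposition the paper has in mind: kill the $\mathcal{N}$-terms via $\restriction{\mathcal{P}}{\mathcal{N}(f)(0)}\equiv 0$, split $\eta_b=\xi_b+(\eta_b-\xi_b)$, isolate the $\mathfrak{b}_i$ contributions, and bound the two remainders with Propositions \ref{estim-mainterm} and \ref{estim-crossterm} after reducing $|\mathcal{P}(g(0))|$ to $|||g|||_{r'}$. The paper itself dismisses the proof as a "direct consequence" of \eqref{defzm} and the two propositions, and your write-up is exactly that direct consequence, including the correct parsing of the set in assumption (1) as $\mathcal{B}_{\llbracket 1,M\rrbracket}\setminus(\mathcal{N}\cup\{\mathfrak{b}_1,\ldots,\mathfrak{b}_r\})$.
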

	\subsection{Interpolation inequalities}
	We recall the Gagliardo--Nirenberg interpolation inequalities used in this article and proved in \cite{zbMATH03318089,ASNSP_1959_3_13_2_115_0}.
	\begin{prop}[Gagliardo--Nirenberg inequalities]
		Let $P,q,r,s\in[1,+\infty]$, $ j<l\in\nn$ and $\alpha\in (0,1)$ be such that 
		$$\frac{j}{l}\leq \alpha\quad{and }\quad \frac{1}{P}=j+\left(\frac 1r-l\right)\alpha+\frac{1-\alpha}{q}.$$
		There exists $C>0$ such that, for every $t>0$ and $\ph\in\mathcal{C}^{\infty}([0,t],\rr)$, 
		\begin{equation}\label{gn}\left\|D^j\ph\right\|_{L^P}\leq C\left\| D^l\ph\right\|_{L^r}^{\alpha}\left\|\ph\right\|_{L^q}^{1-\alpha}+Ct^{\frac{1}{P}-j-\frac 1s}\left\|\ph\right\|_{L^s}.
		\end{equation}
	\end{prop}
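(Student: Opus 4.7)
The plan is to prove the inequality on the reference interval $[0,1]$ by extension to the real line, and then transfer it to the general interval $[0,t]$ by a scaling argument. For $\phi\in\mathcal{C}^\infty([0,t],\rr)$, setting $\psi(y):=\phi(ty)$ for $y\in[0,1]$, a straightforward change of variables yields the identity
$$\|D^k\phi\|_{L^p(0,t)} = t^{\frac{1}{p}-k}\|D^k\psi\|_{L^p(0,1)}$$
for every $k\in\nn$ and $p\in[1,+\infty]$. Substituting these scalings into the target estimate and invoking the hypothesis $\frac{1}{P}=j+(\frac{1}{r}-l)\alpha+\frac{1-\alpha}{q}$ — which is precisely the relation that renders the first right-hand side term scale-invariant — reduces the claim to the case $t=1$ with a constant independent of $\phi$, while the prefactor $t^{\frac{1}{P}-j-\frac{1}{s}}$ emerges automatically from the scaling of the $L^s$ norm of $\phi$ in the second term.

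For the reference case $t=1$, I would invoke a continuous linear extension operator $E : W^{l,r}(0,1)\to W^{l,r}(\rr)$ producing compactly supported extensions uniformly bounded in every Sobolev norm of order at most $l$. The classical Gagliardo--Nirenberg inequality on $\rr$ (\cite{zbMATH03318089,ASNSP_1959_3_13_2_115_0}) applied to $E\phi$ then gives
$$\|D^j\phi\|_{L^P(0,1)}\leqslant \|D^j E\phi\|_{L^P(\rr)}\leqslant C\|D^l E\phi\|_{L^r(\rr)}^\alpha\|E\phi\|_{L^q(\rr)}^{1-\alpha}\leqslant C\|\phi\|_{W^{l,r}(0,1)}^\alpha\|\phi\|_{L^q(0,1)}^{1-\alpha}.$$
The elementary subadditivity $(a+b)^\alpha\leqslant a^\alpha+b^\alpha$ applied to the first factor, combined with iterated one-dimensional Sobolev embeddings on the bounded interval $(0,1)$ to control intermediate derivatives, splits the right-hand side into the principal contribution $\|D^l\phi\|_{L^r}^\alpha\|\phi\|_{L^q}^{1-\alpha}$ and a lower-order remainder controlled by $\|\phi\|_{L^{s'}(0,1)}$ for some specific $s'$; since $(0,1)$ is bounded, Hölder's inequality then allows one to replace $L^{s'}$ by $L^s$ for any prescribed $s\in[1,+\infty]$.

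The main obstacle is the flexibility of the correction term in $s$. The natural extension-based argument yields a remainder involving a specific $L^{s'}$-norm, and converting it to an arbitrary $\|\phi\|_{L^s}$ requires careful bookkeeping with Hölder's inequality on the bounded interval, together with the condition $\frac{j}{l}\leqslant\alpha$ to ensure that the intermediate interpolation steps are admissible. Once the reference case is established in the desired form, the rescaling $\phi(\cdot)=\psi(\cdot/t)$ mechanically delivers the announced exponent $t^{\frac{1}{P}-j-\frac{1}{s}}$ in the second term and concludes the argument. Since the result is classical, I would not reprove the Gagliardo--Nirenberg inequality on $\rr$ from first principles but would invoke the formulations found in \cite{zbMATH03318089,ASNSP_1959_3_13_2_115_0}.
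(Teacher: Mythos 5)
The paper does not actually prove this proposition: it is stated as a recall of the classical Gagliardo--Nirenberg interpolation inequalities, with the proof deferred to the references \cite{zbMATH03318089,ASNSP_1959_3_13_2_115_0}. Your proposal therefore supplies an argument where the paper only cites; it is not ``the same approach'' in any meaningful sense, so let me assess it on its own terms.

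Your scaling step is correct and clean: with $\psi(y)=\ph(ty)$ one has $\|D^k\ph\|_{L^p(0,t)}=t^{1/p-k}\|D^k\psi\|_{L^p(0,1)}$, the hypothesis $\frac{1}{P}=j+(\frac{1}{r}-l)\alpha+\frac{1-\alpha}{q}$ makes both sides of the target inequality scale like $t^{1/P-j}$, and the factor $t^{1/P-j-1/s}$ in front of the $L^s$ term is exactly what is needed to cancel the $t^{1/s}$ coming from $\|\ph\|_{L^s(0,t)}=t^{1/s}\|\psi\|_{L^s(0,1)}$. The reduction to $t=1$ is legitimate. Invoking a Stein-type universal extension operator and the classical Gagliardo--Nirenberg inequality on $\rr$ (valid here since $\alpha\in(0,1)$ strictly, so the exceptional endpoint cases are excluded) is also a reasonable route.

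The one concretely wrong sentence is the claim that ``Hölder's inequality then allows one to replace $L^{s'}$ by $L^s$ for any prescribed $s\in[1,+\infty]$.'' On a bounded interval Hölder gives $\|\ph\|_{L^{s'}(0,1)}\leqslant\|\ph\|_{L^s(0,1)}$ only when $s'\leqslant s$; it does not let you bound a stronger norm by a weaker one. As written, your argument produces a remainder in some $L^{s'}$ depending on $r$ and on the intermediate estimates, and if the prescribed $s$ is smaller than that $s'$ the step breaks. The standard fix is to arrange the control of the intermediate derivatives so that the low-order remainder lands in $L^1$ rather than in some larger $L^{s'}$: for $\ph\in\mathcal{C}^\infty([0,1])$ one has, via a Poincaré-type chain, $\|D^k\ph\|_{L^\infty(0,1)}\leqslant C\bigl(\|D^l\ph\|_{L^1(0,1)}+\|\ph\|_{L^1(0,1)}\bigr)$ for $0\leqslant k<l$, and then $\|\ph\|_{L^1(0,1)}\leqslant\|\ph\|_{L^s(0,1)}$ holds for every $s\in[1,+\infty]$. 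With that correction, and the elementary subadditivity $(a+b)^\alpha\leqslant a^\alpha+b^\alpha$ you already use, the sketch does close. Since the paper itself only cites the result, the cleanest move in this context would be to do the same; if you do wish to include a proof, replace the Hölder sentence by the Poincaré-to-$L^1$ argument above.
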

	\section{Necessary conditions for STLC in the symmetrical case}	
	This section is dedicated to the proof of Theorem \ref{theoremdriftbis}.
	\subsection{Dominant part of the logarithm}
	The expression $|(u_1,\cdots,u_k,v_1,\cdots,v_k)(t)|$ denotes an arbitrary norm of the vector considered in $\rr^{2k}$. We recall that $\pi(k,m)$ is given by \eqref{defpi}. We use Corollary \ref{bb} to extract the main terms from the dynamics. This is the goal of the following statement.
	\begin{lm}\label{dominant} Let $k,m\in\nn^*$. Let $\mathcal{P}$ be a linear form satisfying $\restriction{\mathcal{P}}{\mathcal{N}^m_k(f)(0)}\equiv0$. Then, as $(t,\left\|(u,v)\right\|_{L^1})\to0$,
		\begin{equation}\begin{gathered}\label{dvlplog}\mathcal{P}\mathcal{Z}_{\pi(k,m)}(t;f,(u,v))(0)=\mathcal{P}\left(f_{W^1_k}(0)\right)\xi_{W_k^1}(t,(u,v))+\mathcal{P}\left(f_{W^2_k}(0)\right)\xi_{W_k^2}(t,(u,v))\\+\mathcal{P}\left(f_{C_{2k-1}}(0)\right)\xi_{C_{2k-1}}(t,(u,v))+\mathcal{O}\left(t\left\|(u_k,v_k)\right\|_{L^2}^2+|(u_1,\cdots,u_k,v_1,\cdots,v_k)(t)|^2\right).\end{gathered}\end{equation}
	\end{lm}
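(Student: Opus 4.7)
The plan is to apply Corollary~\ref{bb} with $r=3$, $M=\pi(k,m)$, $\mathfrak{b}_1:=W_k^1$, $\mathfrak{b}_2:=W_k^2$, $\mathfrak{b}_3:=C_{2k-1}$, and
\[
\mathcal{N}:=\bigl(\mathcal{B}_{\llbracket 1,\pi(k,m)\rrbracket}\setminus\mathcal{B}_2\bigr)\cup\{W_{j,l}^1,W_{j,l}^2:\ j\in\llbracket 1,k-1\rrbracket,\ l\in\nn\}\cup\{C_{j,l}:\ j\in\llbracket 0,2k-2\rrbracket,\ l\in\nn\},
\]
together with the functional $\Xi(t,(u,v)):=t\|(u_k,v_k)\|_{L^2}^2+|(u_1,\dots,u_k,v_1,\dots,v_k)(t)|^2$, which trivially satisfies $\Xi=\mathcal{O}(1)$ as $(t,\|(u,v)\|_{L^1})\to 0$. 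Theorem~\ref{thm:viennot} combined with the definition of $\mathcal{N}_k^m$ shows that the assumption $\restriction{\mathcal{P}}{\mathcal{N}_k^m(f)(0)}\equiv 0$ forces $\mathcal{P}(f_b(0))=0$ for every $b\in\mathcal{N}$, which is the vanishing condition required by Corollary~\ref{bb}.

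By Proposition~\ref{Prop:Bs}, the set $\mathcal{E}_1:=\mathcal{B}_{\llbracket 1,\pi(k,m)\rrbracket}\setminus(\mathcal{N}\cup\{\mathfrak{b}_1,\mathfrak{b}_2,\mathfrak{b}_3\})$ consists exactly of the Hall brackets $W_{j,l}^i$ ($i\in\{1,2\}$) with $j\geqslant k$ and $(j,l)\neq(k,0)$, together with $C_{j,l}$ with $j\geqslant 2k-1$ and $(j,l)\neq(2k-1,0)$. I would check the assumption of Proposition~\ref{estim-mainterm} on $\mathcal{E}_1$ by invoking Proposition~\ref{estimationcoordonnées}: its second point, applied with $j_0=k$ and $p=1$, handles the $W_{j,l}^i$ after rewriting $\|u_k\|_{L^2}^2=t^{-1}(t\|u_k\|_{L^2}^2)\leqslant t^{-1}\Xi$; its third point, applied with the parameters $k=k'$ (our $k$) and $p=q=2$, handles the $C_{j,l}$ after Young's inequality $\|u_k\|_{L^2}\|v_k\|_{L^2}\leqslant\Xi/(2t)$. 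A crucial remark is that the boundary-term indicator $\mathbb{1}_{l\leqslant k-2-\lfloor j/2\rfloor}$ vanishes identically on $\mathcal{E}_1$, since $j\geqslant 2k-1$ forces $\lfloor j/2\rfloor\geqslant k-1$. All these estimates hold with the common exponent $\sigma=2k+2$, which is bounded by $|b|$ precisely thanks to the exclusion of $(k,0)$ and $(2k-1,0)$; I would choose $L:=2k+2$.

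For the assumption of Proposition~\ref{estim-crossterm} on $\mathcal{E}_2:=\mathcal{E}_1\cup\{\mathfrak{b}_1,\mathfrak{b}_2,\mathfrak{b}_3\}$, the decisive observation is combinatorial: every element of $\mathcal{E}_2$ has $n=2$, so if $b_1\geqslant\dots\geqslant b_q\in\mathcal{B}\setminus\{X_0\}$ satisfy $\mathrm{supp}\,\mathcal{F}(b_1,\dots,b_q)\cap\mathcal{E}_2\neq\emptyset$, then $\sum_i n(b_i)=2$ and, since $n(b_i)\geqslant 1$, necessarily $q=2$ with $b_1,b_2\in\mathcal{B}_1=\{M_j^1,M_j^2:\ j\in\nn\}$. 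For such a $b_i=M_j^1$, the identity $\xi_{M_j^1}(t,(u,v))=u_{j+1}(t)$ reduces matters to controlling $|u_{j+1}(t)|$. When $j+1\leqslant k$ this is directly dominated by $\Xi^{1/2}$; when $j+1>k$, I would write $u_{j+1}(t)=\int_0^t\frac{(t-s)^{j-k}}{(j-k)!}u_k(s)\,\ds$ and apply Cauchy--Schwarz to get $|u_{j+1}(t)|\leqslant \frac{t^{j-k}}{(j-k)!}(t\|u_k\|_{L^2}^2)^{1/2}\leqslant \frac{t^{j-k}}{(j-k)!}\Xi^{1/2}$. In both cases one takes $\alpha_i=1/2$ (so $\alpha_1+\alpha_2=1$) and $\sigma_i\leqslant k+1\leqslant L$, absorbing the combinatorial prefactors into a sufficiently large $c=c(k)$.

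The main technical obstacle is arranging the constants $c$ and the exponents $\sigma_i$ to be uniform across the infinite families of brackets parametrised by $(j,l)$; this is what the factorial normalisation $1/|b|!$ in the target inequality and the elementary bound $(j+1)!/(j-k)!\leqslant(j+1)^{k+1}$ are there to handle. Once both hypotheses of Corollary~\ref{bb} are in place, equation~\eqref{dvlplog} follows immediately by reading off the three main terms and identifying the remainder as $\mathcal{O}(\Xi)$.
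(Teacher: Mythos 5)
Your proposal is correct and follows essentially the same route as the paper: Corollary~\ref{bb} with $M=\pi(k,m)$, $\mathcal{N}=\mathcal{N}_k^m$ (taken in its Hall-basis form), $L=2k+2$, the three brackets $W_k^1,W_k^2,C_{2k-1}$, and the estimates of Proposition~\ref{estimationcoordonnées} to verify the main- and cross-term hypotheses. The only minor deviations --- using a single $\Xi$ combining both remainder pieces where the paper uses two, and re-deriving the bound on $|u_{j+1}(t)|$ for $j\geqslant k$ by the Cauchy repeated-integration formula and Cauchy--Schwarz rather than citing \eqref{estbase-s11}--\eqref{estbase-s12} directly --- are cosmetic and do not change the substance.
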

	\begin{proof} We apply Corollary \ref{bb} with $M=\pi(k,m)$, $\mathcal{N}=\mathcal{N}_k^m$, $L=2k+2$, $r=3$,  $\sigma=2k+2$, $\mathfrak{b}_1=W_k^1$, $\mathfrak{b}_2=W_k^2$ and $\mathfrak{b}_3=C_{2k-1}$. 
		\begin{enumerate}
			\item \textit{Estimates on the main terms:} let $b\in\mathcal{B}_{\llbracket 1,\pi(k,m)\rrbracket}\setminus\left(\mathcal{N}^{m}_k\cup\left\lbrace\mathfrak{b}_1,\mathfrak{b}_2,\mathfrak{b}_3\right\rbrace\right)$. Then, $n(b)=2$.
			\begin{enumerate}
				\item If $b\in\mathcal{B}_{2,bad}$, $b=W_{j,l}^1$ or $b=W_{j,l}^2$ with $j>k$ or $\left(j=k\text{ and }l\geq 1\right)$. Consequently, $|b|\geq 2k+2$ and the estimates \eqref{estbase-s2bad1} and \eqref{estbase-s2bad2} with $p=1$ and $j_0=k$ give \eqref{bbest1} with $\Xi(t,(u,v))=t\left\|(u_k,v_k)\right\|_{L^2}^2$.
				\item If $b\in\mathcal{B}_{2,good}$, then $b=C_{j,l}$ with $j> 2k-1$ or $\left(j=2k-1\text{ and }l\geq 1\right)$. Similarly, $|b|\geq 2k+2$ and the estimate \eqref{estbase-s2good1} with $p=q=2$ and $k'=k$ gives \eqref{bbest1} with $\Xi(t,(u,v))=t\left\|(u_k,v_k)\right\|_{L^2}^2$ as $k-2-\lfloor\frac{j}{2}\rfloor <0$.
			\end{enumerate}
			\item \textit{Estimates of cross terms: }let $b_1\geq\cdots\geq b_q\in\mathcal{B}\setminus\left\lbrace X_0\right\rbrace$ be such that $n(b_1)+\cdots+n(b_q)\leq \pi(k,m)$ and $\text{supp}\mathcal{F}(b_1,\cdots,b_q)\not\subset\mathcal{N}^m_k$. Let $i\in\llbracket 1,q\rrbracket$.
			\begin{enumerate}
				\item If $b_i=M_j^1$ or $M_j^2$ with $j\in\llbracket 0,k-1\rrbracket$, then by \eqref{egbase-s11} and \eqref{egbase-s12},
				$$|\xi_{b_i}(t,(u,v))|\leq |(u_{j+1},v_{j+1})(t)|.$$
				Then, the estimate \eqref{bbest2} is verified with $\sigma_i=j+1$, $\alpha_i=\frac{1}{2}$ and $\Xi(t,(u,v))=|(u_1,\cdots,u_k,v_1,\cdots,v_k)(t)|^2$.
				\item If $b_i=M_j^1$ or $M_j^2$, with $j\geq k$, $|b_i|\geq k+1$ and the estimates \eqref{estbase-s11} and \eqref{estbase-s12} with $j_0=k$ and $p=2$ give
				$$|\xi_{b_i}(t,(u,v))|\leq\frac{(ct)^{|b_i|}}{|b_i|!}t^{-(k+1)}t^{\frac 12}\left\|(u_k,v_k)\right\|_{L^2}=\frac{(ct)^{|b_i|}}{|b_i|!}t^{-(k+1)}\left(t\left\|(u_k,v_k)\right\|_{L^2}^2\right)^{\frac{1}{2}}.$$
				We obtain \eqref{bbest2} with $\sigma_i=k+1$ and $\alpha_i=\frac{1}{2}$. 
			\end{enumerate}
			Since $\text{supp}\mathcal{F}(b_1,\cdots,b_q)\not\subset\mathcal{N}^m_k$, we have $q=2$ and $b_1,b_2\in\mathcal{B}_1$. Then, as $\alpha_1+\alpha_2=1$, we can apply Corollary \ref{bb} and we obtain the desired equality.
		\end{enumerate}
	\end{proof}
	\subsection{Vectorial relations}
	The purpose of this section is to prove that the condition (BC) implies algebraic properties on the Lie brackets. Using this fact, we will be able to estimate one by one -- in the next paragraph -- the terms $|(u_1,\cdots,u_k,v_1,\cdots,v_k)(t)|$ which appear in the previous proposition.
	\begin{lm}[A bracket relation]\label{bracketdev} Let $k,m\in\nn^*$.
		For all $l\in\llbracket 0,k-1\rrbracket$, for all $(\alpha_{j,1})_{j\in\llbracket 0,l\rrbracket}\in\rr^{l+1},(\alpha_{j,2})_{j\in\llbracket 0,l\rrbracket}\in\rr^{l+1}$, we consider the bracket 
		$$B:=\displaystyle\sum_{j=0}^{l}\alpha_{j,1}M_j^1+\sum_{j=0}^l\alpha_{j,2}M_j^2.$$
		Then, the following expansion holds $$\left[B0^{k-l-1},B0^{k-l}\right]\in\alpha_{l,1}^2W_k^1+\alpha_{l,2}^2W_{k}^2+2\alpha_{l,1}\alpha_{l,2}C_{2k-1}+\mathcal{N}_k^m.$$
	\end{lm}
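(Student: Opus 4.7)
The plan is to expand $[B0^{k-l-1}, B0^{k-l}]$ by bilinearity and analyze each elementary bracket by its length in the free Lie algebra, using that the Hall basis elements of degree $n=2$ of length below $2k+1$ are all accounted for in $\mathcal{N}_k^m$. First, using the linearity of bracket integration and the identity $M_j^i 0^\nu = M_{j+\nu}^i$, I would write
$$B 0^\nu = \sum_{j=0}^l \alpha_{j,1} M_{j+\nu}^1 + \sum_{j=0}^l \alpha_{j,2} M_{j+\nu}^2,$$
so that
$$[B0^{k-l-1}, B0^{k-l}] = \sum_{\substack{i,i'\in\{1,2\}\\ j,j'\in\llbracket 0,l\rrbracket}} \alpha_{j,i}\alpha_{j',i'}\bigl[M_{j+k-l-1}^i, M_{j'+k-l}^{i'}\bigr].$$
Every elementary bracket in the right-hand side has $n=2$ and length $a+b+2$ with $a\in\llbracket k-l-1,k-1\rrbracket$ and $b\in\llbracket k-l,k\rrbracket$; in particular its length is at most $2k+1$, with equality iff $j=j'=l$.

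The second step is a length argument via Viennot's theorem and Proposition \ref{Prop:Bs}. Every $n=2$ element of $\mathcal{L}(X)$ decomposes in the Hall basis as a combination of $W_{i,l'}^1$, $W_{i,l'}^2$, and $C_{j,l'}$. A Hall basis element of length $<2k+1$ satisfies $2i+1+l'<2k+1$ (forcing $i\leqslant k-1$) in the $W$-cases, or $j+l'+2<2k+1$ (forcing $j\leqslant 2k-2$) in the $C$-case; in both situations it belongs to $\mathcal{N}_k^m$. Hence every elementary bracket with $(j,j')\neq (l,l)$, being of length strictly less than $2k+1$, contributes only to $\mathcal{N}_k^m$.

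It then remains to evaluate the four brackets produced at $(j,j')=(l,l)$. Three of them are Hall basis elements by construction: $[M_{k-1}^1, M_k^1] = W_k^1$, $[M_{k-1}^2, M_k^2] = W_k^2$, and $[M_{k-1}^2, M_k^1] = C_{2k-1}$, the last equality using that $2k-1$ is odd (hence the swap in the definition of $C$). The fourth one, $[M_{k-1}^1, M_k^2]$, is not Hall and requires the Jacobi identity: writing $M_k^2 = [M_{k-1}^2, X_0]$ and expanding, I expect
$$[M_{k-1}^1, M_k^2] = [M_{k-1}^2, M_k^1] - [X_0, [M_{k-1}^1, M_{k-1}^2]] = C_{2k-1} + C_{2k-2,1},$$
using $[M_{k-1}^1, M_{k-1}^2] = C_{2k-2}$ and $[X_0, C_{2k-2}] = -C_{2k-2,1}$. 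The remainder $C_{2k-2,1}$ lies in $\mathcal{N}_k^m$ since $2k-2\in\llbracket 0, 2k-2\rrbracket$, so it is absorbed. Summing the four $(l,l)$-contributions with their $\alpha$-coefficients gives exactly $\alpha_{l,1}^2 W_k^1 + \alpha_{l,2}^2 W_k^2 + 2\alpha_{l,1}\alpha_{l,2} C_{2k-1}$ modulo $\mathcal{N}_k^m$, as announced. The main delicacy is the Jacobi computation for the cross term of type $(1,2)$ and the observation that its non-Hall piece $C_{2k-2,1}$ is precisely of the form enumerated in $\mathcal{N}_k^m$.
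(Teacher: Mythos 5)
Your proof is correct and follows essentially the same route as the paper: expand bilinearly, discard all elementary brackets with $(j,j')\neq(l,l)$ by a degree/length argument combined with Viennot's theorem and the gradedness of $\mathcal{L}(X)$, and then handle the four surviving brackets, absorbing the extra $C_{2k-2,1}$ term into $\mathcal{N}_k^m$. The only cosmetic differences are that the paper phrases the degree cutoff in terms of $n_0(b)<2k-1$ (equivalent to your length $<2k+1$ since $n(b)=2$ is fixed) and works explicitly in a quotient space, and that you spell out the Jacobi computation $[M_{k-1}^1,M_k^2]=C_{2k-1}+C_{2k-2,1}$ whereas the paper states the result directly.
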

	This lemma is proved in Appendix \ref{appendicebracket}.
	\begin{lm}\label{contr} Let $e_1,e_2,e_3\in\rr^d$ be three vectors and $N\subset\rr^d$ a vector subspace. If $e_1$, $e_2$, $e_3$, $N$ satisfy (BC), then, there does not exist $(a,b)\in\rr^2\setminus\{(0,0)\}$ such that 
		\begin{equation}\label{dep}a^2e_1+b^2e_2\pm2abe_3\in N.\end{equation}
	\end{lm}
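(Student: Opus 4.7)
The plan is straightforward: use the linear form $\mathbb{P}$ furnished by (BC) to reduce the vectorial statement to the positive-definiteness of a scalar quadratic form in $(a,b)$.

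First I would assume (BC) and pick $\mathbb{P}:\rr^d\to\rr$ with $\restriction{\mathbb{P}}{N}\equiv 0$ and $\mathbb{P}(e_3)^2<\mathbb{P}(e_1)\mathbb{P}(e_2)$. By the remark following Definition \ref{defbc}, up to replacing $\mathbb{P}$ by $-\mathbb{P}$, I may assume $\mathbb{P}(e_1)>0$ and $\mathbb{P}(e_2)>0$.

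Next, I would argue by contradiction: suppose there exists $(a,b)\in\rr^2\setminus\{(0,0)\}$ and a choice of sign $\varepsilon\in\{-1,+1\}$ with
\[
a^2 e_1+b^2 e_2+2\varepsilon ab\, e_3\in N.
\]
Applying $\mathbb{P}$ and using $\restriction{\mathbb{P}}{N}\equiv 0$ yields
\[
a^2\mathbb{P}(e_1)+b^2\mathbb{P}(e_2)+2\varepsilon ab\,\mathbb{P}(e_3)=0.
\]
Viewing the left-hand side as a quadratic form in $(a,b)$, its discriminant is
\[
4\bigl(\mathbb{P}(e_3)^2-\mathbb{P}(e_1)\mathbb{P}(e_2)\bigr)<0
\]
by (BC), and since $\mathbb{P}(e_1)>0$ the form is positive definite (this is exactly the direction of the lemma preceding Corollary \ref{bcquadratic} applied with $\alpha=2\mathbb{P}(e_1)$, $\beta=2\mathbb{P}(e_2)$, $\gamma=2\varepsilon\mathbb{P}(e_3)$). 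Hence the only real zero of the form is $(a,b)=(0,0)$, contradicting our choice.

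There is no real obstacle here; the statement is essentially an immediate restatement of (BC) via the identification of positive-definite binary quadratic forms with the condition $\mathbb{P}(e_3)^2<\mathbb{P}(e_1)\mathbb{P}(e_2)$, and it could even be cited directly from the second equivalence in Corollary \ref{bcquadratic}. The only mild subtlety is ensuring that the same $\mathbb{P}$ works for both signs $\pm$, which is automatic because replacing $\varepsilon$ by $-\varepsilon$ amounts to replacing $b$ by $-b$ and does not affect positive-definiteness.
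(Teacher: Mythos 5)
Your proof is correct and follows essentially the same path as the paper: apply the linear form $\mathbb{P}$ from (BC) to reduce \eqref{dep} to the scalar equation $a^2\mathbb{P}(e_1)+b^2\mathbb{P}(e_2)+2\varepsilon ab\,\mathbb{P}(e_3)=0$, then observe that the left-hand side is a positive definite binary quadratic form. The paper instead runs a Young's-inequality estimate, $|2ab\,\mathbb{P}(e_3)|<2|ab|\sqrt{\mathbb{P}(e_1)\mathbb{P}(e_2)}\leqslant a^2\mathbb{P}(e_1)+b^2\mathbb{P}(e_2)$, which requires first disposing of the degenerate cases $a=0$ and $b=0$ separately (the first inequality is not strict if $ab=0$). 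By invoking positive-definiteness directly through the lemma preceding Corollary \ref{bcquadratic}, you avoid that case split entirely, since a positive definite form vanishes only at the origin regardless of whether $a$ or $b$ is zero; this is a minor but genuine simplification of the same argument.
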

	\begin{proof}By contradiction, assume that there exists $(a,b)\in\rr^2\setminus\{(0,0)\}$ satisfying \eqref{dep}. If $a=0$, then \eqref{dep} gives
		$b^2e_2\in N.$ As, $b\neq 0$ , we obtain $e_2\in N$. This is a contradiction with (BC). Thus, $a\neq 0$.  Similarly, $b\neq 0$. Hence, using $\mathbb{P}$ given by (BC), we have
		\begin{equation}\label{vectorial}
			a^2\mathbb{P}\left(e_1\right)+b^2\mathbb{P}\left(e_2\right)\pm2ab\mathbb{P}\left(e_3\right)=0.
		\end{equation} Nevertheless, by hypothesis (BC) and Young's inequality,
		$$\left|2ab\mathbb{P}(e_3)\right|<2\left|ab\right|\sqrt{\mathbb{P}\left(e_1\right)\mathbb{P}\left(e_2\right)}\leq a^2\mathbb{P}\left(e_1\right)+b^2\mathbb{P}\left(e_2\right).$$
		This is a contradiction with \eqref{vectorial}.
	\end{proof}
	
	\begin{lm}\label{libre}
		Let $k,m\in\nn^*$ and $\nu(k,m):=\left\lfloor\frac{\pi(k,m)}{2}\right\rfloor$. Assume that \eqref{hypo} is verified. 
		\begin{enumerate}
			\item The family 
			$\left(f_{M_0^1}(0),\cdots f_{M_{k-1}^1}(0),f_{M_0^2}(0),\cdots,f_{M_{k-1}^2}(0)\right)$ is linearly independent.
			\item If $\nu(k,m)\geq 2$, $$\spn\left(f_{M_0^1}(0),\cdots,f_{M_{k-1}^1}(0),f_{M_0^2}(0),\cdots,f_{M_{k-1}^2}(0)\right)\cap{S}_{\llbracket 2,\nu(k,m)\rrbracket}(f)(0)=\left\lbrace 0\right\rbrace.$$
		\end{enumerate}
			In particular, $f_1(0)\neq 0$ and $f_2(0)\neq 0$.
	\end{lm}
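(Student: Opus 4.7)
The plan is to handle both claims simultaneously by contradiction. Suppose that there exist real coefficients $(\alpha_{j,1}, \alpha_{j,2})_{j \in \llbracket 0, k-1\rrbracket}$, not all zero, and a vector $v \in S_{\llbracket 2, \nu(k,m)\rrbracket}(f)(0)$ (the choice $v = 0$ covers the first assertion) such that
\[
\sum_{j=0}^{k-1}\alpha_{j,1} f_{M_j^1}(0)+\sum_{j=0}^{k-1}\alpha_{j,2} f_{M_j^2}(0) = v.
\]
Let $l$ be the largest index in $\llbracket 0, k-1\rrbracket$ with $(\alpha_{l,1}, \alpha_{l,2}) \neq (0,0)$, pick a preimage $V \in \mathcal{L}(X)$ of $v$ realized as a linear combination of brackets whose $n$-count lies in $\llbracket 2, \nu(k,m)\rrbracket$, and set $\tilde B := \sum_{j=0}^l (\alpha_{j,1} M_j^1 + \alpha_{j,2} M_j^2)$ and $B' := \tilde B - V \in \mathcal{L}(X)$, so that $f_{B'}(0) = 0$ by construction.

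Since $f_0(0) = 0$, a one-line induction on $\nu$ shows that $f_{B' 0^\nu}(0) = 0$ for every $\nu \in \nn$; in particular $f_{[B'0^{k-l-1}, B'0^{k-l}]}(0) = 0$. I then expand the double bracket by bilinearity into
\[
[B'0^{k-l-1}, B'0^{k-l}] = [\tilde B 0^{k-l-1}, \tilde B 0^{k-l}] - [\tilde B 0^{k-l-1}, V 0^{k-l}] - [V 0^{k-l-1}, \tilde B 0^{k-l}] + [V 0^{k-l-1}, V 0^{k-l}].
\]
Lemma \ref{bracketdev} rewrites the first term modulo $\spn(\mathcal{N}_k^m)$ as $\alpha_{l,1}^2 W_k^1 + \alpha_{l,2}^2 W_k^2 + 2 \alpha_{l,1}\alpha_{l,2} C_{2k-1}$. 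Provided the three other terms also lie in $\spn(\mathcal{N}_k^m)$, evaluating at $f$ and at $0$ yields
\[
\alpha_{l,1}^2 f_{W_k^1}(0) + \alpha_{l,2}^2 f_{W_k^2}(0) + 2\alpha_{l,1}\alpha_{l,2} f_{C_{2k-1}}(0) \in \mathcal{N}_k^m(f)(0),
\]
which contradicts Lemma \ref{contr} applied, via hypothesis \eqref{hypo}, to $(a,b) = (\alpha_{l,1}, \alpha_{l,2}) \neq (0,0)$. This finishes assertion 2, and assertion 1 is recovered by taking $v = 0$ (in which case $V = 0$ and only the pure term remains).

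The only delicate point is therefore the control of the three cross terms. Since every monomial appearing in $\tilde B 0^\nu$ has $n$-count equal to $1$ while every monomial of $V 0^\nu$ has $n$-count in $\llbracket 2, \nu(k,m)\rrbracket$, the two mixed brackets are supported on brackets of $n$-count in $\llbracket 3, \nu(k,m)+1\rrbracket$, and the last one on brackets of $n$-count in $\llbracket 4, 2\nu(k,m)\rrbracket$. The arithmetic input $2\nu(k,m) \leq \pi(k,m)$, immediate from $\nu(k,m) = \lfloor \pi(k,m)/2 \rfloor$, places all these $n$-counts inside $\llbracket 1, \pi(k,m)\rrbracket \setminus \{2\}$, so the three cross terms are absorbed in $S_{\llbracket 1, \pi(k,m)\rrbracket\setminus\{2\}}(X) \subset \spn(\mathcal{N}_k^m)$. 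Once this bookkeeping is done, the rest is pure bilinearity of the Lie bracket together with the two already-established Lemmas \ref{bracketdev} and \ref{contr}.
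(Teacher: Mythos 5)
Your proof is correct and follows essentially the same path as the paper: pick the maximal index $l$ (the paper calls it $K$) with nonzero coefficients, bracket the candidate relation with itself after appropriate $0$-iteration, apply Lemma~\ref{bracketdev} to the pure term, absorb the remaining terms into $\mathcal{N}_k^m$ via the arithmetic $2\nu(k,m)\leqslant\pi(k,m)$, and conclude with Lemma~\ref{contr}. The only cosmetic difference is that the paper folds the error $B\in S_{\llbracket 2,\nu\rrbracket}(X)$ directly into $B_1$ and states without elaboration that the resulting double bracket lands in the indicated affine subspace plus $S_{\llbracket 3,2\nu\rrbracket}(X)$, whereas you make the bilinear expansion and the $n$-count bookkeeping of the three cross terms explicit --- a small but genuine gain in readability.
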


	\begin{proof}We prove the second point: assume by contradiction that there exist  $(\alpha_{j,1})_{j\in\llbracket 0,k-1\rrbracket},$ $(\alpha_{j,2})_{j\in\llbracket 0,k-1\rrbracket}\in\rr^k$ not all zero and $B\in{S}_{\llbracket 2,\nu(k,m)\rrbracket}(X)$ such that $f_{B_1}(0)=0$, with 
		$$B_1:=\sum_{j=0}^{k-1}\left(\alpha_{j,1}M_j^1+\alpha_{j,2}M_j^2\right)+B.$$ Let $K=\max\{j\in\llbracket 0,k-1\rrbracket; \ (\alpha_{j,1},\alpha_{j,2})\neq (0,0)\}$. As $f_0(0)=0$, $f_{B_2}(0)=0$, with 
		$$B_2:=[B_10^{k-1-K},B_10^{k-K}]\in\alpha_{K,1}^2W_k^1+\alpha_{K,2}^2W_k^2+2\alpha_{K,1}\alpha_{K,2}C_{2k-1}+\mathcal{N}^m_k+{S}_{\llbracket 3,2\nu(k,m)\rrbracket}(X),$$ the expansion is given by Lemma \ref{bracketdev} with $l=K$.
		As $\pi(k,m)\geq 2\nu(k,m)$ and $\nu(k,m)\geq 2$, one has ${S}_{\llbracket 3,2\nu(k,m)\rrbracket}(X)\subseteq{S}_{\llbracket 1,\pi(k,m)\rrbracket\setminus\left\lbrace 2\right\rbrace}(X)\subseteq\mathcal{N}^m_k$. Thus,
		$$\alpha_{K,1}^2f_{W_k^1}(0)+\alpha_{K,2}^2f_{W_k^2}(0)+2\alpha_{K,1}\alpha_{K,2}f_{C_{2k-1}}(0)\in\mathcal{N}^m_k(f)(0).$$  We use Lemma \ref{contr} with $e_1=f_{W_k^1}(0)$, $e_2=f_{W_k^2}(0)$, $e_3=f_{C_{2k-1}}(0)$ and $N=\mathcal{N}_k^m(f)(0)$ to obtain a contradiction. We obtain the first point in the same way, taking $B=0$.
	\end{proof}
	\subsection{Closed-loop estimate}
	Using the algebraic properties proved in the previous section, we can now estimate the terms $\left|(u_1,\cdots,u_k,v_1,\cdots,v_k)(t)\right|$, using the representation formula of the state of Magnus-type -- see Proposition \ref{magnus}. This method is already used by Beauchard and Marbach in \cite{beauchard2024unified}. The elements $u_{i+1}(t)=\xi_{M_1^i}(t,(u,v))$  are part of the dynamics and this fact is used to estimate them, as in \eqref{art2jouetex} for the sixth term.
	\begin{lm}\label{cloop}
		Let $k,m\in\nn^*$ and $\nu(k,m):=\left\lfloor\frac{\pi(k,m)}{2}\right\rfloor$. Assume that \eqref{hypo} holds. Then, as $(t,\left\|(u,v)\right\|_{L^1})\to0$,
		\begin{equation}\label{closed-lopp}|\left(u_1,\cdots,u_k,v_1,\cdots,v_k\right)(t)|=\mathcal{O}\left(t^{\frac 12}\left\|(u_k,v_k)\right\|_{L^2}+\left\|(u,v)\right\|_{L^1}^{\nu(k,m)+1}+\left|x(t;(u,v))\right|\right).\end{equation}
	\end{lm}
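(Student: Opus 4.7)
The plan is to project the Magnus expansion of the state onto the linear layer $V := \spn\bigl(f_{M_j^i}(0) : 0 \leqslant j \leqslant k-1, \ i \in \{1,2\}\bigr)$ and read off $u_{j+1}(t)$ and $v_{j+1}(t)$ from the identities \eqref{egbase-s11}--\eqref{egbase-s12}. By Lemma \ref{libre}, the $2k$ vectors spanning $V$ are linearly independent, and $V \cap S_{\llbracket 2, \nu(k,m) \rrbracket}(f)(0) = \{0\}$ (the second assertion being vacuous when $\nu(k,m) \leqslant 1$). Thus there exists a projector $\Pi : \rr^d \to V$ whose kernel contains $S_{\llbracket 2, \nu(k,m) \rrbracket}(f)(0)$, which I encode as $2k$ linear forms $(\mathcal{P}_{j,i})$ that vanish on $S_{\llbracket 2, \nu(k,m) \rrbracket}(f)(0)$ and form the dual basis of $(f_{M_j^i}(0))$ in $V$.

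I then apply the Magnus formula (Proposition \ref{magnus}) with $M = \nu(k,m)$, compose on the left with each $\mathcal{P}_{j_0, i_0}$, and invoke Corollary \ref{bb} with $L = k+1$, $\mathcal{N} = \mathcal{B}_{\llbracket 2, \nu(k,m) \rrbracket}$, dominant brackets $\{\mathfrak{b}_i\} := \{M_j^i : 0 \leqslant j \leqslant k-1, \ i \in \{1,2\}\}$, and functional $\Xi(t,(u,v)) := t^{1/2} \|(u_k, v_k)\|_{L^2}$. The main-term hypothesis \eqref{bbest1} on $\mathcal{E} = \{M_j^i : j \geqslant k, \ i \in \{1,2\}\}$ is a direct consequence of \eqref{estbase-s11}--\eqref{estbase-s12} with $j_0 = k$ and $p = 2$. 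The cross-term hypothesis \eqref{bbest2} is \emph{vacuous}: any $b_i \in \mathcal{B} \setminus \{X_0\}$ satisfies $n(b_i) \geqslant 1$ (the only Hall bracket of $n$-degree zero being $X_0$), so every element of $\mathcal{F}(b_1, \ldots, b_q)$ with $q \geqslant 2$ has $n$-degree at least $2$, its Hall support being thus disjoint from $\mathcal{E} \subset \mathcal{B}_1$.

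Using $\mathcal{P}_{j_0, i_0}(f_{M_j^i}(0)) = \delta_{(j_0, i_0),(j, i)}$ together with \eqref{egbase-s11}--\eqref{egbase-s12}, Corollary \ref{bb} then delivers
\[\mathcal{P}_{j_0, i_0} \mathcal{Z}_{\nu(k,m)}(t;f,(u,v))(0) = \xi_{M_{j_0}^{i_0}}(t,(u,v)) + \mathcal{O}\bigl(t^{1/2} \|(u_k, v_k)\|_{L^2}\bigr),\]
where $\xi_{M_{j_0}^1}(t,(u,v)) = u_{j_0+1}(t)$ and $\xi_{M_{j_0}^2}(t,(u,v)) = v_{j_0+1}(t)$. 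Inserting this into \eqref{x=Z2+O}, summing over $(j_0, i_0)$, and absorbing $\|x(t;(u,v))\|^{1+1/\nu(k,m)} = o(\|x(t;(u,v))\|)$ as $\|x(t;(u,v))\| \to 0$, one obtains the desired estimate \eqref{closed-lopp}.

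The only delicate point is thus the verification that the cross-term hypothesis of Corollary \ref{bb} is vacuous, which reduces to the elementary $n$-grading observation above; all the remaining ingredients are already available from Lemma \ref{libre} and the estimates of Proposition \ref{estimationcoordonnées}.
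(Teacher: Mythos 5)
Your proof is correct and follows essentially the same route as the paper: Lemma~\ref{libre} supplies the transversality needed to build the linear form(s), Corollary~\ref{bb} with $M=\nu(k,m)$ and $\Xi=t^{1/2}\|(u_k,v_k)\|_{L^2}$ isolates $u_{j+1}(t)$ or $v_{j+1}(t)$ (with the cross-term hypothesis vacuous because $\mathcal{E}\subset\mathcal{B}_1$ while any $\mathcal{F}(b_1,\dots,b_q)$ with $q\geqslant 2$ lies in $n$-degree $\geqslant 2$), and the Magnus formula with $M=\nu(k,m)$ converts this into the closed-loop estimate. The only cosmetic difference is organizational: you apply Corollary~\ref{bb} once with $r=2k$ dominant brackets $\{M_j^i\}$ and a dual basis of linear forms, whereas the paper applies it $2k$ times with a single dominant bracket $\mathfrak{b}_1=M_i^1$ (resp.\ $M_i^2$) and folds the remaining $M_j^l$ into $\mathcal{N}$; both variants are equivalent and rely on the same ingredients.
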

	\begin{proof} Let $i\in\llbracket 0,k-1\rrbracket$, By Lemma \ref{libre}, we can consider a linear form $\mathcal{P}$ such that $\restriction{\mathcal{P}}{{\mathcal{N}(f)(0)}}\equiv0$ with $\mathcal{N}:=\mathcal{B}_{\llbracket 2,\nu(k,m)\rrbracket}\cup\{M_j^l; \ j\in\llbracket 0,k-1\rrbracket, l\in\llbracket1,2\rrbracket\}\setminus\{M_i^1\}$ and $\mathcal{P}(f_{M_i^1}(0))=1$. Now, we use Corollary \ref{bb} with $M=\nu(k,m)$, $L=k+1$, $r=1$ and $\mathfrak{b}_1=M_i^1$. 
			\begin{enumerate}
			\item \textit{Estimates on the main terms:} for all $b\in\mathcal{B}_{\llbracket 1,\nu(k,m)\rrbracket}$ such that $b\notin\mathcal{N}_k^m\cup\{\mathfrak{b}_1\}$, we have $n(b)=1$ so $b=M_j^i$ for $j\geq k$, $i\in\llbracket1,2\rrbracket$ and $|b|\geq k+1$. Thus, estimates \eqref{estbase-s11} and \eqref{estbase-s12} with $j_0=k$ and $p=2$ give
			$$|\xi_b(t,(u,v))|\leq\frac{(ct)^{|b|}}{|b|!}t^{-(k+1)}\left(t^{\frac 12}\left\|(u_k,v_k)\right\|_{L^2}\right).$$
			Then, \eqref{bbest1} holds with $\Xi(t,(u,v))=t^{\frac 12}\left\|(u_k,v_k)\right\|_{L^2}$ and $\sigma =k+1$.
			\item \textit{Estimates of cross terms:} for all $b\in\mathcal{B}_{\llbracket 1,\nu(k,m)\rrbracket}\setminus\mathcal{N}_k^m$, we have $n(b)=1$ and there is no cross terms. 
			\end{enumerate}
		Then, Corollary \ref{bb} leads to the equality
		$$\mathcal{P}\mathcal{Z}_{\nu(k,m)}(t;f,(u,v))(0)=u_{i+1}(t)+\mathcal{O}\left(t^{\frac 12}\left\|(u_k,v_k)\right\|_{L^2}\right).$$
		Using the Magnus formula given by Proposition \ref{magnus} with $M=\nu(k,m)$, we finally get
		$$\mathcal{P}x(t;(u,v))=u_{i+1}(t)+\mathcal{O}\left(t^{\frac 12} \left\|(u_k,v_k)\right\|_{L^2}+\left\|(u,v)\right\|_{L^1}^{\nu(k,m)+1}+\left|x(t;(u,v))\right|^{1+\frac{1}{\nu(k,m)}}\right).$$
		We obtain the result. We can obtain the same estimate for $|v_{i+1}(t)|$, $i\in\llbracket 0,k-1\rrbracket$.
	\end{proof}
	\subsection{Interpolation inequality}
	The representation formula of the state -- see Proposition \ref{magnus} with $M=\pi(k,m)$ -- makes a strong link between $x(t;(u,v))$ and $\mathcal{Z}_{\pi(k,m)}(t;f,(u,v))(0)$.  Lemma \ref{dominant} gives an expansion of $\mathbb{P}\mathcal{Z}_{\pi(k,m)}(t;f,(u,v))(0)$. Furthermore, the edge terms $|(u_1,\cdots,u_k,v_1,\cdots,v_k)(t)|$ are estimated by Lemma \ref{cloop}. However, there is an error term in the Magnus-type formula shaped as $\mathcal{O}\left(\left\|(u,v)\right\|_{L^1}^{\pi(k,m)+1}\right)$. We then relate this quantity to the size of the drift $\left\|(u_k,v_k)\right\|_{L^2}^2$, thanks to the Gagliardo--Nirenberg interpolation inequalities. This is the purpose of the following lemma.
	\begin{lm}\label{gna}
		Let $k,m\in\nn^*$ and $p\in[1,+\infty]$. There exists $C>0$ such that, for every $t>0$ and $u\in W^{m,p}((0,t),\rr)$,
		\begin{equation}\label{gn-m}\left\|u\right\|_{L^1}^{\pi(k,m)+1}\leq  C t^{\pi(k,m)-2k}\left\|u\right\|_{W^{m,p}}^{\pi(k,m)-1}\left\|u_k\right\|_{L^2}^2.
		\end{equation}
	\end{lm}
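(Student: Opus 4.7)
The plan is to set $\varphi := u_k$, so that $u = \varphi^{(k)}$ and $u^{(m)} = \varphi^{(k+m)}$, and then apply the Gagliardo--Nirenberg inequality of the preceding proposition to $\varphi$. The goal is to interpolate an $L^Q$-norm of $u$ between $\|u^{(m)}\|_{L^p}$ and $\|u_k\|_{L^2}$, and then pass to $\|u\|_{L^1}$ via Hölder's inequality. With the choice $j=k$, $l=k+m$, $r=p$, $q=s=2$ and $P=Q$, the scaling condition uniquely determines the interpolation exponent $\alpha$, and I would pick $Q\ge 1$ large enough (say $Q=2$ for $p\ge 2$; $Q$ slightly above $1$ for $p<2$) to ensure that the admissibility constraint $j/l\le\alpha$ holds. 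Combined with $\|u\|_{L^1}\le t^{1-1/Q}\|u\|_{L^Q}$, the output is an estimate of the shape
\[
\|u\|_{L^1} \;\leq\; C\,t^{\gamma_1}\|u^{(m)}\|_{L^p}^{\alpha}\|u_k\|_{L^2}^{1-\alpha} \;+\; C\,t^{\gamma_2}\|u_k\|_{L^2},
\]
for explicit $\gamma_1,\gamma_2$ depending on $k,m,p$.

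Next I would raise this inequality to the $(\pi+1)$-th power using $(A+B)^{\pi+1}\le 2^{\pi}(A^{\pi+1}+B^{\pi+1})$ and process the two resulting terms separately. In each of them the exponent of $\|u_k\|_{L^2}$ exceeds the target value $2$; I factor out $\|u_k\|_{L^2}^2$ and trade the surplus factors of $\|u_k\|_{L^2}$ for powers of $t^{k+\frac12-\frac1p}\|u\|_{W^{m,p}}$ via the Hölder-type estimate $\|u_k\|_{L^2}\le C\,t^{k+\frac12-\frac1p}\|u\|_{L^p}\le C\,t^{k+\frac12-\frac1p}\|u\|_{W^{m,p}}$, which follows from Lemma~\ref{holder} (applied to $u_k$) combined with $\|u\|_{L^1}\le t^{1-1/p}\|u\|_{L^p}$. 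Using in addition $\|u^{(m)}\|_{L^p}\le\|u\|_{W^{m,p}}$, each term takes the form $C\,t^{E}\|u\|_{W^{m,p}}^{\pi-1}\|u_k\|_{L^2}^2$.

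The decisive step is to verify that $E\ge\pi-2k$ in each case, and this is where the definition $\pi(k,m)=1+\lceil 2k/m\rceil$ plays its role. The ceiling guarantees $(\pi-1)m\ge 2k$, which ensures that the (scale-forced) GN exponent satisfies $\alpha(\pi+1)\le\pi-1$; the non-negative surplus $\varepsilon:=(\pi-1)-\alpha(\pi+1)$ is exactly what, after being repackaged through $\|u_k\|_{L^2}/\|u\|_{W^{m,p}}\le Ct^{k+\frac12-\frac1p}$ into $\varepsilon(k+\tfrac12-\tfrac1p)$ additional powers of $t$, closes the gap between $\gamma_1(\pi+1)$ and $\pi-2k$ in the first term. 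For the second term, a direct arithmetic check shows that the $(\pi-1)$ surplus factors of $\|u_k\|_{L^2}$ contribute exactly $t^{(\pi-1)(k+\frac12-\frac1p)}$, which combines with $(\pi+1)\gamma_2$ to yield precisely $t^{\pi-2k}$ on the nose. The main obstacle is the exponent bookkeeping, particularly in navigating the cases $p\ge 2$ versus $p<2$ while keeping the final exponent of $t$ under control in both the principal and the lower-order GN term.
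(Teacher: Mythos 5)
Your overall route --- Gagliardo--Nirenberg applied to $\varphi=u_k$ with $j=k$, $l=k+m$, $r=p$, $q=s=2$, Hölder to pass from $L^P$ to $L^1$, raising to the power $\pi+1$, and converting the surplus powers of $\|u_k\|_{L^2}$ into powers of $t$ and $\|u\|_{W^{m,p}}$ --- is exactly the paper's. But two pieces of the exponent bookkeeping break down. First, GN admissibility forces $\alpha\geq j/l=\frac{k}{k+m}$, while factoring $\|u_k\|_{L^2}^{2}$ out of $\|u_k\|_{L^2}^{(1-\alpha)(\pi+1)}$ with a \emph{non-negative} leftover exponent forces $\alpha(\pi+1)\leq\pi-1$, i.e.\ $\alpha\leq\frac{\pi-1}{\pi+1}$. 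The ceiling gives $\frac{k}{k+m}\leq\frac{\pi-1}{\pi+1}$, but with \emph{equality} whenever $m\mid 2k$ (in particular for every $k$ when $m=1$), so in those cases the only admissible $\alpha$ is $\frac{k}{k+m}$, and $P$ is pinned to $\frac{2(k+m)p}{2k+mp}$ --- the paper's choice. Your proposed $P=2$ for $p\geq 2$ does not even meet the admissibility bound: the scaling relation gives $\alpha=\frac{k+\frac12-\frac1P}{k+m+\frac12-\frac1p}$, so $P=2$, $p>2$ yield $\alpha=\frac{k}{k+m+\frac12-\frac1p}<\frac{k}{k+m}$. Conversely, your remark that the ceiling ``ensures $\alpha(\pi+1)\le\pi-1$'' is true only at $\alpha=\frac{k}{k+m}$; any strictly larger admissible $\alpha$ (from taking $P$ above $\frac{2(k+m)p}{2k+mp}$) violates it as soon as $m\mid 2k$. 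The safe and, in general, the only possible move is to fix $\alpha=\frac{k}{k+m}$ outright and let $P$ be determined, as the paper does.

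Second, your claim that the lower-order GN term contributes ``precisely $t^{\pi-2k}$ on the nose'' does not follow from the estimate you actually write down. With $\gamma_2=\frac12-k$ and your trading bound $\|u_k\|_{L^2}\leq Ct^{k+\frac12-\frac1p}\|u\|_{W^{m,p}}$, the $t$-exponent of the second term is $(\pi+1)\bigl(\frac12-k\bigr)+(\pi-1)\bigl(k+\frac12-\frac1p\bigr)=\pi-2k-\frac{\pi-1}{p}$, which is strictly smaller than $\pi-2k$ for $p<\infty$; for $t\leq1$ this is a loss, not a match. The paper's computation lands on $\pi-2k$ exactly because its trading estimate \eqref{gn4} reads $\|u_k\|_{L^2}\leq Ct^{k+\frac12}\|u\|_{W^{m,p}}$, without the $-\frac1p$ that you (correctly, via Lemma~\ref{holder} and Hölder) inserted; the extra factor $t^{\frac{\pi-1}{p}}$ is exactly what separates your exponent from the lemma's. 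You need to reconcile which version of this trading bound you are using, because your stated version does not close the second term to $t^{\pi-2k}$.
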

	\begin{proof} For simplicity, we write $\pi$ instead of $\pi(k,m)$.
		We use the Gagliardo--Nirenberg interpolation inequalities \eqref{gn} with $P =\frac{2(m+k)p}{2k+mp}$, $q=2$, $r=p$, $s=2$, $j=k$, $l=m+k$,  $\alpha=\frac{k}{k+m}$ and $\varphi=u_k$ and we obtain
		\begin{equation}\label{gn1}\left\| u\right\|_{L^P}\leq C\left\|u^{(m)}\right\|^{\alpha}_{L^p}\left\|u_k\right\|_{L^2}^{1-\alpha}+ Ct^{\frac{1}{P}-(k+\frac 12)}\left\|u_k\right\|_{L^2}.
		\end{equation}
		Moreover, using Hölder's inequality,
		\begin{equation}\label{gn2}\left\|u\right\|_{L^1}\leq t^{1-\frac{1}{P}}\left\|u\right\|_{L^P}.\end{equation}
		Using \eqref{gn1} and \eqref{gn2}, we obtain
		$$\left\|u\right\|_{L^1}^{\pi+1}\leq C t^{(\pi+1)\left(1-\frac{1}{P}\right)}\left(\left\|u^{(m)}\right\|^{\alpha(\pi+1)}_{L^p}\left\|u_k\right\|_{L^2}^{(1-\alpha)(\pi+1)}+ t^{(\pi+1)\left(\frac{1}{P}-(k+\frac 12)\right)}\left\|u_k\right\|_{L^2}^{\pi+1}\right)$$
		Thus, if we define $\beta:=1+\frac{2k}{m}$, then, $(1-\alpha)(1+\beta)=2$. We get
		\begin{equation}\label{gn3}\left\|u\right\|_{L^1}^{\pi+1}\leq C t^{(\pi+1)\left(1-\frac{1}{P}\right)}\left(\left\|u\right\|_{W^{m,p}}^{\alpha(\pi+1)}\left\|u_k\right\|_{L^2}^{(1-\alpha)\left(\pi-\beta\right)}+ t^{(\pi+1)\left(\frac{1}{P}-(k+\frac 12)\right)}\left\|u_k\right\|_{L^2}^{\pi-1}\right)\left\|u_k\right\|_{L^2}^2.\end{equation}
		Moreover, 
		\begin{equation}\label{gn4}\left\|u_k\right\|_{L^2}\leq t^{k+\frac 12}\left\|u\right\|_{L^{\infty}}\leq Ct^{k+\frac 12}\left\|u\right\|_{W^{m,p}}.\end{equation}
		Using \eqref{gn4} in \eqref{gn3}, we obtain
		\begin{equation*}\begin{gathered}\left\|u\right\|_{L^1}^{\pi+1}\leq C t^{(\pi+1)\left(1-\frac{1}{P}\right)}\left(\left\|u\right\|_{W^{m,p}}^{\alpha(\pi+1)+(1-\alpha)\left(\pi-\beta\right)}t^{(k+\frac 12)(1-\alpha)\left(\pi-\beta\right)}\right.\\\left. +t^{(\pi+1)\left(\frac{1}{P}-(k+\frac 12)\right)+(\pi-1)\left(k+\frac 12\right)}\left\|u\right\|_{W^{m,p}}^{\pi-1}\right)\left\|u_k\right\|_{L^2}^2.\end{gathered}\end{equation*}
		As $\alpha(\pi+1)+(1-\alpha)\left(\pi-\beta\right)=\pi-1$, we obtain
		$$\left\|u\right\|_{L^1}^{\pi+1}\leq C t^{(\pi+1)\left(1-\frac{1}{P}\right)}\left(t^{(k+\frac 12)(1-\alpha)\left(\pi+1\right)-(2k+1)}+t^{\frac{\pi+1}{P}-(2k+1)}\right)\left\|u\right\|_{W^{m,p}}^{\pi-1}\left\|u_k\right\|_{L^2}^2.$$
		Finally, 
		$$\left(k+\frac 12\right)(1-\alpha)=\frac{(2k+1)m}{2(k+m)}\geq \frac{1}{P},$$
		Thus,
		$$\left\|u\right\|_{L^1}^{\pi+1}\leq C t^{(\pi+1)\left(1-\frac{1}{P}\right)}t^{\frac{\pi+1}{P}-(2k+1)}\left\|u\right\|_{W^{m,p}}^{\pi-1}\left\|u_k\right\|_{L^2}^2.$$
	\end{proof}
	\subsection{Proof of the drift}
	We can now use the Magnus-type representation formula given by Proposition \ref{magnus}, the expansion of $\mathbb{P}\mathcal{Z}_{\pi(k,m)}(t,f,(u,v))(0)$ given by Lemma \ref{dominant}, the estimate of Lemma \ref{cloop} and the interpolation inequality given by Lemma \ref{gna} to prove Theorem \ref{theoremdriftbis}.
	\begin{proof}[Proof of Theorem \ref{theoremdriftbis}] Let $k,m\in\nn^*$ and $p\in[1,+\infty]$. We will write $\pi$ instead of $\pi(k,m)$. Let $e_1:=f_{W_k^1}(0)$, $e_2:=f_{W_k^2}(0)$ and $e_3:=f_{C_{2k-1}}(0)$. Let $\mathbb{P}$ be a linear form given by (BC). The Magnus-type expansion formula given by Proposition \ref{magnus} with $M=\pi$, the equalities \eqref{egbase-s2bad1}, \eqref{egbase-s2bad2} and \eqref{egbase-s2good} and \eqref{dvlplog} give, as $(t,\left\|(u,v)\right\|_{L^1})\to0$,
		\begin{equation}\begin{gathered}\label{fin1}\mathbb{P}x(t;(u,v))=\int_0^t\left(\mathbb{P}(e_1)\frac{u_k^2}{2}+\mathbb{P}(e_2)\frac{v_k^2}{2}+\mathbb{P}(e_3)u_kv_k\right)+\mathcal{O}\left(t\left\|(u_k,v_k)\right\|_{L^2}^2\right.\\\left.+|(u_1,\cdots,u_k,v_1,\cdots,v_k)(t)|^2+\left\|(u,v)\right\|^{\pi+1}_{L^1}+\left|x(t;(u,v))\right|^{1+\frac{1}{\pi}}\right).\end{gathered}\end{equation}
		The closed-loop estimates \eqref{closed-lopp} gives, with $\nu:=\left\lfloor\frac{\pi}{2}\right\rfloor$, 
		\begin{equation}\label{ccl}
			|(u_1,\cdots,u_k,v_1,\cdots,v_k)(t)|^2=\mathcal{O}\left(t\left\|(u_k,v_k)\right\|_{L^2}^2+\left\|(u,v)\right\|_{L^1}^{2\nu+2}+\left|x(t;(u,v))\right|^2\right).
		\end{equation}
		By definition of $\nu$, we have $2(\nu+1)\geq\pi+1$. In particular, as $\left\|(u,v)\right\|_{L^1}\to0$,
		$$\left\|(u,v)\right\|_{L^1}^{2\nu+2}=\mathcal{O}\left(\left\|(u,v)\right\|_{L^1}^{\pi+1}\right).$$
		Using \eqref{ccl} in \eqref{fin1} and the interpolation inequality \eqref{gn-m}, we get
		\begin{equation*}\begin{gathered}\mathbb{P}x(t;(u,v))=\int_0^t\left(\mathbb{P}(e_1)\frac{u_k^2}{2}+\mathbb{P}(e_2)\frac{v_k^2}{2}+\mathbb{P}(e_3)u_kv_k\right)\\+\mathcal{O}\left(\left(t+t^{\pi-2k}\left\|(u,v)\right\|_{W^{m,p}}^{\pi-1}\right)\left\|(u_k,v_k)\right\|_{L^2}^2+\left|x(t;(u,v))\right|^{1+\frac{1}{\pi}}\right).\end{gathered}\end{equation*}
		We prove that the system \eqref{affine-syst} has a drift in the regime $(t,t^{\alpha}\left\|(u,v)\right\|_{W^{m,p}})\to 0$, with $\alpha=\frac{\pi-2k}{\pi-1}$: by definition, there exist $C,\rho>0$ such that, for every $t\in (0,\rho)$, there exists $\eta>0$ \textit{s.t.}\ for every $u,v\in W^{m,p}((0,t),\rr)$ with $\left\|(u,v)\right\|_{W^{m,p}}\leq\eta$,
		\begin{equation}\begin{gathered}\label{reste}\left|\mathbb{P}x(t;(u,v))-\int_0^t\left(\mathbb{P}(e_1)\frac{u_k^2}{2}+\mathbb{P}(e_2)\frac{v_k^2}{2}+\mathbb{P}(e_3)u_kv_k\right)\right|\\\leq C\left(\left(t+t^{\pi-2k}\left\|(u,v)\right\|_{W^{m,p}}^{\pi-1}\right)\left\|(u_k,v_k)\right\|_{L^2}^2+\left|x(t;(u,v))\right|^{1+\frac{1}{\pi}}\right).\end{gathered}\end{equation}
		Let $\gamma:=\frac{\left|\mathbb{P}(e_3)\right|}{\sqrt{\mathbb{P}(e_1)\mathbb{P}(e_2)}}<1$, by hypothesis (BC). Using Young's inequality, we obtain
		\begin{equation}\label{coercive}
			\int_0^t\left(\mathbb{P}(e_1)\frac{u_k^2}{2}+\mathbb{P}(e_2)\frac{v_k^2}{2}+\mathbb{P}(e_3)u_kv_k\right)\geq K\int_0^t\left(u_k^2+v_k^2\right),\end{equation}
		with $K:=\displaystyle\frac{1}{2}\left(1-\gamma\right)\min\left(\mathbb{P}(e_1),\mathbb{P}(e_2)\right)$.
		Thus, for all $t\in\left(0,\min\left(\rho,\frac{K}{4C}\right)\right)$, for all $u,v\in W^{m,p}((0,t),\rr)$, with $\left\|(u,v)\right\|_{W^{m,p}}\leq\min\left(\eta,\left(t^{2k-\pi}\frac{K}{4C}\right)^{\frac{1}{\pi-1}}\right)$, the equalities \eqref{reste} and \eqref{coercive} lead to
		$$\mathbb{P}x(t;(u,v))\geq \frac{K}{2}\Delta(u,v)-C\left|x(t;(u,v))\right|^{1+\frac{1}{\pi}},$$
		with $\Delta:(u,v)\in L^1_{\mathrm{loc}}(\rr^+,\rr)^2\mapsto \displaystyle\int_0^t\left(u_k^2+v_k^2\right)\in\rr^+$. Then, the system \eqref{affine-syst} has a drift along $e_1+e_2$ parallel to $\mathcal{N}^m_k(f)(0)$ with strength $\Delta$ as $\left(t,t^{\alpha}\left\|(u,v)\right\|_{W^{m,p}}\right)\to 0$. This concludes the proof of Theorem \ref{theoremdriftbis}.
	\end{proof}
	\section{Necessary conditions for STLC in the asymmetrical case}\label{sectionassym}
	This section is devoted to the study of systems with asymmetrical drift, as introduced in Example \ref{exassymgeneral}.
	\subsection{Main theorem}
	For $A_1,A_2\subset \nn$, we recall that the sets $S_{A_1}(X)$ and $S_{A_1,A_2}(X)$ are defined in \eqref{art2sa1a2}, \eqref{art2sa}.
		\begin{theorem}\label{theoremdriftbis2} Let $f_0,f_1,f_2$ be analytic vector fields over $\rr^d$ with $f_0(0)=0$. Let $k,k',m,m'\in\nn^*$ be such that $k'\leq k$. We recall that $\pi$ is given by \eqref{defpi}.
		We define \begin{equation}\begin{gathered}\label{defnkk'mm'}\mathcal{N}^{m,m'}_{k,k'}={S}_{\llbracket 1,\pi(k,m)\rrbracket\setminus\left\lbrace 2\right\rbrace}(X)\cap{S}_{\llbracket 0,\pi(k,m)\rrbracket,\llbracket 0,\pi(k',m')\rrbracket}(X)\\\cup\left\lbrace C_{j,l}; \ j\in\llbracket 0,k+k'-2\rrbracket,l\in\nn\right\rbrace\cup\left\lbrace W_{i,l}^1, W_{j,l}^2; \ (i,j)\in\llbracket 1,k-1\rrbracket\times\llbracket 1,k'-1\rrbracket,l\in\nn\right\rbrace,\end{gathered}\end{equation}
		where the last set in the right-hand side is empty if $k=1$ or $k'=1$. We assume that
		\begin{equation}\label{hypo2}f_{W_k^1}(0), \quad f_{W_{k'}^2}(0), \quad f_{C_{k+k'-1}}(0), \quad \mathcal{N}_{k,k'}^{m,m'}(f)(0) \quad\text{ satisfy (BC)}.\end{equation} Then, for all $p,p'\in[1,+\infty]$, the system \eqref{affine-syst} has a drift along $f_{W_k^1}(0)+f_{W_{k'}^2}(0)$ parallel to $\mathcal{N}_{k,k'}^{m,m'}(f)(0)$ with strength $\Delta :(u,v)\in L^1_{\mathrm{loc}}(\rr^+,\rr)^2\mapsto\displaystyle\int_0^t\left(u_k^2+v_{k'}^2\right)\in\rr^+$ as $\left(t,t^{\alpha}\left\|(u,v)\right\|_{W^{m,p}\times W^{m',p'}}\right)\to 0$, where $\alpha:=\frac{\pi(k,m)-2k}{\pi(k,m)-1}$ and $\alpha':=\frac{\pi(k',m')-2k'}{\pi(k',m')-1}$. As a consequence, the system \eqref{affine-syst} is not $W^{m,p}\times W^{m',p'}$-STLC -- see Lemma \ref{lemmedrift}.
	\end{theorem}
	\begin{rmq}
		The case when $k\leq k'$ can be proved in the same way.
	\end{rmq}
	\begin{ex}
		Let us return to Example \ref{exintegrateurintro}. One has
		$$f_{W_k^1}(0)= f_{W_{k'}^2}(0)=2e_{k+k'+1}, \quad f_{C_1}(0)=\alpha e_{k+k'+1},\quad \mathcal{N}^{m,m'}_{k,k'}(f)(0)=\spn\left(e_i\right)_{ i\in\llbracket 1,k+k'\rrbracket},$$
		for any integer $m,m'\in\nn^*$. Thus, the form $\mathbb{P}:(x_i)_{i\in\llbracket1, k+k'+1\rrbracket}\in\rr^{k+k'+1}\mapsto x_{k+k'+1}\in\rr$ ensures that the system is not $W^{m,p}\times W^{m',p'}$-STLC for every $p,p'\in[1,+\infty]$, when $|\alpha|< 2$.
	\end{ex}
	Using the same approach as for Theorem \ref{theoremdriftbis}, Theorem \ref{theoremdriftbis2} can be reformulated in an effective form as follows. Once again, the following statement is directly derived from Theorem \ref{theoremdriftbis2} \textit{via} Proposition \ref{bclink}. One recalls that the set $\mathcal{N}_{k,k'}^{m,m'}$ is defined in \eqref{defnkk'mm'}.
	\begin{theorem}\label{theoremdrift2}  Let $f_0,f_1,f_2$ be analytic vector fields over $\rr^d$ with $f_0(0)=0$.
		Let $k,k',m,m' \in\nn^*$ be such that $k'\leq k$. Let $\sigma:\rr^d\to\rr^d/\mathcal{N}_{k,k'}^{m,m'}(f)(0)$ be the canonical surjection, $\tilde{e}_1:=\sigma\left(f_{W_k^1}(0)\right)$, $\tilde{e}_2:=\sigma\left(f_{W_{k'}^2}(0)\right)$ and $\tilde{e}_3:=\sigma\left(f_{C_{k+k'-1}}(0)\right)$.
		If the system \eqref{affine-syst} is $W^{m,p}\times W^{m',p'}$-STLC for given $p,p'\in[1,+\infty]$, one of the following conditions is satisfied
		\begin{enumerate}
			\item[\textbullet] $\tilde{e}_1=0$ or $\tilde{e}_2=0$,
			\item[\textbullet] $(\tilde{e}_1,\tilde{e}_2)$ is a linearly independent family and $\tilde{e}_3=a\tilde{e}_1+b\tilde{e}_2$ with $ab\geq \frac{1}{4}$,
			\item[\textbullet] $\tilde{e}_2=\beta\tilde{e}_1$ with $\beta<0$,
			\item[\textbullet] $\tilde{e}_2=\beta \tilde{e}_1$, $\tilde{e}_3=\gamma \tilde{e}_1$ with $\beta\leq\gamma^2$ and $\beta\neq 0$.
		\end{enumerate}
	\end{theorem}
	\begin{ex}
		Let us return to Example \ref{exintegrateurintro}. One has $\tilde{e}_1=2$, $\tilde{e}_2=2$ and $\tilde{e}_3=\alpha$. If  $|\alpha|<2$, the four points of Theorem \ref{theoremdrift2} are not satisfied. By contraposition, for every $m,m'\in\nn^*$, $p,p'\in [1,+\infty]$, the system is not $W^{m,p}\times W^{m',p'}$-STLC.
	\end{ex}
	\begin{ex}Theorems \ref{theoremdriftbis2} and \ref{theoremdrift2} can be used to study \eqref{exassym} with $k=2$, $k'=1$, $m,m'\in\nn^*$ and $p,p'\in[1,+\infty]$.\end{ex}
	\subsection{A new truncation in the Magnus-type representation formula}
	In order to prove Theorem \ref{theoremdriftbis2}, we first give an asymmetrical Magnus-type representation formula. This is the purpose of the following statement.
	\begin{prop}[Asymmetrical Magnus expansion]\label{magnus-assym}
		Let $M,N\in\nn^*$ be such that $N\leq M$, let $\delta,T>0$ and $f_0, f_1, f_2:B(0,2\delta)\to\rr^d$ be analytic vector fields  with $f_0(0)=0$ and $T\left\|f_0\right\|_{\infty}\leq\delta$. For $u,v\in L^1((0,T),\rr)$, as $\|(u,v)\|_{L^1} \to 0$,
		\begin{equation}\label{x=e^Z2bis+O}
			x(t;(u,v))=\mathcal{Z}_{M,N}(t;f,(u,v))(0)+ \mathcal{O}\left(\left\|u\right\|_{L^1}^{M+1}+\left\| v\right\|_{L^1}^{N+1}+\left|x(t;(u,v))\right|^{1+\frac 1M}\right),
		\end{equation}
		where 
		\begin{equation}\label{zmn}
			\mathcal{Z}_{M,N}(t;f,(u,v)):=\sum_{\substack{b\in\mathcal{B}_{\llbracket 1,M\rrbracket }\\n_2(b)\leq N}}\eta_b(t,(u,v))f_b.
		\end{equation}
	\end{prop}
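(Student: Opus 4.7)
The plan is to reduce the asymmetrical formula to the symmetric Magnus expansion of Proposition \ref{magnus} applied at order $M$, and then to control the discarded terms $\mathcal{Z}_M - \mathcal{Z}_{M,N}$ via the black-box Propositions \ref{estim-mainterm} and \ref{estim-crossterm}.

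First, I would apply Proposition \ref{magnus} with truncation $M$, which yields
\begin{equation*}
x(t;(u,v)) = \mathcal{Z}_M(t;f,(u,v))(0) + \mathcal{O}\left(\|(u,v)\|_{L^1}^{M+1} + \|x(t;(u,v))\|^{1+\frac{1}{M}}\right).
\end{equation*}
Since $N \leqslant M$ and we work in the regime $\|(u,v)\|_{L^1} \to 0$, one has $\|v\|_{L^1}^{M+1} \leqslant \|v\|_{L^1}^{N+1}$, so $\|(u,v)\|_{L^1}^{M+1} = \mathcal{O}(\|u\|_{L^1}^{M+1} + \|v\|_{L^1}^{N+1})$ and the remainder is already of the form required in \eqref{x=e^Z2bis+O}.

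Next, by definition of $\mathcal{Z}_{M,N}$,
\begin{equation*}
(\mathcal{Z}_M - \mathcal{Z}_{M,N})(t;f,(u,v))(0) = \sum_{b \in \mathcal{E}} \eta_b(t,(u,v)) f_b(0), \qquad \mathcal{E} := \{ b \in \mathcal{B}_{\llbracket 1,M\rrbracket} \, : \, n_2(b) \geqslant N+1 \}.
\end{equation*}
The remaining task is to show that this sum is $\mathcal{O}(\|v\|_{L^1}^{N+1})$. To this end, I would apply Propositions \ref{estim-mainterm} and \ref{estim-crossterm} to $\mathcal{E}$ with $\Xi(t,(u,v)) := \|v\|_{L^1}^{N+1}$ and $L := M$ (the assumption $\Xi = \mathcal{O}(1)$ holds in the asymptotic regime). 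The analytic-norm bounds these propositions produce dominate the evaluation at $0 \in \rr^d$, so the argument reduces to checking the pointwise hypotheses \eqref{bbest1} and \eqref{bbest2}. Iterating the inductive definition of $\xi_b$ from Definition \ref{coordinatesofthesecondkind} together with Hölder's inequality yields, in the spirit of Proposition \ref{estimationcoordonnées}, a homogeneity estimate of the form $|\xi_b(t,(u,v))| \leqslant (ct)^{|b|}/|b|! \cdot t^{-\sigma(b)}\, \|u\|_{L^1}^{n_1(b)}\, \|v\|_{L^1}^{n_2(b)}$ with $\sigma(b) \leqslant |b|$. For $b \in \mathcal{E}$, one has $n_2(b) \geqslant N+1$, and using $\|u\|_{L^1}, \|v\|_{L^1} \leqslant 1$ one extracts the required factor $\|v\|_{L^1}^{N+1}$, establishing \eqref{bbest1} with a suitable $\sigma \leqslant M$.

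For the cross-term hypothesis \eqref{bbest2}, observe that $\text{supp}\,\mathcal{F}(b_1,\ldots,b_q) \cap \mathcal{E} \neq \emptyset$ forces $\sum_j n_2(b_j) \geqslant N+1$, because the total $X_2$-count is preserved when forming a bracket of $b_1,\ldots,b_q$. Then the choice $\alpha_i := n_2(b_i)/\sum_j n_2(b_j) \in [0,1]$ satisfies $\sum_i \alpha_i = 1$, and the bound $\|v\|_{L^1}^{n_2(b_i)} \leqslant \|v\|_{L^1}^{(N+1)\alpha_i}$ (valid since $(N+1)\alpha_i \leqslant n_2(b_i)$ and $\|v\|_{L^1} \leqslant 1$) produces \eqref{bbest2}. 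The main obstacle I foresee is the careful formal verification of the homogeneity estimate on $\xi_b$ and the bookkeeping of the exponents $(\alpha_i)$ in the cross-term hypothesis; the rest is a direct assembly of the two steps above, giving \eqref{x=e^Z2bis+O}.
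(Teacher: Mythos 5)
Your proposal is correct and follows essentially the same route as the paper's: both decompose $\mathcal{Z}_M-\mathcal{Z}_{M,N}$ and absorb it into the $\|v\|_{L^1}^{N+1}$ remainder via analytic estimates, observing that the extra terms all carry $n_2(b)\geqslant N+1$. The one (cosmetic) difference is in how the tail sum is bounded: the paper invokes the estimate on the pseudo-first-kind coordinates $\eta_b$ directly from \cite[Proposition 52]{Beauchard_2023}, whereas you split $\eta_b=\xi_b+(\eta_b-\xi_b)$ and reuse the stated black-box Propositions \ref{estim-mainterm} and \ref{estim-crossterm} — a legitimate and arguably tidier variant, since your observation that the $X_2$-multidegree is preserved under bracketing (forcing $\sum_j n_2(b_j)\geqslant N+1$ and yielding admissible exponents $\alpha_i=n_2(b_i)/\sum_j n_2(b_j)$) is exactly what makes the hypotheses \eqref{bbest1}--\eqref{bbest2} check out.
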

	\begin{proof}By definition
		$$\mathcal{Z}_{M}(t;f,(u,v))(0)=\mathcal{Z}_{M,N}(t;f,(u,v))(0)+\sum_{\substack{b\in\mathcal{B}_{\llbracket 1,M\rrbracket},\\n_2(b)\geq N+1}}\eta_b(t,(u,v))f_b(0).$$
		We use analytic estimates, as for Propositions \ref{estim-mainterm} and \ref{estim-crossterm} and the estimate on the coordinates of pseudo-first kind given by \cite[Proposition $52$]{Beauchard_2023} to obtain, as $\left\|(u,v)\right\|_{L^1}\to 0$, 
		$$\sum_{\substack{b\in\mathcal{B}_{\llbracket 1,M\rrbracket},\\n_2(b)\geq N+1}}|||\eta_b(t,(u,v))f_b|||_{r'}=\mathcal{O}\left(\left\|v\right\|_{L^1}^{N+1}\right),$$
		for a given $r'>0$.
		The Magnus-type representation formula \eqref{x=Z2+O} leads to the conclusion.
	\end{proof}
	We are now in a position to prove Theorem \ref{theoremdriftbis2}. Let $k,k',m,m'\in\nn^*$.  From now on, we will sometimes refer to $\pi(k,m)$ as $\pi$, $\pi(k',m')$ as $\pi'$ and $\mathcal{N}_{k,k'}^{m,m'}$ as $\mathcal{N}$.
	
	\subsection{Dominant part of the logarithm}
	\begin{lm}\label{dominant2} Let $k,k',m,m'\in\nn^*$ be such that $k'\leq k$. Let $\mathcal{P}$ be a linear form satisfying $\restriction{\mathcal{P}}{\mathcal{N}(f)(0)}\equiv0$. Then, as $(t,\left\|(u,v)\right\|_{L^1})\to0$,
		\begin{equation}\begin{gathered}\label{dvlplog2}\mathcal{P}\mathcal{Z}_{\pi,\pi'}(t;f,(u,v))(0)=\mathcal{P}\left(f_{W^1_k}(0)\right)\xi_{W_k^1}(t,(u,v))+\mathcal{P}\left(f_{W^2_{k'}}(0)\right)\xi_{W_{k'}^2}(t,(u,v))\\+\mathcal{P}\left(f_{C_{k+k'-1}}(0)\right)\xi_{C_{k+k'-1}}(t,(u,v))+\mathcal{O}\left(t\left\|(u_k,v_{k'})\right\|_{L^2}^2+|(u_1,\cdots,u_k,v_1,\cdots,v_{k'})(t)|^2\right).\end{gathered}\end{equation}
	\end{lm}
	\begin{proof} We fix $M=\pi$,  $\mathcal{N}=\mathcal{N}_{k,k'}^{m,m'}$  and $r=1$. Let $\mathcal{E}:=\mathcal{B}_{\llbracket 1,\pi\rrbracket}\cap\{b\in\Br(X); \ n_2(b)\leq \pi'\}\setminus\left(\mathcal{N}\cup\{W_k^1,W_{k'}^2,C_{k+k'-1}\}\cup\{C_{j,l}; \ l+\lfloor\frac{j}{2}\rfloor\leq k-2\}\right)$ and $N=\pi'$.
		\begin{enumerate}
			\item\textit{Estimates of the main terms:} let $b\in\mathcal{E}$. Then $n(b)=2$.
			\begin{enumerate}
				\item If $b\in \mathcal{E}\cap\{b\in\Br(X); \ n_1(b)=2\}$, $b=W_{j,l}^1$ with $j>k$ or $\left(j=k\text{ and }l\geq 1\right)$. Thus $|b|\geq 2k+2$ and the estimate \eqref{estbase-s2bad1} with $j_0=k$ and $p=1$ gives \eqref{bbest1} with $\Xi(t,(u,v)):=t\left\|u_k\right\|_{L^2}^2$.
				\item If $b\in \mathcal{E}\cap\{b\in\Br(X); \ n_2(b)=2\}$, $b=W_{j,l}^2$ with $j>k'$ or $\left(j=k'\text{ and }l\geq 1\right)$. Thus $|b|\geq 2k'+2$ and the estimate \eqref{estbase-s2bad2} with $j_0=k'$ and $p=1$ gives \eqref{bbest1} with $\Xi(t,(u,v)):=t\left\|v_{k'}\right\|_{L^2}^2$.
				\item If $b\in \mathcal{E}\cap\mathcal{B}_{2,good}$ and $b=C_{j,l}$ with $j\geq k+k'$ or $\left(j\geq k+k'-1\text{ and }l\geq 1\right)$. Then, $|b|\geq k+k'+2$ and the estimate \eqref{estbase-s2good1} with $p=q=2$ gives \eqref{bbest1} with $\Xi(t,(u,v)):=t\left\|(u_k,v_{k'})\right\|_{L^2}^2$.
			\end{enumerate}
			By applying Proposition \ref{estim-mainterm} to each of the three sets in the partition of $\mathcal{E}$ given above (once with $\sigma=L=2k+2$, $\mathfrak{b}_1=W_k^1$, once with $\sigma=L=2k'+2$, $\mathfrak{b}_1=W_{k'}^1$ and once with $\sigma=L=k+k'+2$, $\mathfrak{b}_1=C_{k+k'-1}$), we obtain as $(t,\left\|(u,v)\right\|_{L^1})\to 0$,
			$$\sum_{b\in\mathcal{E}}|||\xi_b(t,(u,v))f_b|||_{r'}=\mathcal{O}\left(t\left\|(u_k,v_{k'})\right\|_{L^2}^2\right),$$
			for a given $r'>0$.
			We finally  examine the brackets in $\{C_{j,l}; \ l+\lfloor\frac{j}{2}\rfloor\leq k-2\}$, where the set is finite. Estimate \eqref{estbase-s2good1} with $p=q=2$ gives, as $(t,\left\|(u,v)\right\|_{L^1})\to 0$,
			$$\sum_{l+\lfloor\frac{j}{2}\rfloor\leq k-2}|||\xi_{C_{j,l}}(t,(u,v))f_{C_{j,l}}|||_{r'}=\mathcal{O}\left(t\left\|(u_k,v_{k'})\right\|_{L^2}^2+|(u_1,\cdots,u_k)(t)|^2\right).$$
			Finally,
			$$\sum_{\substack{b\in\mathcal{B}_{\llbracket 1,\pi\rrbracket},\ n_2(b)\leq\pi', \\ b\notin\mathcal{N}\cup\{\mathfrak{b}_1,\mathfrak{b}_2,\mathfrak{b}_3\}}}|||\xi_b(t,(u,v))f_b|||_{r'}=\mathcal{O}\left(t\left\|(u_k,v_{k'})\right\|_{L^2}^2+\left|(u_1,\cdots,u_k)(t)\right|^2\right).$$
			\item\textit{Estimates of cross terms:} we apply Proposition \ref{estim-crossterm} with the set $\mathcal{E}:=\mathcal{B}_{\llbracket 1,\pi\rrbracket}\cap\{b\in\Br(X); \ n_2(b)\leq \pi'\}\setminus\mathcal{N}$. Let $b_1\geq\cdots\geq b_q\in\mathcal{B}\setminus\left\lbrace X_0\right\rbrace$ be such that $n_1(b_1)+\cdots+n_1(b_q)\leq \pi$, $n_2(b_1)+\cdots+n_2(b_q)\leq \pi'$ and $\text{supp}\mathcal{F}(b_1,\cdots,b_q)\not\subset\mathcal{N}$. Let $i\in\llbracket 1,q\rrbracket$.
			\begin{enumerate}
				\item If $b_i=M_j^1$ with $j\in\llbracket 0,k-1\rrbracket$ or $b_i=M_l^2$ with $l\in\llbracket 0,k'-1\rrbracket$, then by \eqref{egbase-s11} and \eqref{egbase-s12},
				$$|\xi_{b_i}(t,(u,v))|\leq |(u_{j+1},v_{l+1})(t)|.$$
				Then, the estimate \eqref{bbest2} is verified with $\sigma_i=j+1$ or  $\sigma_i=l+1$, $\alpha_i=\frac{1}{2}$ and $\Xi(t,(u,v))=|(u_1,\cdots,u_k,v_1,\cdots,v_k')(t)|^2$.
				\item If $b_i=M_j^1$ with $j\geq k$, $|b_i|\geq k+1$ and the estimate \eqref{estbase-s11} with $j_0=k$ and $p=2$ gives
				$$|\xi_{b_i}(t,(u,v))|\leq\frac{(ct)^{|b_i|}}{|b_i|!}t^{-(k+1)}t^{\frac 12}\left\|u_k\right\|_{L^2}=\frac{(ct)^{|b_i|}}{|b_i|!}t^{-(k+1)}\left(t\left\|u_k\right\|_{L^2}^2\right)^{\frac{1}{2}}.$$
				We obtain \eqref{bbest2} with $\sigma_i=k+1$ and $\alpha_i=\frac{1}{2}$. 
				\item If $b_i=M_l^2$ with $l\geq k'$, $|b_i|\geq k'+1$ and the estimate \eqref{estbase-s12} with $j_0=k'$ and $p=2$ gives
				$$|\xi_{b_i}(t,(u,v))|\leq\frac{(ct)^{|b_i|}}{|b_i|!}t^{-(k'+1)}t^{\frac 12}\left\|v_{k'}\right\|_{L^2}=\frac{(ct)^{|b_i|}}{|b_i|!}t^{-(k'+1)}\left(t\left\|v_{k'}\right\|_{L^2}^2\right)^{\frac{1}{2}}.$$
				We obtain \eqref{bbest2} with $\sigma_i=k'+1$ and $\alpha_i=\frac{1}{2}$. 
			\end{enumerate}
			Since $\text{supp}\mathcal{F}(b_1,\cdots,b_q)\not\subset\mathcal{N}$, we have $q=2$ and $b_1,b_2\in\mathcal{B}_1$. Moreover, $\alpha_1+\alpha_2=1$.
		\end{enumerate}
		Thus, the definition of $\mathcal{Z}_{\pi,\pi'}$ -- see \eqref{zmn} -- leads to the result.
	\end{proof}
	\subsection{Vectorial relations}
	\begin{lm}[A bracket relation]\label{bracketdev2} Let $k,k',m,m'\in\nn^*$ be such that $k'\leq k$.
		For all $l\in\llbracket 0,k'-1\rrbracket$, for all $(\alpha_j)_{j\in\llbracket 0,l+k-k'\rrbracket}\in\rr^{l+k-k'+1},(\beta_j)_{j\in\llbracket 0,l\rrbracket}\in\rr^{l+1}$, we consider the bracket 
		$$B:=\displaystyle\sum_{j=0}^{l+k-k'}\alpha_jM_j^1+\sum_{j=0}^l\beta_jM_j^2.$$
		Then, the following expansion holds $$\left[B0^{k'-l-1},B0^{k'-l}\right]\in\alpha_{l+k-k'}^2W_k^1+\beta_{l}^2W_{k'}^2+2(-1)^{\lfloor\frac{k-k'}{2}\rfloor}\alpha_{l+k-k'}\beta_{l}C_{k+k'-1}+\mathcal{N}_{k,k'}^{m,m'}.$$
	\end{lm}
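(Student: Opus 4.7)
The plan is to adapt the proof of Lemma \ref{bracketdev} (its symmetric analog) to the asymmetric setting. First, I would expand the bracket $[B0^{k'-l-1}, B0^{k'-l}]$ by bilinearity, using the basic identification $M_j^i 0^\nu = M_{j+\nu}^i$ in $\mathcal{L}(X)$. This produces a sum of elementary brackets of the form $[M_i^p, M_{i'}^q]$ with $p,q\in\{1,2\}$, where the indices $i$ range in $\llbracket k'-l-1, k-1\rrbracket$ (resp.\ $\llbracket k'-l-1, k'-1\rrbracket$) for the $X_1$-part (resp.\ $X_2$-part) of $B0^{k'-l-1}$, and similarly with indices shifted by $1$ for $B0^{k'-l}$.

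Second, I would isolate the three dominant contributions. The bracket $[M_{k-1}^1, M_k^1] = W_k^1$ only arises from the top $M^1$-coefficients on both sides, yielding $\alpha_{l+k-k'}^2 W_k^1$. Likewise $[M_{k'-1}^2, M_{k'}^2] = W_{k'}^2$ arises with factor $\beta_l^2$. The ``top'' mixed terms combine into
\[
\alpha_{l+k-k'}\beta_l\,\bigl(G_{k-1,k'} - G_{k,k'-1}\bigr), \qquad G_{i,j}:=[M_i^1, M_j^2].
\]
All remaining elementary brackets have strictly lower total degree in the relevant variables, and a short check shows they reduce modulo Jacobi to either $W_{j,l'}^1$ with $j\leqslant k-1$, $W_{j,l'}^2$ with $j\leqslant k'-1$, or $C_{j,l'}$ with $j\leqslant k+k'-2$, all of which lie in $\mathcal{N}_{k,k'}^{m,m'}$. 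I would also check that the $n_1,n_2$ bounds defining $\mathcal{N}_{k,k'}^{m,m'}$ are satisfied for the auxiliary brackets $b$ with $n(b)\neq 2$ produced by the expansion.

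Third, the crux is to reduce $G_{k-1,k'}-G_{k,k'-1}$ to a multiple of $\mathrm{ev}(C_{k+k'-1})$ modulo $\mathcal{N}_{k,k'}^{m,m'}$. The key identity, obtained from Jacobi with $a=M_{i-1}^1,\ b=X_0,\ c=M_j^2$, is
\[
G_{i,j} + G_{i-1,j+1} = [G_{i-1,j},X_0],
\]
so when $i+j=k+k'-1$ the right-hand side is an element of length $k+k'+1$ with $n=2$ that reduces to $\pm C_{k+k'-2,1}$ modulo $\mathcal{N}$, hence lies in $\mathcal{N}_{k,k'}^{m,m'}$. Iterating this shift yields $G_{i,j}\equiv (-1)^r G_{i-r,j+r}\ (\mathrm{mod}\ \mathcal{N})$ for any $r$ keeping $i-r\geqslant 0,\ j+r\geqslant 0$. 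Pushing both $G_{k-1,k'}$ and $G_{k,k'-1}$ to the canonical indices $(i^\ast,j^\ast)=(\lfloor(k+k')/2\rfloor,\lfloor(k+k'-1)/2\rfloor)$ associated with $C_{k+k'-1}$, the difference becomes $2(-1)^{k-1-i^\ast}G_{i^\ast,j^\ast}$, and the unconventional $(-1)^j$-swap in the definition of $C_{k+k'-1}$ contributes one more sign $(-1)^{k+k'-1}$. Combining these and splitting on the parity of $k+k'$ shows $(-1)^{k-1-i^\ast+k+k'-1}=(-1)^{\lfloor(k-k')/2\rfloor}$, producing the announced coefficient $2(-1)^{\lfloor(k-k')/2\rfloor}$.

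The main obstacle will be the bookkeeping of signs in the Jacobi reduction: one must carefully verify that each intermediate bracket produced by the ``shift'' identity indeed lies in $\mathcal{N}_{k,k'}^{m,m'}$ (using both the $n\neq 2$ clause of the symmetric-difference set and the $C_{j,l}$, $W_{\cdot,l}^{1,2}$ clauses), and then track the parity arguments case-by-case on $k+k'\bmod 2$ to recover the compact form $(-1)^{\lfloor(k-k')/2\rfloor}$. The mixed-index constraints $n_1(b)\leqslant\pi(k,m)$, $n_2(b)\leqslant\pi(k',m')$ that distinguish $\mathcal{N}_{k,k'}^{m,m'}$ from $\mathcal{N}_k^m$ are automatically met here because all auxiliary brackets produced satisfy $n_1,n_2\leqslant 2$, so no substantive new difficulty appears compared to the symmetric case on this front.
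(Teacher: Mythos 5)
Your proposal is correct and reaches the paper's result, but the technical device for the cross-term reduction is different. The paper, after the same bilinear expansion into the four blocks $[M^1,M^1]$, $[M^1,M^2]$, $[M^2,M^1]$, $[M^2,M^2]$, invokes a closed-form expansion lemma (Lemma \ref{expansion}): it expresses $[M_p^2,M_p^1 0^\nu]$ as $\sum_{r=0}^\nu \gamma_r^\nu C_{2p+r,\nu-r}$ with the leading coefficient $\gamma_\nu^\nu = (-1)^{1+\lfloor(\nu+1)/2\rfloor}$, applied at $p=k'$, $\nu=k-k'-1$; the sign $(-1)^{\lfloor (k-k')/2\rfloor}$ then comes out of that explicit coefficient. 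You instead work modulo $\mathcal{N}_{k,k'}^{m,m'}$ from the start and transport indices along the anti-diagonal $i+j=k+k'-1$ via the Jacobi shift $G_{i,j}+G_{i-1,j+1}=[G_{i-1,j},X_0]$, observing that the right-hand side is a combination of $C_{j',l'}$ with $l'\geqslant 1$, hence $j'\leqslant k+k'-2$, hence in $\mathcal{N}$; you then read off the sign from the parity of the number of shifts plus the $(-1)^j$-swap in the definition of $C_{k+k'-1}$. Your route avoids ever determining the coefficient lists $\gamma_r^\nu$ (and $\alpha_r^\nu$) and is in that sense leaner; the paper's route packages the reduction into a reusable induction lemma (also used in the symmetric Lemma \ref{bracketdev}) and gives a cleaner one-line identification of the leading coefficient rather than a parity bookkeeping. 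Both are sound. One minor point: your aside about ``auxiliary brackets $b$ with $n(b)\neq 2$'' is vacuous here, since every bracket produced by the bilinear expansion of $[B0^{k'-l-1},B0^{k'-l}]$ has $n(b)=2$ exactly; the membership in $\mathcal{N}_{k,k'}^{m,m'}$ for the discarded terms is supplied entirely by the explicit $C_{j,l}$ and $W^1_{\cdot,l}$, $W^2_{\cdot,l}$ clauses, not by the $S$-clause.
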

	This lemma is proved in Appendix \ref{appendicebracket2}. This is a generalization of Lemma \ref{bracketdev}, in the asymmetrical case.
	\begin{lm}\label{libre2}
		Let $k,k',m,m'\in\nn^*$ be such that $k'\leq k$ and $\nu:=\left\lfloor\frac{\pi}{2}\right\rfloor$, $\nu':=\left\lfloor\frac{\pi'}{2}\right\rfloor$. Assume that \eqref{hypo2} is verified. Then, 
		\begin{enumerate}
			\item the family 
			$\left(f_{M_0^1}(0),\cdots f_{M_{k-1}^1}(0),f_{M_0^2}(0),\cdots,f_{M_{k'-1}^2}(0)\right)$ is linearly independent.
			\item if $\nu\geq 2$, \begin{multline*}\spn\left(f_{M_0^1}(0),\cdots,f_{M_{k-1}^1}(0),f_{M_0^2}(0),\cdots,f_{M_{k'-1}^2}(0)\right)\\\cap{S}_{\llbracket 2,\nu\rrbracket}(f)(0)\cap{S}_{\llbracket 0,\nu\rrbracket,\llbracket 0,\nu'\rrbracket}(f)(0)=\left\lbrace 0\right\rbrace.\end{multline*}
		\end{enumerate}
	\end{lm}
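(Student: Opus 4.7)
The plan is to proceed by contradiction, adapting the strategy of Lemma \ref{libre} to the asymmetrical setting. For part~2, I suppose there exist $(\alpha_j)_{j\in\llbracket 0,k-1\rrbracket}\in\rr^k$, $(\beta_j)_{j\in\llbracket 0,k'-1\rrbracket}\in\rr^{k'}$ not all zero and $B\in S_{\llbracket 2,\nu\rrbracket}(X)\cap S_{\llbracket 0,\nu\rrbracket,\llbracket 0,\nu'\rrbracket}(X)$ such that $f_{B_1}(0)=0$, where
\[
B_1:=\sum_{j=0}^{k-1}\alpha_j M_j^1+\sum_{j=0}^{k'-1}\beta_j M_j^2+B.
\]
Part~1 reduces to the same setup with $B=0$. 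The aim is to produce, via Lemma \ref{bracketdev2}, an identity of the form
\[
A^2 f_{W_k^1}(0)+D^2 f_{W_{k'}^2}(0)+2(-1)^{\lfloor(k-k')/2\rfloor}AD\,f_{C_{k+k'-1}}(0)\in\mathcal{N}_{k,k'}^{m,m'}(f)(0),
\]
with $(A,D)\neq(0,0)$, which is then contradicted by Lemma \ref{contr} thanks to the bracket condition \eqref{hypo2}.

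I introduce $K_1:=\max\{j\in\llbracket 0,k-1\rrbracket;\ \alpha_j\neq 0\}$ and $K_2:=\max\{j\in\llbracket 0,k'-1\rrbracket;\ \beta_j\neq 0\}$ (with the convention $-\infty$ for an empty set), and select the index $l\in\llbracket 0,k'-1\rrbracket$ invoked in Lemma \ref{bracketdev2} by cases. If $K_2\geqslant 0$ and $K_1-K_2\leqslant k-k'$, take $l=K_2$; if $K_1\geqslant k-k'$ and $K_1-K_2>k-k'$, take $l=K_1-(k-k')$; in particular when $K_1=-\infty$ I recover $l=K_2$, and when $K_2=-\infty$ and $K_1\geqslant k-k'$ I recover $l=K_1-(k-k')$. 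In each of these cases the pair $(A,D)=(\alpha_{l+k-k'},\beta_l)$ is nonzero and matches what Lemma \ref{bracketdev2} delivers when applied directly to $B_1$. The parasitic contributions coming from cross-brackets with $B$ and from $[B0^{a},B0^{a+1}]$ carry a total $n$-weight at least $3$ bounded by $2\nu\leqslant\pi$, and an $n_2$-weight bounded by $2\nu'\leqslant\pi'$, so they lie in $S_{\llbracket 1,\pi\rrbracket\setminus\{2\}}(X)\cap S_{\llbracket 0,\pi\rrbracket,\llbracket 0,\pi'\rrbracket}(X)\subseteq\mathcal{N}_{k,k'}^{m,m'}(X)$ and are absorbed into the $\mathcal{N}$-remainder.

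The delicate configuration is when every $\beta_j$ vanishes and $K_1<k-k'$, since no admissible $l$ then places a nonzero $\alpha$-coefficient at position $l+k-k'$. I handle it by a preliminary $\ad_{X_0}$-shift: set $\mu:=k-k'-K_1\geqslant 1$ and $B_1':=B_10^\mu$. A short induction based on $f_0(0)=0$ and $f_{B_1}(0)=0$ shows $f_{B_1'}(0)=0$, and since $M_j^10^\mu=M_{j+\mu}^1$ in $\mathcal{L}(X)$, one can rewrite $B_1'=\sum_{j=\mu}^{k-k'}\alpha_{j-\mu}M_j^1+B0^\mu$, whose leading $M^1$-coefficient sits at index $k-k'$ and equals $\alpha_{K_1}\neq 0$. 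Applying Lemma \ref{bracketdev2} with $l=0$ to $B_1'$ and evaluating at zero produces $\alpha_{K_1}^2 f_{W_k^1}(0)\in\mathcal{N}_{k,k'}^{m,m'}(f)(0)$, which Lemma \ref{contr} excludes via the degenerate pair $(A,D)=(\alpha_{K_1},0)$.

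The hard part is precisely this asymmetric bookkeeping: the set of $M^1$-indices directly accessible to Lemma \ref{bracketdev2} is $\{l+k-k':l\in\llbracket 0,k'-1\rrbracket\}=\llbracket k-k',k-1\rrbracket$, which misses $\llbracket 0,k-k'-1\rrbracket$, and the $\ad_{X_0}$-shift is exactly what rescues these lower indices. A secondary verification, repeated throughout the case analysis, is that all higher-order brackets originating from the $B$-term fall into $\mathcal{N}_{k,k'}^{m,m'}(X)$; this reduces to the inequalities $2\nu\leqslant\pi$ and $2\nu'\leqslant\pi'$ combined with the definition of $\mathcal{N}_{k,k'}^{m,m'}$ as the intersection of $S_{\llbracket 1,\pi\rrbracket\setminus\{2\}}$ with $S_{\llbracket 0,\pi\rrbracket,\llbracket 0,\pi'\rrbracket}$.
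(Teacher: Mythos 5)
Your proof is correct and follows essentially the same strategy as the paper: apply Lemma \ref{bracketdev2} to $B_1$, invoke Lemma \ref{contr}, and absorb the $B$-contributions into $\mathcal{N}_{k,k'}^{m,m'}$ via $S_{\llbracket 3,2\nu\rrbracket}\cap S_{\llbracket 0,2\nu\rrbracket,\llbracket 0,2\nu'\rrbracket}(X)\subseteq\mathcal{N}_{k,k'}^{m,m'}$, with your choice of $l$ coinciding with the paper's $K=\max\{j:(\alpha_{k-k'+j},\beta_j)\neq(0,0)\}$ in the non-degenerate case. Your $0^{\mu}$-shift in the degenerate case (all $\beta_j=0$, $K_1<k-k'$) generates the identical bracket $[B_10^{k-1-K_1},B_10^{k-K_1}]$ that the paper forms directly, so this is only a cosmetic repackaging that lets you invoke Lemma \ref{bracketdev2} with $l=0$ rather than asserting the corresponding expansion separately.
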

	\begin{proof} We prove the result in the same way as Lemma \ref{libre}. Let us start with the second point. Assume by contradiction that there exist $(\alpha_j)_{j\in\llbracket 0,k-1\rrbracket}\in\rr^k,(\beta_j)_{j\in\llbracket 0,k'-1\rrbracket}\in\rr^{k'}$, not all zero and $B\in{S}_{\llbracket 2,\nu\rrbracket}(X)\cap {S}_{\llbracket 0,\nu\rrbracket,\llbracket 0,\nu'\rrbracket}(X)$ such that $f_{B_1}(0)=0$, with 
		$$B_1:=\sum_{j=0}^{k-1}\alpha_jM_j^1+\sum_{j=0}^{k'-1}\beta_jM_j^2+B.$$ 
		\begin{enumerate} 
			\item Firstly, assume that $\alpha_{k-k'}=\cdots=\alpha_{k-1}=\beta_0=\cdots=\beta_{k'-1}=0$. We consider $K:=\max\{j\in\llbracket 0,k-k'-1\rrbracket; \ \alpha_j\neq 0\}.$
			As $f_0(0)=0$, we have $f_{B_2}(0)=0$ with
			$$B_2:=[B_10^{k-1-K},B_10^{k-K}]\in \alpha_K^2W_k^1+\mathcal{N}+{S}_{\llbracket 3,2\nu\rrbracket}\cap {S}_{\llbracket 0,2\nu\rrbracket,\llbracket 0,2\nu'\rrbracket}(X).$$
			Moreover, by definition of $\nu$ and $\nu'$,
			\begin{equation}\label{inclusion}{S}_{\llbracket 3,2\nu\rrbracket}\cap {S}_{\llbracket 0,2\nu\rrbracket,\llbracket 0,2\nu'\rrbracket}(X)\subseteq{S}_{\llbracket 1,\pi\rrbracket\setminus\left\lbrace 2\right\rbrace}\cap {S}_{\llbracket 0,\pi\rrbracket,\llbracket 0,\pi'\rrbracket}(X)\subseteq\mathcal{N}.\end{equation}
			As $\alpha_K\neq 0$, we obtain a contradiction with \eqref{hypo2}, as $f_{W_k^1}(0)\notin\mathcal{N}(f)(0)$.
			\item Else, $K:=\max\{j\in\llbracket 0,k'-1\rrbracket; \ (\alpha_{k-k'+j}, \beta_j)\neq (0,0)\}$ is well defined. As $f_0(0)=0$, $f_{B_2}(0)=0$, with 
			\begin{equation*}\begin{gathered}B_2:=[B_10^{k'-1-K},B_10^{k'-K}]\in\alpha_{k-k'+K}^2W_k^1+\beta_K^2W_{k'}^2\\+2(-1)^{\lfloor\frac{k-k'}{2}\rfloor}\alpha_{k-k'+K}\beta_KC_{k+k'-1}+\mathcal{N}+{S}_{\llbracket 3,2\nu\rrbracket}\cap {S}_{\llbracket 0,2\nu\rrbracket,\llbracket 0,2\nu'\rrbracket}(X),\end{gathered}\end{equation*} the expansion is given by Lemma \ref{bracketdev2} with $l=K$.
			Using \eqref{inclusion}, we finally obtain
			$$\alpha_{k-k'+K}^2f_{W_k^1}(0)+\beta_K^2f_{W_{k'}^2}(0)+2(-1)^{\lfloor\frac{k-k'}{2}\rfloor}\alpha_{k-k'+K}\beta_Kf_{C_{k+k'-1}}(0)\in\mathcal{N}(f)(0).$$  We use Lemma \ref{contr} to obtain a contradiction. The first point is obtained with $B=0$.
		\end{enumerate}
	\end{proof}
	\subsection{Closed-loop estimate}
	\begin{lm}\label{cloop2}
		Let $k,k',m,m'\in\nn^*$ be such that $k'\leq k$ and $\nu:=\left\lfloor\frac{\pi}{2}\right\rfloor$, $\nu':=\left\lfloor\frac{\pi'}{2}\right\rfloor$. Assume that \eqref{hypo2} holds. Then, as $(t,\left\|(u,v)\right\|_{L^1})\to0$,
		\begin{equation}\begin{gathered}\label{closed-lopp2}|\left(u_1,\cdots,u_k,v_1,\cdots,v_{k'}\right)(t)|=\mathcal{O}\left(t^{\frac 12}\left\|(u_k,v_{k'})\right\|_{L^2}+\left\|u\right\|_{L^1}^{\nu+1}\right.\\\left.+\left\|v\right\|_{L^1}^{\nu'+1}+\left|x(t;(u,v))\right|\right).\end{gathered}\end{equation}
	\end{lm}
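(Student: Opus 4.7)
The plan is to repeat the argument of Lemma \ref{cloop}, with Proposition \ref{magnus} replaced by its asymmetric counterpart Proposition \ref{magnus-assym} and Lemma \ref{libre} by Lemma \ref{libre2}.

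First, fix $i \in \llbracket 0, k-1\rrbracket$ and construct, using Lemma \ref{libre2}, a linear form $\mathcal{P}:\rr^d \to \rr$ such that $\mathcal{P}(f_{M_i^1}(0)) = 1$ and $\mathcal{P}|_{\mathcal{N}'(f)(0)} \equiv 0$, where
\[
\mathcal{N}' := \bigl(\mathcal{B}_{\llbracket 2,\nu\rrbracket} \cap \{ b \in \Br(X) : n_2(b) \leqslant \nu' \}\bigr) \cup \{ M_j^1; \ j \in \llbracket 0, k-1\rrbracket, j \neq i \} \cup \{ M_j^2; \ j \in \llbracket 0, k'-1\rrbracket \}.
\]
Such a $\mathcal{P}$ exists because the first point of Lemma \ref{libre2} provides the linear independence of the evaluated length-one brackets, while the second point ensures that their span meets trivially the subspace spanned by the $f_b(0)$'s for $b \in \mathcal{B}_{\llbracket 2, \nu\rrbracket}$ with $n_2(b) \leqslant \nu'$.

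Next, I would mimic the dominant-term extraction of Lemma \ref{dominant2} applied to the truncated ambient set $\mathcal{B}_{\llbracket 1, \nu\rrbracket} \cap \{b:n_2(b) \leqslant \nu'\}$. The brackets of this set outside $\mathcal{N}' \cup \{M_i^1\}$ are only the length-one ones $M_j^1$ with $j \geqslant k$ and $M_j^2$ with $j \geqslant k'$; estimates \eqref{estbase-s11} and \eqref{estbase-s12} with $p = 2$ and $j_0 = k$ (resp.\ $j_0 = k'$) provide \eqref{bbest1} with functional $\Xi(t,(u,v)) = t^{1/2} \|(u_k, v_{k'})\|_{L^2}$ and $\sigma = k+1$ (resp.\ $k'+1$). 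No cross-term estimate is required: for $q \geqslant 2$, the elements of $\mathcal{F}(b_1,\ldots,b_q)$ have weight $n \geqslant 2$ and, within the truncation $n_2 \leqslant \nu'$, belong to $\mathcal{N}'$, so the hypothesis of Proposition \ref{estim-crossterm} is vacuous. Applying Propositions \ref{estim-mainterm} and \ref{estim-crossterm} then combining with the asymmetric Magnus formula \eqref{x=e^Z2bis+O} for $M = \nu$, $N = \nu'$, I obtain
\[
\mathcal{P} x(t;(u,v)) = u_{i+1}(t) + \mathcal{O}\bigl( t^{1/2} \|(u_k, v_{k'})\|_{L^2} + \|u\|_{L^1}^{\nu+1} + \|v\|_{L^1}^{\nu'+1} + \|x(t;(u,v))\|^{1 + 1/\nu} \bigr),
\]
whence the desired bound on $|u_{i+1}(t)|$ after absorbing $\|x\|^{1+1/\nu}$ into $\|x\|$ for small $\|x\|$. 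A symmetric argument, using the first point of Lemma \ref{libre2} to isolate each $M_j^2$ with $j \in \llbracket 0, k'-1\rrbracket$, yields the analogous bound on $|v_{j+1}(t)|$; summing over $i \in \llbracket 0, k-1\rrbracket$ and $j \in \llbracket 0, k'-1\rrbracket$ concludes the proof.

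The main step to be careful about is the propagation of the asymmetric constraint $n_2(b) \leqslant \nu'$ throughout the argument: one has to check that the residual length-one brackets $M_j^2$ with $j \geqslant k'$ do lie in the truncated ambient set (trivially, as $n_2(M_j^2) = 1 \leqslant \nu'$), and that the precise intersection appearing in the second point of Lemma \ref{libre2} is exactly what is needed to guarantee the existence of $\mathcal{P}$. Once the asymmetric weights are tracked, the remaining bookkeeping parallels the symmetric case of Lemma \ref{cloop} verbatim.
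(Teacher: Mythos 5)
Your proposal is correct and takes the same approach as the paper, which is deliberately terse at this point (it simply says to repeat the proof of Lemma \ref{cloop} with Proposition \ref{magnus-assym} in place of Proposition \ref{magnus}). You have filled in the natural details: invoking Lemma \ref{libre2} to obtain the linear form $\mathcal{P}$, restricting $\mathcal{N}'$ to the truncated ambient set $\{b : n_2(b)\leqslant\nu'\}$, using $j_0=k$ for the $M_j^1$ remainders and $j_0=k'$ for the $M_j^2$ remainders, and observing that the cross-term hypothesis is vacuous since all $q\geqslant 2$ products land in $\mathcal{N}'$; all of these match what the paper's abbreviated argument implicitly requires.
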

	\begin{proof} We prove the estimate as Lemma \ref{cloop}. For that, we use the asymmetrical Magnus-type representation formula given by Proposition \ref{magnus-assym} with $M=\nu$, $N=\nu'$ instead of Proposition \ref{magnus}. 
	\end{proof}
	\subsection{Proof of the drift}
	\begin{proof}[Proof of Theorem \ref{theoremdriftbis2}] Let $k,k',m,m'\in\nn^*$ and $p,p'\in [1,+\infty]$ be such that  $k'\leq k$. Let $e_1:=f_{W_k^1}(0)$, $e_2:=f_{W_{k'}^2}(0)$ and $e_3:=f_{C_{k+k'-1}}(0)$. Let $\mathbb{P}$ be a linear form given by (BC). The asymmetrical Magnus expansion formula given by Proposition \ref{magnus-assym} with $M=\pi$, $N=\pi'$, the equalities \eqref{egbase-s2bad1}, \eqref{egbase-s2bad2} and \eqref{dvlplog2} give, as $(t,\left\|(u,v)\right\|_{L^1})\to0$,
		\begin{equation*}\begin{gathered}\mathbb{P}x(t;(u,v))=\int_0^t\left(\mathbb{P}(e_1)\frac{u_k^2}{2}+\mathbb{P}(e_2)\frac{v_{k'}^2}{2}\right)+\mathbb{P}(e_3)\xi_{C_{k+k'-1}}(t,(u,v))+\mathcal{O}\left(t\left\|(u_k,v_{k'})\right\|_{L^2}^2\right.\\\left.+|(u_1,\cdots,u_k,v_1,\cdots,v_{k'})(t)|^2+\left\|u\right\|^{\pi+1}_{L^1}+\left\|v\right\|^{\pi'+1}_{L^1}+\left|x(t;(u,v))\right|^{1+\frac{1}{\pi}}\right).\end{gathered}\end{equation*}
		\begin{enumerate}
			\item If $k'\leq k-2$, then, we can apply Lemma \ref{coordinates-ipp} with $j=k+k'-1$, $l=0$ and $N=k-2-\lfloor\frac{k+k'-1}{2}\rfloor$ to obtain
			$$\xi_{C_{k+k'-1}}(t,(u,v))=\sum_{\mu=0}^N(-1)^{\mu}u_{\lfloor\frac{k+k'-1}{2}\rfloor+2+\mu}(t)v_{\lfloor\frac{k+k'}{2}\rfloor-\mu}(t)+(-1)^{N+1}\int_0^tu_kv_{k'}.$$
			\item If $k'\in\{k-1,k\}$, the equation \eqref{egbase-s2good} leads to $\xi_{C_{k+k'-1}}(t,(u,v))=\displaystyle\int_0^tu_kv_{k'}$ and the writing is already convenient. In these cases, $N+1=0$.
		\end{enumerate}
		In all cases, the following equality holds
		\begin{equation}\label{ipp-xi}
			\xi_{C_{k+k'-1}}(t,(u,v))=(-1)^{N+1}\int_0^tu_kv_{k'}+\mathcal{O}\left(|(u_{1},\cdots,u_k)(t)|^2 +t\left\|v_{k'}\right\|_{L^2}^2\right).
		\end{equation}
		The fact that $2(\nu+1)\geq \pi+1$, $2(\nu'+1)\geq \pi'+1$, the closed-loop estimates \eqref{closed-lopp2} and the equality \eqref{ipp-xi}  give, as $\left\|(u,v)\right\|_{L^1}\to 0$,
		\begin{equation*}\begin{gathered}\mathbb{P}x(t;(u,v))=\int_0^t\left(\mathbb{P}(e_1)\frac{u_k^2}{2}+\mathbb{P}(e_2)\frac{v_{k'}^2}{2}+(-1)^{N+1}\mathbb{P}(e_3)u_{k}v_{k'}\right)\\+\mathcal{O}\left(t\left\|(u_k,v_{k'})\right\|_{L^2}^2+\left\|u\right\|^{\pi+1}_{L^1}+\left\|v\right\|^{\pi'+1}_{L^1}+\left|x(t;(u,v))\right|^{1+\frac{1}{\pi}}\right).\end{gathered}\end{equation*}
		Finally, we use the interpolation inequality \eqref{gn-m} to obtain
		\begin{equation*}\begin{gathered}\mathbb{P}x(t;(u,v))=\int_0^t\left(\mathbb{P}(e_1)\frac{u_k^2}{2}+\mathbb{P}(e_2)\frac{v_{k'}^2}{2}+(-1)^{N+1}\mathbb{P}(e_3)u_{k}v_{k'}\right)\\+\mathcal{O}\left(\left(t+t^{\pi-2k}\left\|u\right\|_{W^{m,p}}^{\pi-1}\right)\left\|u_k\right\|_{L^2}^2+\left(t+t^{\pi'-2k'}\left\|v\right\|_{W^{m',p'}}^{\pi'-1}\right)\left\|v_{k'}\right\|_{L^2}^2+\left|x(t;(u,v))\right|^{1+\frac{1}{\pi}}\right).\end{gathered}\end{equation*}
		Let  $\alpha:=\frac{\pi-2k}{\pi-1}$, $\alpha':=\frac{\pi'-2k'}{\pi'-1}$ and $\Delta : (u,v)\in L^1_{\mathrm{loc}}(\rr^+,\rr)^2\mapsto \displaystyle\int_0^t\left( u_k^2+v_{k'}^2\right)\in\rr^+$. We prove that the system \eqref{affine-syst} has a drift along $e_1+e_2$, parallel to $\mathcal{N}(f)(0)$ with strength $\Delta$ in regime  $\left(t,t^{\alpha}\left\|(u,v)\right\|_{W^{m,p}\times W^{m',p'}}\right)\to 0$, as before. This concludes the proof of Theorem \ref{theoremdriftbis2}.
	\end{proof}
	\subsection{Towards a quartic obstruction result}\label{quartic}
	Having dealt with the case of quadratic drifts at any order, a natural question is to ask what happens for the case of quartic drifts and more generally, for any even-order drift. We recall that this situation has been done in the case of single-input control-affine systems by Stefani and Sussmann in \cite{stefani} -- see Theorem \ref{propstef}. Here, we have to study a quartic competition between $5$ brackets, 
	\begin{equation*}\mathfrak{b}_i:=\ad_{X_2}^i\left(\ad_{X_1}^{4-i}(X_0)\right), \quad i\in\llbracket 0,4\rrbracket.\end{equation*}
	The associated functionals are the following ones:
	$\xi_{\mathfrak{b}_i}(t,(u,v))=\displaystyle\frac{1}{i!(4-i)!}\int_0^tu_1^{4-i}v_1^i$, for $i\in\llbracket 0,4\rrbracket.$
	If we choose $\mathcal{N}\subset\Br(X)$, a set of brackets and $\mathbb{P}$, a linear form satisfying $\restriction{\mathbb{P}}{\mathcal{N}(f)(0)}\equiv 0$ so that the dominant term of $\mathbb{P}\mathcal{Z}_{4}(t;f,(u,v))(0)$ are theses terms, then
	\begin{equation}\label{quartic1}\mathbb{P}\mathcal{Z}_4(t;f,(u,v))(0)\simeq\sum_{i=0}^4\frac{\mathbb{P}(e_i)}{i!(4-i)!}\int_0^tu_1^{4-i}v_1^i(s)\ds,\end{equation}
	with $e_i=f_{\mathfrak{b}_i}(0)$ for $i\in\llbracket 0,4\rrbracket$. Using Young's inequality in \eqref{quartic1}, we have
	\begin{equation}\begin{gathered}\mathbb{P}\mathcal{Z}_4(t;f,(u,v))(0)\gtrsim\left(\frac{\mathbb{P}(e_0)}{24}-\frac{\left|\mathbb{P}(e_1)\right|}{8}-\frac{\left|\mathbb{P}(e_3)\right|}{24}\right)\left\|u_1\right\|_{L^4}^4+\frac{\mathbb{P}(e_2)}{4}\int_0^tu_1^2v_1^2\\+\left(\frac{\mathbb{P}(e_4)}{24}-\frac{\left|\mathbb{P}(e_3)\right|}{8}-\frac{\left|\mathbb{P}(e_1)\right|}{24}\right)\left\|v_1\right\|_{L^4}^4,\end{gathered}\end{equation}
	Thus, the condition (BC) can be adapted in this case as: let $e_0,e_1,e_2,e_3,e_4\in\rr^d$ be five vectors and $N\subset\rr^d$ a vector subspace. We say that $e_0,e_1,e_2,e_3,e_4,N$ verify (BC) if there exists $\mathbb{P}:\rr^d\to\rr$ a linear form such that:
	\begin{equation*}3\left|\mathbb{P}(e_1)\right|+\left|\mathbb{P}(e_3)\right| < \mathbb{P}(e_0), \qquad \left|\mathbb{P}(e_1)\right|+3\left|\mathbb{P}(e_3)\right| < \mathbb{P}(e_4), \qquad
		\mathbb{P}(e_2)\geq 0, \qquad \restriction{\mathbb{P}}{N}\equiv 0.\end{equation*}
	With this condition and a good choice of $\mathcal{N}$, we can prove that $(f_1(0),f_2(0))$ is a linearly independent family. Thus, we can use the same strategy for the closed-loop estimates. Moreover, the remainder term of the Magnus-type representation formula, shaped as $\left\|(u,v)\right\|_{L^1}^M$, can be estimated by Gagliardo--Nirenberg interpolation inequalities. The major difficulty lies in extracting the dominant part of $\mathcal{Z}_4(t;f,(u,v))(0)$. This requires the ability to estimate the coordinates of the pseudo-first kind. However, this is very time-consuming: $\mathcal{B}_2$ is made up of $3$ families, $\mathcal{B}_3$ of $8$ families and $\mathcal{B}_4$ is made up of $36$ different families! Thus, this is very tedious to extend Proposition \ref{estimationcoordonnées} in the case of $\mathcal{B}_4$. This strategy would make it possible to treat systems as \eqref{ex-quartic}.
	
	\appendix
	\section{Postponed proofs}
		\subsection{Bracket condition and positive definite quadratic form}
	\begin{lm}
		Let $\alpha,\beta,\gamma\in\rr$ and $q:(a_1,a_1)\in\rr^2\mapsto\frac{1}{2}\alpha a_1^2+\frac{1}{2}\beta a_2^2+\gamma a_1a_2.$ Then, $q$ is a positive definite quadratic form  if and only if $\alpha>0$ and $\gamma^2<\alpha\beta$.
	\end{lm}
	\begin{proof}
		If $q$ is positive definite, then $q(1,0)>0$ so $\alpha>0$. Moreover, for all $a_1\in\rr$, $q(a_1,1)>0$. Thus $\Delta =\gamma^2-\alpha\beta<0$. Conversely, the result follows from the equality: for all $a_1,a_2\in\rr$,
		$$q(a_1,a_2)=\frac{1}{2}\alpha\left(\left(a_1+\frac{\gamma}{\alpha}a_2\right)^2+\frac{\alpha\beta-\gamma^2}{\alpha^2}a_2^2\right).$$
	\end{proof}
	\begin{crl}\label{bcquadratic}
		Let $e_1,e_2,e_3\in\rr^d$ be three vectors and $N\subset\rr^d$ a vector subspace. Let $\sigma:\rr^d\to\rr^d/N$ be the canonical surjection and $q:(a_1,a_2)\in\rr^2\mapsto\frac{1}{2}a_1^2e_1+\frac{1}{2}a_2^2e_2+a_1a_2e_3\in\rr^d.$
		Then, the following are equivalent
		\begin{enumerate}
			\item $e_1,e_2,e_3,N$ satisfy (BC),
			\item there exists a linear form $\mathbb{P}:\rr^d\to\rr$ such that $\restriction{ \mathbb{P}}{N}\equiv 0$ and $(a_1,a_2)\in\rr^2\mapsto \mathbb{P}(q(a_1,a_2))$ is a positive definite quadratic form,
			\item there exists a linear form $\tilde{\mathbb{P}}:\rr^d/N\to\rr$ such that $(a_1,a_2)\in\rr^2\mapsto \tilde{\mathbb{P}}(\sigma\left(q(a_1,a_2)\right))$ is a positive definite quadratic form.
		\end{enumerate}
	\end{crl}
This corollary establishes the connection between Theorem \ref{theoremdriftbis} and the necessary conditions presented in \cite{refId0, lewis} -- see Sections \ref{stateofartrefId0} and \ref{stateofartlewis}.
	\begin{rmq}
		Let $k\in\nn^*$. We fix $\mathfrak{b}_1=W_k^1$,  $\mathfrak{b}_2=W_k^2$ and  $\mathfrak{b}_3=C_{2k-1}$. With the linear form $\mathbb{P}:\rr^d\to\rr$ given by (BC), the quantity $\Delta$ introduced in \eqref{heuristique-p} is a positive definite quadratic form (the expressions of $\xi_{\mathfrak{b}_i}$ are given in \eqref{egbase-s2bad1}, \eqref{egbase-s2bad2}, \eqref{egbase-s2good}).
	\end{rmq}
	\subsection{Geometric conditions}
	\begin{prop}\label{bclink}
		Let $e_1,e_2,e_3\in\rr^d$ be three vectors and $N\subset\rr^d$ a vector subspace.  Let $\sigma:\rr^d\to\rr^d/N$ be the canonical surjection and $\tilde{e}_i:=\sigma(e_i)$ for $i\in\llbracket 1,3\rrbracket$. Then,
		$e_1$, $e_2$, $e_3$, $N$  \textbf{do not} satisfy (BC) if and only if one of the following conditions is satisfied
		\begin{enumerate}
			\item[\textbullet] $\tilde{e}_1=0$ or $\tilde{e}_2=0$,
			\item[\textbullet] $(\tilde{e}_1,\tilde{e}_2)$ is a linearly independent family and $\tilde{e}_3=a\tilde{e}_1+b\tilde{e}_2$ with $ab\geq \frac{1}{4}$,
			\item[\textbullet] $\tilde{e}_2=\beta\tilde{e}_1$ with $\beta<0$,
			\item[\textbullet] $\tilde{e}_2=\beta \tilde{e}_1$, $\tilde{e}_3=\gamma \tilde{e}_1$ with $\beta\leq\gamma^2$ and $\beta\neq 0$.
		\end{enumerate}
	\end{prop}
	\begin{proof}[Proof of Proposition \ref{bclink}]
		Assume that (BC) is not satisfied and that the points $1$, $3$ and $4$ are not verified. The purpose is to show that the second one is. Then, one of the three following possibilities holds
		\begin{enumerate}
			\item[$a.$] $(\tilde{e}_1,\tilde{e}_2)$ is a linearly independent family,
			\item[$b.$] $\tilde{e}_2=\beta\tilde{e}_1$ with $\beta >0$ and $(\tilde{e}_1,\tilde{e}_3)$ is a linearly independent family, 
			\item[$c.$] $\tilde{e}_1\neq 0$, $\tilde{e}_2=\beta\tilde{e}_1$ and $\tilde{e}_3=\gamma\tilde{e}_1$ with $\gamma^2<\beta$.
		\end{enumerate}
		If the point $b.$ holds, then $\spn(e_1)\oplus\spn(e_3)\oplus N$. In this situation, we can define $\mathbb{P}$ as
		$$\mathbb{P}(e_1)=1, \qquad \mathbb{P}(e_3)=0, \qquad \restriction{\mathbb{P}}{N}\equiv0.$$
		Then, $\mathbb{P}$ satisfies (BC). This is a contradiction.	If the point $c.$ holds, then $\spn(e_1)\oplus N$. Thus, we can define $\mathbb{P}$ as
		$$\mathbb{P}(e_1)=1, \qquad \restriction{\mathbb{P}}{N}\equiv0.$$
		Then, $\mathbb{P}(e_3)^2-\mathbb{P}(e_1)\mathbb{P}(e_2)=\gamma^2-\beta<0$ and $\mathbb{P}$ satisfies (BC), this is a contradiction. Necessarily, $a.$ holds, \textit{i.e.}\ $\mathbf{(\tilde{e}_1,\tilde{e}_2)}$ \textbf{is a linearly independent family}. If $\dim\left(\spn(\tilde{e}_1,\tilde{e}_2,\tilde{e}_3)\right)=3$, then $\spn(e_1)\oplus\spn(e_2)\oplus\spn(e_3)\oplus N$. Thus, we can define $\mathbb{P}$ as
		$$\mathbb{P}(e_1)=1, \qquad \mathbb{P}(e_2)=1, \qquad \mathbb{P}(e_3)=0, \qquad \restriction{\mathbb{P}}{N}\equiv0.$$
		Once again, $\mathbb{P}$ satisfies (BC), this is a contradiction. Consequently, \textbf{there exist} $\mathbf{a,b\in\rr}$ \textbf{such that} $\mathbf{\tilde{e}_3=a\tilde{e}_1+b\tilde{e}_2}$. Finally, assume that $ab<\frac 14$. 
		\begin{enumerate}
			\item If $a=0$, then, we can define $\mathbb{P}$ as
			$$\mathbb{P}(e_1)=b^2+1, \qquad \mathbb{P}(e_2)=1,\qquad \restriction{\mathbb{P}}{N}\equiv0.$$
			Then, $\mathbb{P}$ satisfies (BC).
			\item Otherwise, $Q:=a^2x^2+(2ab-1)x+b^2$ verifies $\Delta=1-4ab>0$. Then $x^*:=\frac{1-2ab}{2a^2}>0$ satisfies $Q(x^*)<0$. We can then define $\mathbb{P}$ as
			$$\mathbb{P}(e_1)=x^*, \qquad \mathbb{P}(e_2)=1,\qquad \restriction{\mathbb{P}}{N}\equiv0.$$
			Then, $\mathbb{P}$ satisfies (BC).
		\end{enumerate}
		This is a contradiction. Consequently, $\mathbf{ab\geq\frac 14}$. This is the desired property.
		\bigskip\\
		Conversely, we reason by contraposition. Assume that (BC) holds and let $\mathbb{P}$ be such a linear form.
		\begin{enumerate}
			\item If the point $1$ is satisfied, $e_1\in N$ and $\mathbb{P}(e_1)=0$ or  $e_2\in N$ and $\mathbb{P}(e_2)=0$. 
			\item If the point $2$ holds, $a\neq 0$ and
			$$\mathbb{P}(e_3)^2-\mathbb{P}(e_1)\mathbb{P}(e_2)=a^2\left(\mathbb{P}(e_1)+\left(\frac ba-\frac{1}{2a^2}\right)\mathbb{P}(e_2)\right)^2+\frac{4ab-1}{4a^2}\mathbb{P}(e_2)^2\geq 0.$$
			\item If the point $3$ is satisfied, $\mathbb{P}(e_1)\mathbb{P}(e_2)=\beta\mathbb{P}(e_1)^2\leq 0$.
			\item If the point $4$ holds, $\mathbb{P}(e_3)^2-\mathbb{P}(e_1)\mathbb{P}(e_2)=(\gamma^2-\beta)\mathbb{P}(e_1)^2\geq 0$.
		\end{enumerate}
		All these points are in contradiction with (BC).
	\end{proof}
	\subsection{A bracket expansion in the symmetrical case}\label{appendicebracket}
	\begin{rmq}
		The space $\mathcal{A}(X)$ -- see Definition \ref{def:free-algebra} -- can be seen as a graded algebra
		$\mathcal{A}(X)=\bigoplus_{n\in\nn}\mathcal{A}_n(X),$ where $\mathcal{A}_n(X)$ is the finite-dimensional $\rr$-vector space spanned by monomials of degree $n$ over $X$. The space $\mathcal{L}(X)$ -- see Definition \ref{def:free-lie-algebra} -- is a graded Lie algebra
		$$\mathcal{L}(X)=\bigoplus_{n\in\nn}\mathcal{L}_n(X), \qquad\qquad [\mathcal{L}_n(X),\mathcal{L}_m(X)]\subset\mathcal{L}_{n+m}(X),$$
		where we define $\mathcal{L}_n(X) := \mathcal{L}(X)\cap\mathcal{A}_n(X)$, for any integer $n\in\nn$.
	\end{rmq}
	\begin{proof}
		[Proof of Lemma \ref{bracketdev}]
		Let $l\in\llbracket 0,k-1\rrbracket$, $(\alpha_{j,1})_{j\in\llbracket 0,l\rrbracket}$, $(\alpha_{j,2})_{j\in\llbracket 0,l\rrbracket}\in\rr^{l+1}$. To prove the desired relation, we compute in the quotient space $$\mathcal{L}(X)/\spn\{\ev(b); \  b\in\Br(X), \ n(b)=2, \ n_0(b)<2k-1\}.$$ We note $\bar{a}$ the class of $a\in\mathcal{L}(X)$ in this quotient. Expending the bracket, we have
		$$[B0^{k-l-1},B0^{k-l}]=\sum_{\substack{i,i'\in\{1,2\}\\j,j'\in\llbracket 0,l\rrbracket}}\alpha_{j,i}\alpha_{j',i'}[M_{j+k-l-1}^i,M_{j'+k-l}^{i'}].$$
		We note that, for all $i,i'\in\{1,2\}$,
		\begin{equation*}\forall j,j'\in\llbracket 0,l\rrbracket\text{ such that } j+j'<2l, \quad n_0\left(\left[M_{j+k-l-1}^i,M_{j'+k-l}^{i'}\right]\right)<2k-1.\end{equation*} 
		Using this remark, 
		\begin{equation*}\overline{[B0^{k-l-1},B0^{k-l}]}=\sum_{i,i'\in\{1,2\}}\alpha_{l,i}\alpha_{l,i'}\overline{[M_{k-1}^i,M_{k}^{i'}]}.
		\end{equation*}Finally, using Jacobi's inequality \eqref{jacobibase},
		\begin{equation*}\overline{[B0^{k-l-1},B0^{k-l}]}=\alpha_{l,1}^2\overline{W_k^1}+\alpha_{l,2}^2\overline{W_k^2}+\alpha_{l,1}\alpha_{l,2}\left(2\overline{C_{2k-1}}+\overline{C_{2k-2,1}}\right).\end{equation*}
		As $\mathcal{L}(X)$ is a graded Lie algebra, $\mathcal{B}_{2,2k-2}:=\{\ev(b); \ b\in\Br(X), \ n(b)=2,\ n_0(b)<2k-1\}$ generates all the elements $\ev(b)$ with $n(b)=2$ and $n_0(b)<2k-1$. The elements of $\mathcal{B}_{2,2k-2}$ are in $\mathcal{N}_k^m$. Finally, as $C_{2k-2,1}\in\mathcal{N}_k^m$, the desired result follows.
	\end{proof}
	\subsection{A bracket expansion in the asymmetrical case}\label{appendicebracket2}
	The purpose of this subsection is to prove the expansion of Lemma \ref{bracketdev2}.
	The proof of this lemma is quite different from the case $k=k'$ studied in Lemma \ref{bracketdev} and is based on the following lemma.
	\begin{lm}\label{expansion} The following expansions hold.
		\begin{enumerate}
			\item For any $\nu\in\nn$ and $a,b\in\mathcal{L}(X)$, 
			\begin{equation}\label{jacobi}
				[a,b0^{\nu}]=\sum_{\nu'=0}^{\nu}\binom{\nu}{\nu'}(-1)^{\nu'}[a0^{\nu'},b]0^{\nu-\nu'}.
			\end{equation}
			\item For any $\nu\in\nn^*$, there exist coefficients $\alpha_r^{\nu}\in\zz$ for $1\leq 2r+1\leq \nu$ such that, for every $b\in\mathcal{L}(X)$,
			\begin{equation}\label{decompuu}
				[b,b0^{\nu}]=\sum_{1\leq 2r+1\leq\nu}\alpha_r^{\nu}[b0^r,b0^{r+1}]0^{\nu-2r-1}.
			\end{equation}
			\item For any $\nu\in\nn$, there exist coefficients $\beta_r^{\nu}\in\zz$ for $0\leq r\leq \nu$ such that, for every $p\in\nn$,
			\begin{equation}\label{decompuv}
				[M_p^1,M_p^20^{\nu}]=\sum_{r=0}^{\nu}\beta_r^{\nu}C_{2p+r,\nu-r}.
			\end{equation}
			\item For any $\nu\in\nn$, there exist coefficients $\gamma^{\nu}_r\in\zz$ for $0\leq r\leq \nu$ such that, for every $p\in\nn$,
			\begin{equation}\label{decompvu}
				[M_p^2,M_p^10^{\nu}]=\sum_{r=0}^{\nu}\gamma_r^{\nu}C_{2p+r,\nu-r}.
			\end{equation}
			Moreover, $\gamma^{\nu}_{\nu}=(-1)^{1+\lfloor\frac{\nu+1}{2}\rfloor}$.
		\end{enumerate}
	\end{lm}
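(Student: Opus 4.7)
The plan is to prove each of the four identities by induction, with the Jacobi identity \eqref{jacobibase} as the main workhorse. The identities are of increasing specificity: \eqref{jacobi} is the general framework, \eqref{decompuu} is a symmetric-input specialization, and \eqref{decompuv}, \eqref{decompvu} are the concrete computations needed in Lemma \ref{bracketdev2}.

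For identity \eqref{jacobi}, I would induct on $\nu$. The base case $\nu=0$ is trivial. For the step, write $b0^{\nu+1}=[b0^\nu,X_0]$ and apply Jacobi to obtain $[a,[b0^\nu,X_0]]=[a,b0^\nu]0-[a0,b0^\nu]$. Plugging in the induction hypothesis on both terms and reindexing yields the desired expansion for $\nu+1$, with Pascal's rule $\binom{\nu}{\nu'}+\binom{\nu}{\nu'-1}=\binom{\nu+1}{\nu'}$ producing the binomial coefficients.

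For identity \eqref{decompuu}, I would prove the following sharper statement by induction on $b-a\geqslant 0$: for every $b\in\mathcal{L}(X)$ and $a\leqslant b$, the bracket $[b0^a,b0^b]$ lies in the integer span of $\{[b0^r,b0^{r+1}]0^{a+b-2r-1}\}_{2r+1\leqslant a+b}$. The cases $b-a\in\{0,1\}$ are immediate (the former gives zero by antisymmetry). For $b-a\geqslant 2$, Jacobi gives $[b0^a,b0^b]=[b0^a,b0^{b-1}]0-[b0^{a+1},b0^{b-1}]$, and both right-hand brackets have strictly smaller $b-a$, so the induction hypothesis applies and integrality is preserved.

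For identities \eqref{decompuv} and \eqref{decompvu}, the substitution Lie-algebra homomorphism $\varphi:\mathcal{L}(X_0,Y_1,Y_2)\to\mathcal{L}(X)$ sending $Y_i\mapsto M_p^i$ and $X_0\mapsto X_0$ reduces the statement to the special case $p=0$: indeed, the image under $\varphi$ of the Hall element built analogously from $Y_1,Y_2,X_0$ equals $C_{2p+r,l}$, as one checks directly by separating the parity of $r$. For $p=0$, I would induct on $\nu$: using $X_20^\nu=[X_20^{\nu-1},X_0]$ and Jacobi gives $[X_1,X_20^\nu]=[X_1,X_20^{\nu-1}]0-[M_1^1,X_20^{\nu-1}]$. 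The first term is handled by the induction hypothesis. For the second term, I would apply identity \eqref{jacobi} with $a=M_1^1$, $b=X_2$ to reduce it to a sum of elementary brackets $[M_{1+k}^1,X_2]$, each of which is decomposed in the Hall basis by another short Jacobi expansion. The integrality of the $\beta_r^\nu,\gamma_r^\nu$ follows from integer operations at every step. The main obstacle is bookkeeping the signs for the leading coefficient $\gamma_\nu^\nu=(-1)^{1+\lfloor(\nu+1)/2\rfloor}$: this term comes from fully telescoping the recursion $[M_p^2,M_{p+\nu}^1]=[M_p^2,M_{p+\nu-1}^1]0-[M_{p+1}^2,M_{p+\nu-1}^1]$ until the two indices balance out, after $\lfloor\nu/2\rfloor$ or $\lfloor(\nu-1)/2\rfloor$ steps depending on parity, producing $\pm C_{2p+\nu,0}$ from either $[M_q^2,M_q^1]=-C_{2q,0}$ or $[M_q^2,M_{q+1}^1]=C_{2q+1,0}$; the final sign is the product of the $(-1)^k$ from the alternating recursion and the convention $(-1)^j$ built into the definition of $C_{j,l}$.
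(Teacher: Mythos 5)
Your treatment of points 1 and 2 is plausible and in the same spirit as the cited reference. For points 3 and 4, the core idea --- a Jacobi recursion feeding an induction on $\nu$ --- is right, and your final telescoping argument for $\gamma_\nu^\nu$ is exactly the paper's mechanism. However, the middle of your proposal contains a genuine gap and some detours that should be removed. The substitution-homomorphism reduction to $p=0$ buys you nothing, because the Jacobi step $[X_1,X_20^\nu]=[X_1,X_20^{\nu-1}]0-[M_1^1,X_20^{\nu-1}]$ immediately takes you outside the $p=0$ case. The key observation you are missing is that this second term is still of the target form: $[M_1^1,X_20^{\nu-1}]=[M_1^1,M_1^20^{\nu-2}]$, so the Jacobi step sends $(p,\nu)$ to $(p,\nu-1)$ and $(p+1,\nu-2)$, and a strong two-step induction on $\nu$ (with $p$ arbitrary, verifying $\nu=0,1$ by hand) closes it. This is the paper's proof, and it is also precisely the recurrence you invoke for the sign computation --- so your argument already contains the right recursion, just not where you use it.

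The detour of expanding $[M_1^1,X_20^{\nu-1}]$ via identity \eqref{jacobi} and then treating each $[M_{1+k}^1,X_2]$ by ``another short Jacobi expansion'' is the gap. The bracket $[M_{1+k}^1,X_2]=-[M_0^2,M_0^10^{1+k}]$ is an instance of identity~4 at $p=0$, $\nu'=1+k$; for $k$ up to $\nu-1$ this is a problem of the same size as the one you are trying to prove, so it is not a short computation, and you would need to set up a careful mutual recursion between points 3 and 4 to make this step legitimate --- which you have not done, and which the sign bookkeeping through the binomial coefficients would make painful. Dropping this detour in favour of the two-step induction above makes the proof complete, makes integrality of the coefficients immediate (each step produces integer combinations), and makes the argument identical to the paper's, whose induction hypothesis is the validity of \eqref{decompvu} at ranks $\nu$ and $\nu+1$ for all $p$, proving rank $\nu+2$.
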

	\begin{proof}
		The first two points are proved in \cite[Lemma $4.11$]{beauchard2024unified}. We prove the last point by induction on $\nu$ (the proof of $3.$ is very similar): the equality is true for $\nu=0$ with $\gamma_{0}^0=-1$. 
		The equality if true for $\nu=1$ with $\gamma_0^1=0$ and $\gamma_1^1=1$.
		We assume that the formula holds for $\nu,\nu+1$, with $\nu\geq 0$. Then, for every $p\in\nn$, 
		$$[M_p^2,M_p^10^{\nu+2}]=[M_p^2,M_p^10^{\nu+1}]0-[M_{p+1}^2,M_{p+1}^10^{\nu}],$$ thanks to the Jacobi's equality \eqref{jacobibase}.
		Using the induction hypothesis and a change of variable, we get
		$$[M_p^2,M_p^10^{\nu+2}]=\sum_{r=0}^{\nu+1}\gamma_r^{\nu+1}C_{2p+r,(\nu+2)-r}-\sum_{r=2}^{\nu+2} \gamma_{r-2}^{\nu}C_{2p+r,(\nu+2)-r}.$$ 
		Thus,
		$[M_p^2,M_p^10^{\nu+2}]=\displaystyle\sum_{r=0}^{\nu+2}\gamma_r^{\nu+2}C_{2p+r,\nu+2-r},$
		with 
		$$\forall r\in\llbracket 2, \nu+1\rrbracket, \quad \gamma_r^{\nu+2}=\gamma_r^{\nu+1}-\gamma_{r-2}^{\nu}, \qquad \gamma_{\nu+2}^{\nu+2}=-\gamma_{\nu}^{\nu}, \qquad \gamma_0^{\nu+2}=\gamma_0^{\nu+1}, \qquad \gamma_1^{\nu+2}=\gamma_1^{\nu+1}.$$ 
		We obtain the desired equality, as $\gamma_{\nu+2}^{\nu+2}=-\gamma_{\nu}^{\nu}=-(-1)^{1+\lfloor\frac{\nu+1}{2}\rfloor}=(-1)^{1+\lfloor\frac{(\nu+2)+1}{2}\rfloor}.$
	\end{proof}
	We are now in a position to prove Lemma \ref{bracketdev2}.
	\begin{proof}[Proof of Lemma \ref{bracketdev2}] By definition, \begin{equation}\label{decomp}\left[B0^{k'-l-1},B0^{k'-l}\right]=(\text{I})+(\text{II})+(\text{III})+(\text{IV}),
		\end{equation}
		with $$\begin{array}{cc}\text{(I)}=\displaystyle\sum_{i,j=0}^{l+k-k'}\alpha_i\alpha_j[M_{i+k'-l-1}^1,M_{j+k'-l}^1], & \text{(II)}=\displaystyle\sum_{i=0}^{l+k-k'}\sum_{j=0}^{l}\alpha_i\beta_j[M_{i+k'-l-1}^1,M_{j+k'-l}^2],\\
			\text{(III)}=\displaystyle\sum_{i=0}^l\sum_{j=0}^{l+k-k'}\beta_i\alpha_j[M_{i+k'-l-1}^2,M_{j+k'-l}^1],& \text{(IV)}=\displaystyle\sum_{i,j=0}^l\beta_i\beta_j[M_{i+k'-l-1}^2,M_{j+k'-l}^2].\end{array}$$
		Then, 
		\begin{equation}\label{decp}(\text{I})=\sum_{i=k'-l}^{k}\alpha_{i+l-k'}^2W^1_i+\left(\sum_{i=2}^{l+k-k'}\sum_{j=0}^{i-2}+\sum_{i=0}^{l+k-k'-1}\sum_{j=i+1}^{l+k-k'}\right)\alpha_i\alpha_j[M_{i+k'-l-1}^1,M_{j+k'-l}^1],
		\end{equation}
		as the bracket is zero if $j=i-1$. Moreover, the equation \eqref{decompuu}, applied with $b=M_{j+k'-l}^1$ and $\nu=i-j-1\geq 1$ gives: for all $2\leq i\leq l+k-k'$, $ 0\leq j\leq i-2$,
		$$[M_{i+k'-l-1}^1,M_{j+k'-l}^1]=-\sum_{1\leq 2r+1\leq i-j-1}\alpha_r^{i-j-1}W^1_{j+k'-l+r+1,i-j-2r-2}.$$
		As $j+k'-l+r+1\leq k-1$, we obtain
		\begin{equation}\label{dec2p}\text{for all }2\leq i\leq l+k-k', \  0\leq j\leq i-2,\quad [M_{i+k'-l-1}^1,M_{j+k'-l}^1]\in\mathcal{N}_{k,k'}^{m,m'}.\end{equation}
		Similarly, we obtain:
		\begin{equation}\label{dec3p}\text{for all } 0\leq i\leq l+k-k'-1, \  i+1\leq j\leq l+k-k',\quad [M_{i+k'-l-1}^1,M_{j+k'-l}^1]\in\mathcal{N}_{k,k'}^{m,m'}.\end{equation}
		Thus, the equations \eqref{decp}, \eqref{dec2p} and \eqref{dec3p} give
		\begin{equation}\label{dec4p}\text{(I)}-\alpha_{l+k-k'}^2W_k^1\in\mathcal{N}_{k,k'}^{m,m'}.\end{equation}
		We can manipulate the term (IV) in the same way to obtain
		\begin{equation}\label{dec5p}\text{(IV)}-\beta_l^2W_{k'}^2\in\mathcal{N}_{k,k'}^{m,m'}.\end{equation}
		Finally, we need to examine the cross terms (II) and (III).
		\begin{equation}\label{dec6p}(\text{II})=\alpha_{l+k-k'}\beta_l[M^1_{k-1},M^2_{k'}]+\sum_{\substack{(i,j)\in\llbracket 0,l+k-k'\rrbracket\times\llbracket 0,l\rrbracket\\(i,j)\neq (l+k-k',l)}}\alpha_i\beta_j[M_{i+k'-l-1}^1,M_{j+k'-l}^2].
		\end{equation}
		Assume temporarily that $k\neq k'$. The equation \eqref{decompvu}, applied with $p=k'$ and $\nu=k-k'-1\geq 0$ gives
		$$[M_{k-1}^1,M_{k'}^2] =(-1)^{\lfloor\frac{k-k'}{2}\rfloor}C_{k+k'-1}-\sum_{r=0}^{k-k'-2}\gamma_r^{k-k'-1}C_{2k'+r,k-k'-1-r}.$$
		As, in the sum, $2k'+r\leq k+k'-2$, we obtain
		\begin{equation}\label{dec7p}[M_{k-1}^1,M_{k'}^2] -(-1)^{\lfloor\frac{k-k'}{2}\rfloor}C_{k+k'-1}\in\mathcal{N}_{k,k'}^{m,m'}.\end{equation}
		Using the Jacobi's formula \eqref{jacobibase}, this equality is also true when $k=k'$. We expand on the basis the second term of the right-hand side of \eqref{dec6p}. We split the space of subscripts as
		$$\llbracket 0,l+k-k'\rrbracket\times\llbracket 0,l\rrbracket\setminus\{(l+k-k',l)\}=A\sqcup B\sqcup C\sqcup D\sqcup E\sqcup F,$$
		with
		$$A=\{(i,j); \ 1\leq i\leq l-1, \ 0\leq j\leq i-1\}, \quad B=\{(i,j); \ 0\leq i\leq l-1,\  i\leq j\leq l-1\},$$
		$$C=\{(i,j); \ l\leq i\leq l+k-k', \ 0\leq j\leq l-1\}, \quad D=\llbracket 0,l-1\rrbracket\times\{l\},$$$$ E=\llbracket l+1,l+k-k'-1\rrbracket\times\{l\}, \quad F=\{(l,l)\}.$$
		Note that the spaces $E$ and $F$ are empty if $k=k'$. For all $(i,j)\in A$, we can apply \eqref{decompvu} with $p=j+k'-l$ and $\nu=i-j-1$ to have
		$$[M^1_{i+k'-l-1},M^2_{j+k'-l}]=-\sum_{r=0}^{i-j-1}\gamma_r^{i-j-1}C_{2(j+k'-l)+r,i-j-1-r}.$$
		As $2(j+k'-l)+r\leq k+k'-4$, we have
		$$\forall (i,j)\in A, \quad [M^1_{i+k'-l-1},M^2_{j+k'-l}]\in\mathcal{N}_{k,k'}^{m,m'}.$$
		We can do the same manipulations for $(i,j)\in B$, $C$, $D$, $E$ and $F$, thanks to the equations \eqref{decompuv} and \eqref{decompvu}. Using, \eqref{dec7p} in \eqref{dec6p}, we finally get
		\begin{equation}\label{dec9p}\text{(II)}-(-1)^{\lfloor\frac{k-k'}{2}\rfloor}\alpha_{l+k-k'}\beta_lC_{k+k'-1}\in\mathcal{N}_{k,k'}^{m,m'}.\end{equation}
		With the same manipulations, we have
		\begin{equation}\label{dec10p}\text{(III)}-(-1)^{\lfloor\frac{k-k'}{2}\rfloor}\alpha_{l+k-k'}\beta_lC_{k+k'-1}\in\mathcal{N}_{k,k'}^{m,m'}.\end{equation}
		The equations \eqref{decomp}, \eqref{dec4p}, \eqref{dec5p},  \eqref{dec9p},  and \eqref{dec10p} lead to the conclusion.
	\end{proof}
	\subsection{Relation between Sussmann's $\mathcal{S}(\theta)$-condition and Theorem \ref{theoremdriftbis}}\label{append-linkstheta}
	Let $k\in\nn^*$, $m=1$. We assume that 
	$$f_{W_k^1}(0), \quad f_{W_k^2}(0), \quad f_{C_{2k-1}}(0), \quad \mathcal{N}_k^1(f)(0)\quad \text{satisfy (BC)}.$$
	Let us show that, for every $\theta\in[0,1]$, \eqref{Hyp_Sussm} is not verified for $\mathfrak{b}:=W_k^1$. We assume the opposite: there exists $\theta\in[0,1]$ such that \eqref{Hyp_Sussm} holds for $\mathfrak{b}=W_k^1$. Then, $n_0(\mathfrak{b})=2k-1$ is odd, $n_1(\mathfrak{b})=2$, $n_2(\mathfrak{b})=0$ are even and
	$f_{\sigma(\mathfrak{b})}(0)=f_{W_k^1}(0)+f_{W_k^2}(0).$
	Let $b\in\Br(X)$ be such $n(b)+\theta n_0(b)<n(\mathfrak{b})+\theta n_0(\mathfrak{b})=2+(2k-1)\theta$. Then, 
	$$n(b)< 2+(2k-1)\theta\leq 2k+1.$$
	Moreover, if $n(b)=2$, then
	$$2+\theta n_0(b)<2+\theta(2k-1)\qquad \text{ so }\qquad n_0(b)<2k-1.$$
	Let $\mathcal{E}:={S}_{\llbracket 1,\pi(k,1)\rrbracket\setminus\{2\}}(X)\cup\{b\in\Br(X); \ n(b)=2, \ n_0(b)<2k-1\}$.
	As $\pi(k,1)=2k+1$, the previous inequalities lead to 
	$f_{\sigma(\mathfrak{b})}(0)\in\mathcal{E}(f)(0)$. As $\mathcal{E}(f)(0)\subset\mathcal{N}_k^1(f)(0)$, we obtain
	$$\mathbb{P}\left(f_{W_k^1}(0)\right)+\mathbb{P}\left(f_{W_k^2}(0)\right)=0,$$
	where $\mathbb{P}$ is a linear form given by (BC). Thus, 
	$$0\leq \mathbb{P}\left(f_{C_{2k-1}}(0)\right)^2<\mathbb{P}\left(f_{W_k^1}(0)\right)\mathbb{P}\left(f_{W_k^2}(0)\right)=-\mathbb{P}\left(f_{W_k^1}(0)\right)^2\leq 0.$$
	This is a contradiction. Consequently, \eqref{Hyp_Sussm} is not verified for $\mathfrak{b}=W_k^1$.
	\section*{Acknowledgement}
	
	The author would like to take particular care in thanking Karine Beauchard and Frédéric Marbach for the many discussions that brought this article to life.
	
	The author acknowledges support from grants ANR-20-CE40-0009 and ANR-11-LABX-0020, as well as from the Fondation Simone et Cino Del Duca – Institut de France.
\printbibliography
\end{document}